\def\ep{\varepsilon}
\def\al{\alpha}
\def\om{\omega}
\def\omb{\overline{\omega}}
\def\Om{\Omega}
\def\loc{_{{\rm loc}}}
\def\N{\mathbb{N}}
\def\H{\mathscr{H}^{-1}(\Om)}
\def\bm{\mathcal{BM}(\Om)}
\def\bmp{\mathcal{BM}_+(\Om)}
\def\bmpb{\mathcal{BM}_+(\overline\Om)}
\def\bmb{\mathcal{BM}(\overline\Om)}
\def\L{L^2(\Om)+\xspace}
\def\X{\mathbf{X}}
\def\B{\mathbf{B}}
\def\Y{\mathbf{Y}}
\def\w{\mathbf{w}}
\def\tu{\widetilde{u}}
\def\dt{\frac{\text{d}}{\text{d}t}}
\def\xspace{\mathscr{X}}
\def\yinf{\mathscr{Y}^\infty}
\def\yinfb{\overline{\mathscr{Y}}^\infty}
\def\bcs{C^{\sigma,\infty}_{c,{\rm tan}}(\overline{\Omega})}
\newtheorem{theorem}{Theorem}[section]
\newtheorem{lemma}[theorem]{Lemma}
\newtheorem{definition}[theorem]{Definition}
\newtheorem{proposition}[theorem]{Proposition}
\newtheorem{corollary}[theorem]{Corollary}
\newtheorem{remark}[theorem]{Remark}
\numberwithin{equation}{section} 
\newcommand{\real}{\mathbb{R}}
\newcommand{\be}{\begin{equation}}
\newcommand{\ee}{\end{equation}}
\renewcommand{\leq}{\leqslant}
\renewcommand{\geq}{\geqslant}
\newcommand{\til}{\widetilde}
\newcommand\cO{\mathscr{O}}
\newcommand{\ds}{\displaystyle}
\def\R{\mathbb{R}}
\def\d{\,{\rm d}}
\def\bX{{\mathbf X}}
\DeclareMathOperator\curl{curl}
\DeclareMathOperator\dive{div}
\DeclareMathOperator\supp{supp}
\begin{document} 
\title[Weak vorticity formulation in domains with boundary]{Weak vorticity formulation for the incompressible 2D Euler equations in domains with boundary} 
\author[Iftimie, Lopes Filho, Nussenzveig Lopes and Sueur]{{D. Iftimie,}
{M. C. Lopes Filho,}  
{H. J. Nussenzveig Lopes and}
{F. Sueur}}

\begin{abstract}
	In this article we examine the interaction of incompressible 2D flows with compact material boundaries. Our focus is the dynamic behavior of the
circulation of velocity around boundary components and the possible exchange between flow vorticity and boundary circulation in flows with vortex sheet
initial data. We formulate our results for flows outside a finite number of smooth obstacles. Our point of departure is the observation that ideal flows 
with vortex sheet regularity have well-defined circulations around connected components of the boundary. In addition, we show that the velocity can be uniquely 
reconstructed from the vorticity and boundary component circulations, which allows to recast 2D Euler evolution using vorticity and the circulations as dynamic 
variables. The weak form of this vortex dynamics formulation of the equations is called the {\em weak vorticity formulation}. Our first result is existence 
of a solution for the weak velocity formulation with vortex sheet initial data for flow outside a finite number of smooth obstacles. The proof is a straightforward adaptation of Delort's original existence result and requires the usual sign condition. The main result in this article is the equivalence between the weak velocity and weak vorticity formulations, without sign assumptions. Next, we focus on weak solutions obtained by mollifying initial data and passing to the limit, with the portion of vorticity singular with respect to the Lebesgue measure assumed to be nonnegative. For these solutions we prove that the circulations around each boundary component cannot be smaller than the initial data circulation, so that nonnegative vorticity may be absorbed by the boundary, but not produced by the boundary. In addition, we prove that if the weak solution conserves circulation at the boundary components it is a {\it boundary coupled weak solution}, a stronger version of the weak vorticity formulation. We prove existence of a weak solution which conserves circulation at the boundary components if the initial vorticity is integrable, i.e. if the singular part vanishes. In addition, we discuss the definition of the mechanical force which the flow exerts on material boundary components and its relation with conservation of circulation. Finally, we describe the corresponding results for a bounded domain with holes, and the adaptations required in the proofs.      
\end{abstract}

\maketitle
\tableofcontents

\section{Introduction}

For two-dimensional incompressible fluid flow, a vortex sheet is a curve where the velocity 
of the fluid has a tangential discontinuity. Vortex sheets are an idealized model of a thin region 
where the fluid is subjected to intense, strongly localized shear. Flows with vortex sheets are 
of critical physical interest in fluid mechanics for several reasons, specially because such flows 
are common in situations of practical interest, such as in the wake of an airfoil. Thus, the 
mathematical description of vortex sheet motion is a classical topic in fluid dynamics.

In 1990,  Delort proved global-in-time existence of weak solutions for the incompressible 
Euler equations having, as initial vorticity, a compactly supported, bounded Radon measure with distinguished 
sign in $H^{-1}$, plus an arbitrary, compactly supported, $L^p$ function, with $p>1$, see \cite{Delort91}. 
This includes a large class of examples in which the initial vorticity 
is actually supported on a curve (classical vortex sheets). This result was later extended to certain 
symmetric configurations of vorticity with sign change, see \cite{LNX01,LNX06}. Very little is known regarding 
Delort's weak solutions beyond their existence. Some interesting open questions are the conservation of 
kinetic energy, conservation of the total variation of vorticity, and the behavior of the support of vorticity. 

The original work by Delort included flows in the full plane, in bounded domains and in compact manifolds without 
boundary. The proof was based on a compensated-compactness argument for certain quadratic expressions in the components
of velocity in order to pass to the limit in the weak formulation of the momentum equations along an approximate 
solution sequence obtained by mollifying initial data and exactly solving the equations.

There is a large literature directly associated with Delort's Theorem. The convergence to a weak solution was extended
to approximations obtained by vanishing viscosity, see \cite{majda}, numerical approximations, see \cite{LiuXin,Schochet2}
and Euler-$\alpha$, see \cite{BLT10}. The initial data class was extended to the limiting case $p=1$, see \cite{SemEDPEcolePolytechniques,VecchiWu93},
and an alternative proof using harmonic analysis was produced, see \cite{EvansMuller}. In 1995, S. Schochet presented a simplified
proof of the full-plane case by introducing the {\it weak vorticity formulation} of the Euler equations, where the compensated compactness
argument at the heart of the original result becomes an elementary algebraic trick, see \cite{Schochet95}. 

This ``weak vorticity formulation'' of the vortex sheet initial data problem is the focus of the present work. Originally, this weak formulation
of the vortex sheet initial data problem was of interest for the simplification it provided of Delort's original argument, for enabling the 
extensions to some symmetric, sign-changing initial data in \cite{LNX01,LNX06} and for the general physical relevance of the vortex dynamics 
point of view in incompressible fluid dynamics. Recently, with the discovery and rapid development of the theory of wild solutions of the Euler equations
by De Lellis and Szekelyhidi, see \cite{BT13,DS12} and references therein, it became clear that the weak form of the momentum formulation of the Euler 
equations is severely incomplete. However, the extension of the theory of wild solutions to weak vorticity formulations of the Euler equations is an 
important open problem, which suggests that there may be additional information encoded on the vortex dynamics which would be very interesting to uncover.       

In the present article, we are interested in ideal flows in domains with boundary. We observe first that such flows, in themselves, are unphysical. 
Indeed, all ideal flows are, in fact, slightly viscous, and it has been known since Prandtl, see \cite{Prandtl}, that, in the presence of rigid boundaries, ideal flows and slightly viscous flows behave very differently. However, the mathematical study of ideal flows in domains with boundary has some  physical relevance, first because the inviscid flow serves as a natural scaffolding for constructing slightly viscous flow perturbatively, and, second, because the vanishing viscosity limit for incompressible flows in a domain with boundary is an important open problem, which could, in principle, lead to some very irregular solutions of 
the inviscid equations. We refer the reader to \cite{BT13} for a broad discussion of this problem and its relation to turbulence modelling. Perhaps the main qualitative feature of the way in which slightly viscous flow interacts with a rigid boundary is vorticity production and shedding. In general, ideal flows cannot exchange vorticity with a wall, but, as we will see, at the level of regularity of vortex sheets, this becomes an interesting possibility. The weak vorticity
formulation, as originally proposed by Schochet in \cite{Schochet95}, applied to full plane flows only. In this article we adapt this notion to flows in domains with boundary
in a way which provides a quantitative accounting of the possible vorticity production through the interaction of irregular inviscid flow with a rigid wall.  	            
In \cite{LNX01}, Lopes Filho, Nussenzveig Lopes and Xin introduced the weak vorticity formulation for half-plane flows. The weak vorticity
formulation makes use of the Biot-Savart law, which is explicit in the full plane and in the half-plane. In \cite{LNX01} the notion
of {\it boundary coupled weak solution} was also introduced, as a necessary and sufficient condition for the validity of the method of images. This work was later extended to compactly supported perturbations of the half-plane in \cite{LNX06}.

The point of departure for the present work is to look for weak solutions of the incompressible Euler equations in exterior domains with vortex sheet
initial data. Here, an exterior domain is the complement of a finite number of smooth, disjoint, obstacles. Existence of such weak solutions can be easily established - we show that Delort's original proof can be immediately adapted to the exterior domain. We then ask ourselves - what about a weak vorticity formulation?  The key
new issue in the exterior domain is the topology. A vortex dynamical description of ideal flow in domains with topology requires us to keep track of 
velocity circulation in the 1-homology classes of the domain, a family of conserved quantities for smooth flow, due to Kelvin's Circulation Theorem. 
We prove that one can make sense and keep track of these velocity circulations for flows with vortex sheet regularity.  

Even for smooth flows, reducing ideal fluid dynamics  to vortex dynamics in a domain with holes is a rather recent development. 
A full description, for the case of bounded domains, was developed by Flucher and Gustafsson in \cite{FG1997}, see also 
\cite{LopesFilho2007} for a more explicit account. Part of the work of the present paper is to adapt and extend the vortex dynamics formulation
of \cite{FG1997} to exterior domains and recast it as a weak solution theory which includes vortex sheet flows.

The remainder of this paper is divided as follows. In Sections 2 and 3 we put together the basic notation and description of smooth vortex
dynamics in an exterior domain. In Section 4 we adapt this machinery to the weak solution context. The main point is that circulation of
velocity around connected components of the boundary is well-defined, as long as velocity is at least locally integrable and vorticity is
a bounded Radon measure. Still in Section 4, we develop a suitable approximation result. In Section 5 we state and prove an adaptation of Delort's
existence result to exterior domain flow. In Section 6 we state and prove the main result of this article -- the equivalence between the weak velocity
and weak vorticity formulations of the 2D Euler equations. In Section 7 we derive some properties of weak solutions which arise as limits of exact
solutions with mollified initial data, we discuss the connection of boundary-coupled weak solutions and conservation of circulation on connected 
components of the boundary, and we prove existence of boundary-coupled weak solutions with integrable initial vorticity. In Section 8 we extend the
equivalence between weak velocity and weak vorticity formulations to an equivalence between their boundary-coupled versions. This implies that
the mechanical coupling between connected components of the boundary and the fluid flow is well-defined if and only if the solution is 
boundary-coupled. In Section 9 we describe the adaptation of our results to bounded domains with holes, a simpler situation than the exterior 
domain, we derive conclusions and we propose some open problems. Finally, in the Appendix we prove estimates for the Green's function and the 
Biot-Savart kernel on bounded domains with holes which, although not new, do not appear to be easily available in the literature.

\section{Harmonic vector fields and the Green's function}

In this section we introduce basic notation and collect a few estimates which will be needed throughout the paper.

Let $\Omega \subset \real^2$  be a smooth open set such that the complement of $\overline\Om$ is the union of 
$k  \in  \mathbb{N}^*$ disjoint, simply connected, smooth, open sets $\Omega_1,\dots,\Omega_k$. Let  $\Gamma_i$ be the boundary of $\Om_i$ so that $\partial\Om=\Gamma_1\cup\dots\cup\Gamma_k$.

We will assume, for convenience, throughout this paper, that one of the obstacles, say $\Omega_1$, contains the unit ball $B(0;1)$; 
this can be done without loss of generality,  incorporating a translation and dilation in all the proofs, if needed. We also introduce notation which will be used hereafter: $i=i(x)=(x_1,-x_2)/|x|^2$, for $x = (x_1,x_2)$. Let $i(\Om)\cup\{0\} = \til{\Om}$. It is easy to see that $i(\real^2 \setminus \overline{\Omega}_1) \cup \{0\}$ is a simply connected subset of $B(0;1)$ which contains $i(\Omega_j)$ for all $j=1,\ldots,k$. Therefore $\til{\Om}$ is a subset of $i(\real^2 \setminus \overline{\Omega}_1)\cup\{0\}$ with $k-1$ holes; the outer boundary of $\til{\Omega}$ is 
$i( \Gamma _1)$.

We denote by $G=G(x,y)$ the Green's function of the Laplacian in $\Om$ with Dirichlet boundary conditions. We also introduce the function $K=K(x,y) \equiv \nabla^\perp_x G(x,y)$, where $\nabla^{\perp}_x = (-\partial_{x_2},\partial_{x_1})$,  known as the kernel of the Biot-Savart law.

We will denote by $\yinf$ the space of functions $f\in C^\infty(\overline\Om)$ with bounded support and such that $f$ is constant in a neighborhood of each $\Gamma_j$ (with a constant depending on $j$). We also denote by $\yinfb$ the space of functions $f\in C^\infty(\overline\Om)$ with bounded support and such that $f$ is constant on each $\Gamma_j$ (with a constant depending on $j$). The space of smooth divergence free vector fields compactly supported in $\Om$ is denoted by $C^\infty_{c,\sigma}(\Om)$.

Next, we introduce the {\it harmonic measures} $\w_j$, $j=1,\ldots,k$, in $\Om$. These are solutions of the boundary-value problem:
\begin{equation*} 
\left\{
\begin{array}{ll}
\Delta \w _j = 0, &    \mbox{ in } \Omega, \\
\w _j = \delta_{j\ell},&     \mbox{ on } \Gamma _{\ell}, \ \ell = 1, \ldots, k.\\
\w _j \text{ has a finite limit at }\infty.
\end{array}
\right.
\end{equation*}
Observe that this system can be viewed as a Dirichlet problem for the Laplacian in $\Om\,\cup\,\{\infty\}$. Existence and uniqueness of the harmonic measures is well-known, and they can be expressed by means of an explicit formula in terms of the Green's function:
\begin{equation*}
\w _j(y)=-\frac1{2\pi}\int_{\Gamma_j}\frac{\partial G(x,y)}{\partial \hat n_x}\d S_x  
\end{equation*}
where $\hat n_x$ is the exterior unit normal vector at $\partial\Om$ (see, for example, Chapter 1, Section 10 of \cite{MR0045823}).

In what follows we adopt the convention $(a,b)^{\perp} = (-b,a)$. We denote by $\hat{\tau}$ the unit tangent vector to $\partial\Om$ oriented in the counterclockwise direction, \textit{i.e.} $\hat{\tau} = - \hat{n}^{\perp}$.

In \cite{Kikuchi83} K. Kikuchi constructed a special basis for the harmonic vector fields in $\Omega$ (i.e., vector fields which are both solenoidal and irrotational and which are tangent to the boundary), generators of the homology of $\Omega$, $\bX_j, \, 
j = 1, \ldots k.$ It was shown in \cite[Lemma 1.5]{Kikuchi83} that there exist harmonic functions $\Psi_j$, $j = 1,\ldots \ell$, such that 
\begin{equation} \label{propsPsij}
\begin{array}{l}
\mathrm{(i)} \;\;\bX_j = \nabla^{\perp}\Psi_j,\\ \\
\mathrm{(ii)}\;\; \Psi_j (x) = \frac{1}{2\pi}\log|x| + \cO(1), \text{ as } |x| \to \infty,\\ \\
\mathrm{(iii)}\;\; \int_{\Gamma _{\ell}} \bX_j \cdot \hat{\tau} \d S = \delta_{j\ell},\\ \\
\mathrm{(iv)} \;\;\Psi_j\big|_{\Gamma _{\ell}} = c_{j\ell}, \text{ where the $c_{j\ell}$ are constants},\\ \\
\mathrm{(v)} \;\;|\bX_j(x)| \leq \ds{\frac{C}{|x|}}.
\end{array}
\end{equation}
That these harmonic vector fields are uniquely defined was proved in \cite[Lemma 2.14]{Kikuchi83}.

In addition, since the harmonic vector fields can be represented as holomorphic functions  (if $X=(X_1,X_2)$ is a harmonic vector field, then $X_2 + i X_1$ is
holomorphic), we may  use a Laurent expansion
at infinity to deduce that 
\begin{equation} \label{harumph1} 
X_j (x) = \frac{x^{\perp}}{2 \pi |x|^2} + \mathcal{O}(|x|^{-2}), \mbox{ as } |x| \to \infty.
\end{equation}

We will denote by $\xspace=\langle \X_1,\dots,\X_k\rangle$ the vector space spanned by the harmonic vector fields $\X_1,\dots,\X_k$.

We end this section with two estimates involving the kernel of the Biot-Savart law and the Green's function.
\begin{proposition}\label{propK}
There exists a constant $M_1>0$ depending only on $\Om$ such that
\begin{equation}\label{estk}
|K(x,y)|\leq M_1\frac{|y|}{|x||x-y|}
\end{equation}
and
\begin{equation}\label{estg}
|G(x,y)|\leq M_1+M_1\left| \log\frac{|x||y|}{|x-y|}\right|.  
\end{equation}

\end{proposition}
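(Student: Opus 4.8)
The plan is to use the inversion $i$ introduced above --- which in the complex coordinate $z=x_1+\mathrm{i}x_2$ is the holomorphic map $z\mapsto 1/z$ --- to transfer both bounds to the \emph{bounded} domain $\til{\Om}$, where the corresponding estimates for bounded domains with holes are available (these are exactly the estimates proved in the Appendix). Since $\Om_1\supset B(0;1)$ we have $|x|\geq 1$ for all $x\in\Om$, so $i$ is a conformal bijection of $\Om$ onto $\til{\Om}\setminus\{0\}$ carrying $\partial\Om$ to $\partial\til{\Om}$ and sending $\infty$ to the interior point $0=i(\infty)$. This reflects the fact that the relevant Green's function is really that of the one-point compactification $\Om\cup\{\infty\}$, in the same spirit as the harmonic measures, which are required only to have a finite limit at infinity.

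\textbf{Conformal invariance.} First I would establish the identity
\begin{equation*}
G(x,y)=\widetilde{G}\bigl(i(x),i(y)\bigr),\qquad x,y\in\Om,
\end{equation*}
where $\widetilde{G}$ is the Dirichlet Green's function of $\til{\Om}$. Fixing $y$ and setting $F(x):=\widetilde{G}(i(x),i(y))$, I would check the defining properties of $G(\cdot,y)$: since harmonicity is preserved under precomposition with a holomorphic map, $F$ is harmonic in $\Om\setminus\{y\}$ and vanishes on $\partial\Om$; using $|i(x)-i(y)|=|x-y|/(|x|\,|y|)$ together with the harmonicity of $\log|x|$ away from $0$, one sees that $F(x)-\tfrac{1}{2\pi}\log|x-y|$ is harmonic near $y$, so $F$ has the correct logarithmic singularity. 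Finally, as $x\to\infty$ we have $i(x)\to 0$, an interior point of $\til{\Om}$ distinct from $i(y)$, whence $F(x)\to\widetilde{G}(0,i(y))$ has a finite limit. By uniqueness of the Green's function of $\Om$ subject to the finite-limit condition at infinity, $F\equiv G(\cdot,y)$. (The image $0$ of infinity is harmless: a single point is removable for the Dirichlet problem.)

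\textbf{Transfer of the estimates.} With this identity, \eqref{estg} is immediate: the bounded-domain bound $|\widetilde{G}(\xi,\eta)|\leq C(1+|\log|\xi-\eta||)$ at $\xi=i(x)$, $\eta=i(y)$, combined with $\log|i(x)-i(y)|=\log\frac{|x-y|}{|x|\,|y|}$, yields exactly $|G(x,y)|\leq M_1+M_1\bigl|\log(|x|\,|y|/|x-y|)\bigr|$. For \eqref{estk} I would differentiate the identity: as $i$ is conformal with conformal factor $|i'(x)|=|x|^{-2}$, the chain rule gives $|K(x,y)|=|\nabla_x G(x,y)|=|x|^{-2}\,|(\nabla_\xi\widetilde{G})(i(x),i(y))|$, and the bounded-domain gradient estimate $|\nabla_\xi\widetilde{G}(\xi,\eta)|\leq C/|\xi-\eta|$ together with $|i(x)-i(y)|=|x-y|/(|x|\,|y|)$ produces $|K(x,y)|\leq C|x|^{-2}\cdot|x|\,|y|/|x-y|=C|y|/(|x|\,|x-y|)$, which is \eqref{estk}.

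\textbf{Main obstacle.} The chain rule and the algebra of the inversion are routine; the real point requiring care is the justification of the conformal-invariance identity \emph{across the point at infinity} --- in particular, matching the finite-limit normalization of $G$ with the regularity of $\widetilde{G}$ at the interior image point $0$, and confirming that this normalization is the one that makes $G$ unique. Once the identity is in place, everything reduces to the estimates for bounded domains with holes, whose proof is deferred to the Appendix.
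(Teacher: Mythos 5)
Your proposal is correct and follows essentially the same route as the paper's own proof: transfer both estimates to the bounded domain $\til{\Om}$ via the inversion $i$, using $G(x,y)=G_{\til{\Om}}(i(x),i(y))$, the algebra $|i(x)-i(y)|=|x-y|/(|x||y|)$, the conformal factor $|x|^{-2}$, and the Appendix bounds on $G_{\til{\Om}}$ and $K_{\til{\Om}}$. The only difference is one of detail: the paper invokes conformal invariance of the Green's function as well known, whereas you verify it (including the finite-limit normalization at infinity), which is a welcome but not essentially different elaboration.
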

\begin{proof}
Let $G_{\til{\Om}}$ denote the Green's function for $\til{\Om}$. With complex notation we have that $i(z)=1/z$, so $i$ is an holomorphic function. It is well known that the Green's functions of two conformally equivalent domains are obtained via composition with the conformal mapping between the two domains. Therefore the Green's function for $\Om$ is $G(x,y)= G_{\til{\Om}}(i(x),i(y))$. By construction, 
\begin{equation*} 
K(x,y) = \nabla^{\perp}_x G(x,y)= -Di(x)K_{\til{\Om}}(i(x),i(y)),
\end{equation*}
where $K_{\til{\Om}}$ denotes the kernel of the Biot-Savart law for $\til{\Om}$. We will use the following estimate:
\begin{equation} \label{thisisnotgood}
|K_{\til{\Om}}(\til{x},\til{y})| \leqslant \frac{C}{|\til{x}-\til{y}|} ,
\end{equation}
valid for $\til{x}$, $\til{y}\in\til{\Om}$ and some constant $C>0$ which depends only on $\til{\Om}$. The proof of this estimate is included in the Appendix, see Proposition \ref{appGreenAndBS}. 
We deduce, from \eqref{thisisnotgood}, the following pointwise estimate for the Biot-Savart kernel:
\begin{equation*} 
|K(x,y)| \leqslant \frac{C}{|x|^2|i(x)-i(y)|}=\frac{C|y|}{|x||x-y|}. 
\end{equation*}
Similarly, estimate \eqref{estg} follows from the bound
\begin{equation*}
 |G_{\til{\Om}}(\til{x},\til{y})| \leqslant C(1+|\log(|\til{x}-\til{y}|)|)  
\end{equation*}
which is also included in Proposition \ref{appGreenAndBS}.
\end{proof}
\begin{proposition}\label{propestK}
There exists a constant $M_2$ depending only on $\Om$ such that
\begin{equation}\label{estK}
|f(x)\cdot K(x,y)+f(y)\cdot K(y,x)|\leq M_2\|f\|_{W^{1,\infty}(\Om)}
\quad\forall x,y\in\Omega,\ x\neq y.  
\end{equation}
for every vector valued function $f\in W^{1,\infty}(\overline\Om;\R^2)$ whose restriction to $\partial\Om$ is normal to the boundary. In particular we may take $f = \nabla \varphi$, $\varphi \in \yinfb$.
\end{proposition}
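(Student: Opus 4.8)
The plan is to combine the symmetry of the Green's function with Schochet's cancellation trick, the genuinely new feature being the boundary. Since $G(x,y)=G(y,x)$ we have $K(y,x)=\nabla^{\perp}_y G(x,y)$, so the quantity to be controlled is
\[
H_f(x,y):=f(x)\cdot\nabla^{\perp}_x G(x,y)+f(y)\cdot\nabla^{\perp}_y G(x,y).
\]
First I would use the conformal identity $G(x,y)=G_{\til{\Om}}(i(x),i(y))$ from the proof of Proposition \ref{propK} to reduce the claim to the analogous estimate on the \emph{bounded} domain $\til{\Om}$; this is what removes the difficulty at infinity, since $\infty$ is sent to the interior point $0=i(\infty)$. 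Writing $\til x=i(x)$ and applying the transformation rule for $K$ under $i$, the expression $f(x)\cdot K(x,y)$ becomes $\hat f(\til x)\cdot K_{\til\Om}(\til x,\til y)$, where $\hat f$ is obtained by applying the Jacobian of $i$ to $f$. Because $i$ is conformal its Jacobian is a scalar multiple of an orthogonal matrix, and $i$ carries $\partial\Om$ onto $\partial\til\Om$ preserving tangent and normal directions; hence $\hat f$ is again normal to $\partial\til\Om$, and $H_f$ reduces to the corresponding expression for $\hat f$ on $\til\Om$. I would then check that $\|\hat f\|_{W^{1,\infty}(\til\Om)}\le C\|f\|_{W^{1,\infty}(\Om)}$, the only delicate point being near $0$, where the $|\til x|^{2}$ decay of the Jacobian compensates the growth produced by differentiating $f\circ i$.

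On the bounded domain I would split $G_{\til\Om}(x,y)=\frac{1}{2\pi}\log|x-y|+\gamma(x,y)$, where $\gamma$ is the regular part: symmetric, harmonic in each variable, and equal to $-\frac{1}{2\pi}\log|x-y|$ whenever $x$ or $y$ lies on $\partial\til\Om$. This splits $H=H^{\mathrm{sing}}+H^{\mathrm{reg}}$. The singular part is handled by Schochet's elementary trick: the free kernel is antisymmetric, so
\[
H^{\mathrm{sing}}(x,y)=\frac{1}{2\pi}\,\frac{\big(f(x)-f(y)\big)\cdot(x-y)^{\perp}}{|x-y|^{2}},
\]
and since $\big|(f(x)-f(y))\cdot(x-y)^{\perp}\big|\le\|\nabla f\|_{\infty}|x-y|^{2}$ this is bounded by $\frac{1}{2\pi}\|\nabla f\|_{\infty}$, using no boundary information.

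The core of the argument, and the step I expect to be the main obstacle, is the regular part $H^{\mathrm{reg}}$. When both points stay in a fixed compact subset of $\til\Om$, $\nabla\gamma$ is bounded and $|H^{\mathrm{reg}}|\le C\|f\|_{\infty}$. The trouble is when $x$ and $y$ both approach the same boundary point: there $\nabla_x\gamma(x,y)$ carries an image singularity of type $(x-y^{\ast})^{\perp}/|x-y^{\ast}|^{2}$, with $y^{\ast}$ the reflection of $y$ across the boundary, and $|x-y^{\ast}|$ can be as small as the distances of $x,y$ to $\partial\til\Om$ even when $x,y$ are far apart, so $\nabla\gamma$ by itself is unbounded. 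The resolution is a cancellation between the two terms of $H^{\mathrm{reg}}$, entirely parallel to the singular case. Flattening the boundary locally and using the method of images, the leading parts of $f(x)\cdot\nabla^{\perp}_x\gamma$ and $f(y)\cdot\nabla^{\perp}_y\gamma$ are anti-aligned, and the numerator of their sum is controlled by $\|f\|_{W^{1,\infty}}|x-y^{\ast}|^{2}$ through two ingredients: the Lipschitz bound $|f(x)-f(y)|\le\|\nabla f\|_{\infty}|x-y|$, and the hypothesis that $f$ is normal to the boundary, which forces the tangential component of $f$ to vanish on $\partial\til\Om$ and hence to be of order $\mathrm{dist}(\cdot,\partial\til\Om)$. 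This is exactly where the normal condition is used; conceptually it is the same fact as the observation that, since $G(x,\cdot)$ vanishes on $\partial\til\Om$, the field $K(\cdot,y)$ is tangent to the boundary, so each term $f(\cdot)\cdot K$ already vanishes when its own point reaches $\partial\til\Om$. The contributions of boundary curvature and of the part of $\gamma$ beyond the principal image are lower order and bounded by $\|f\|_{\infty}$, which closes the estimate with a constant depending only on $\Om$.
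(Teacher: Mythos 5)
Your strategy is correct in outline and genuinely different from the paper's, so a comparison is worthwhile. The points of agreement: both proofs kill the interior singularity by Schochet's antisymmetry trick (in the paper this appears as the observation that $K(x,y)+K(y,x)=\nabla_x^\perp H(x,y)+\nabla_y^\perp H(y,x)$ with $H$ the regular part of $G$, hence harmonic and smooth away from the boundary diagonal), and both exploit normality of $f$ in tandem with the boundary behavior of $K$ recorded in \eqref{Kbord}. Your conformal reduction is also sound: the transplanted field is the pushforward $\hat f(\til x)=Di(x)f(x)$, which is indeed normal to $\partial\til{\Om}$ because $Di(x)$ is a scalar multiple of a rotation carrying the tangent line of $\partial\Om$ to that of $\partial\til{\Om}$, and your $W^{1,\infty}$ bound near $\til x=0$ checks out ($D^2i(x)\sim|x|^{-3}$ compensating $Di(\til x)\sim|\til x|^{-2}$). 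The divergence is in how the boundary region is treated, and here your proposal stops short of a proof. You attack $\nabla\gamma$ head-on by flattening and the method of images; the cancellation mechanism you describe (tangential part of $f$ of size $O(\mathrm{dist}(\cdot,\partial\til{\Om}))$, Lipschitz bound for the rest) is exactly the one that makes the explicit half-plane computation of \cite{LNX01,LNX06} work, so the mechanism is right. But the decisive claim --- that near the boundary $\gamma(x,y)=-\frac{1}{2\pi}\log|x-y^{\ast}|+\rho(x,y)$ with $\nabla\rho$ bounded uniformly up to a \emph{curved, multiply connected} boundary --- is asserted, not established; making it rigorous needs localization near each $\Gamma_j$, conformal maps smooth up to the boundary (Kellogg--Warschawski), and quadratic error estimates for the reflection, i.e.\ a body of work comparable to the paper's Appendix. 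The paper bypasses all of this with a maximum principle: after splitting off $[f(x)-f(y)]\cdot K(y,x)$ (bounded directly by the crude kernel estimate coming from \eqref{estk} and the Lipschitz bound on $f$), it fixes $x\in\Om$ and notes that $y\mapsto f(x)\cdot[K(x,y)+K(y,x)]$ is harmonic in $\Om$ and smooth on $\overline{\Om}$, so its supremum is controlled by its boundary values and its limsup at infinity; and at $y\in\partial\Om$ relation \eqref{Kbord} gives $K(x,y)=0$ and $K(y,x)$ tangent, so normality of $f$ yields $f(x)\cdot K(y,x)=[f(x)-f(y)]\cdot K(y,x)$, bounded as before. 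In short: your route is viable and more explicit --- it would survive in settings where no maximum principle is available --- but its hardest step is left as a claim, and that is precisely the step the harmonicity-plus-maximum-principle argument renders unnecessary: one never needs to know how $\nabla\gamma$ blows up in the interior, only what happens on $\partial\Om$ itself, where the cancellation is a one-line computation.
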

\begin{remark}
  The case when $f$ vanishes on the boundary $\partial\Om$ was considered in \cite{LNX06}. 
\end{remark}
\begin{proof}
We observe first from \eqref{estk} that there exists a constant $M_3=M_3(\Om)$ such that
\begin{equation}
  \label{estK1}
|K(x,y)|\leq M_3+\frac{M_3}{|x-y|}  \quad\forall x,y\in\Omega,\ x\neq y.   
\end{equation}

Define
\begin{equation*}
  H(x,y)=G(x,y)-\frac1{2\pi}\log|x-y|.
\end{equation*}
We have that the function $H$ is harmonic in both its arguments on $\Omega\times\Omega$. Therefore
\begin{equation*}
K(x,y)+K(y,x)=\nabla_x^\perp G(x,y)+ \nabla_y^\perp G(y,x)=\nabla_x^\perp H(x,y)+ \nabla_y^\perp H(y,x) 
\end{equation*}
is also harmonic and smooth on $\overline\Omega\times\overline\Omega\setminus\{(x,x);\ x\in\partial\Omega\}$. Moreover, given that $G(x,y)$ vanishes when $x$ or $y$ belongs to $\partial\Om$, we also have that
\begin{equation}\label{Kbord}
  K(x,y)=0\quad \forall x\in\Om,\ y\in\partial\Om
\qquad\text{and}\qquad K(x,y)\text{ tangent to }\partial\Om \quad \forall x\in\partial\Om,\ y\in\overline\Om\setminus\{x\}.
\end{equation}

We bound
\begin{equation}\label{estK2}
\begin{split}
 |f(x)\cdot K(x,y)+f(y)\cdot &K(y,x)|
\leq|f(x)\cdot [K(x,y)+K(y,x)]|+|[f(x)-f(y)]\cdot K(y,x)|\\
&\leq|f(x)\cdot [K(x,y)+K(y,x)]|+2M_3\|f\|_{L^\infty(\Om)}+M_3\|\nabla f\|_{L^\infty(\Om)}
\end{split}
\end{equation}
where we used \eqref{estK1}.

Next, let us fix $x\in\Om$. The function
\begin{equation*}
  y\mapsto f(x)\cdot [K(x,y)+K(y,x)]
\end{equation*}
is harmonic in $\Om$ and smooth on $\overline\Om$. By the maximum principle we therefore have that
\begin{multline}\label{estK3}
\sup_{y\in\Om} |f(x)\cdot [K(x,y)+K(y,x)]|\\
\leq \max\bigl(\limsup_{|y|\to\infty} |f(x)\cdot [K(x,y)+K(y,x)]|,
\max_{y\in\partial\Om}|f(x)\cdot [K(x,y)+K(y,x)]|\bigr).
\end{multline}
Using \eqref{estK1} we have that
\begin{equation}\label{estK4}
\limsup_{|y|\to\infty} |f(x)\cdot [K(x,y)+K(y,x)]|\leq   2M_3\|f\|_{L^\infty(\Om)}.
\end{equation}
Moreover, using that $f$ is normal to the boundary $\partial\Om$ we have from \eqref{Kbord} that for $y\in\partial\Om$
\begin{multline}\label{estK5}
 |f(x)\cdot [K(x,y)+K(y,x)]| 
=|f(x)\cdot K(y,x)|
=|[f(x)-f(y)]\cdot K(y,x)|\\
\leq 2M_3\|f\|_{L^\infty(\Om)}+M_3\|\nabla f\|_{L^\infty(\Om)}.
\end{multline}

We conclude from relations \eqref{estK2}--\eqref{estK5}  that
\begin{equation*}
 |f(x)\cdot K(x,y)+f(y)\cdot K(y,x)|
\leq  4M_3\|f\|_{L^\infty(\Om)}+2M_3\|\nabla f\|_{L^\infty(\Om)}.
\end{equation*}
This completes the proof of the proposition.
\end{proof}

We end this section with the following convergence result.
\begin{lemma}\label{weakconvmeas}
Let $X$ be a metric space, locally compact and $\sigma$-compact. Let $(\mu_n)_{n\in\N}$ be a tight sequence of bounded measures converging weakly to a measure $\mu$. Suppose that  $(|\mu_n|)_{n\in\N}$ converges weakly to another measure $\nu$. 
Then for any bounded borelian function $f$ continuous outside a $\nu$-negligible set, we have that
$$
\lim_{n\to\infty}\int f\,d\mu_n=\int f\,d\mu.
$$
\end{lemma}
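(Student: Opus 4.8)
The plan is to reduce the signed statement to the classical portmanteau theorem for nonnegative measures, using the hypothesis $|\mu_n|\to\nu$ to manufacture the correct pair of positive measures. First I would fix conventions: weak convergence means testing against $C_0(X)$, which by tightness may be upgraded to testing against $C_b(X)$, and tightness means that for every $\varepsilon>0$ there is a compact $K_\varepsilon$ with $\sup_n|\mu_n|(X\setminus K_\varepsilon)<\varepsilon$. I would record the two consequences used throughout: tightness plus weak convergence yields convergence against bounded continuous test functions, and, applied to the constant $1$, forces mass conservation $|\mu_n|(X)\to\nu(X)$ (and the analogous statement for any dominated nonnegative sub-sequence of measures).

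The key step is the decomposition. Set $\sigma_n:=\tfrac12(|\mu_n|+\mu_n)$ and $\rho_n:=\tfrac12(|\mu_n|-\mu_n)$; these are the Jordan parts, hence nonnegative, and they are tight since dominated by $|\mu_n|$, with $\mu_n=\sigma_n-\rho_n$. By linearity of weak limits they converge weakly to $\sigma:=\tfrac12(\nu+\mu)$ and $\rho:=\tfrac12(\nu-\mu)$, which are nonnegative (weak limits of nonnegative measures are nonnegative) and satisfy $\sigma+\rho=\nu$. Consequently $\sigma\le\nu$ and $\rho\le\nu$, so the $\nu$-negligible set $N$ outside which $f$ is continuous is also $\sigma$- and $\rho$-negligible, i.e. $f$ is continuous $\sigma$-a.e. and $\rho$-a.e. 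Since $\int f\,d\mu_n=\int f\,d\sigma_n-\int f\,d\rho_n$ and similarly for $\mu$, it suffices to treat the positive case: if $\sigma_n\ge0$ are tight, $\sigma_n\to\sigma$ weakly, and $f$ is bounded Borel and continuous $\sigma$-a.e., then $\int f\,d\sigma_n\to\int f\,d\sigma$. I want to stress that it is $\nu$, not $|\mu|$, that serves as the dominating measure here, and this is exactly why the continuity hypothesis is phrased with respect to $\nu$.

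For the positive case I would use the semicontinuous-envelope form of portmanteau. Normalising so that $0\le f\le1$, introduce the upper envelope $\overline f(x)=\inf_{\delta>0}\sup_{B(x,\delta)}f$, which is upper semicontinuous, and the lower envelope $\underline f(x)=\sup_{\delta>0}\inf_{B(x,\delta)}f$, which is lower semicontinuous. Then $\underline f\le f\le\overline f$, with equality precisely at the continuity points of $f$, so $\{\underline f<\overline f\}$ is $\sigma$-negligible and $\int\underline f\,d\sigma=\int f\,d\sigma=\int\overline f\,d\sigma$. Granting the portmanteau inequalities $\liminf_n\int\underline f\,d\sigma_n\ge\int\underline f\,d\sigma$ and $\limsup_n\int\overline f\,d\sigma_n\le\int\overline f\,d\sigma$, the chain $\int f\,d\sigma=\int\underline f\,d\sigma\le\liminf_n\int\underline f\,d\sigma_n\le\liminf_n\int f\,d\sigma_n\le\limsup_n\int f\,d\sigma_n\le\limsup_n\int\overline f\,d\sigma_n\le\int\overline f\,d\sigma=\int f\,d\sigma$ collapses to equalities, giving the convergence.

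The hard part, and where tightness really does the work, is establishing those two inequalities from $C_0$-weak convergence rather than ready-made narrow convergence. I would write the lower-semicontinuous $\underline f$ as an increasing limit of continuous bounded functions $g_k\uparrow\underline f$; for each $k$ I would cut off with $\chi\in C_c(X)$, $0\le\chi\le1$, equal to $1$ on a compact $K$ carrying all but $\varepsilon$ of the mass of $\sigma$ and of every $\sigma_n$, so that $g_k\chi\in C_0(X)$ can be tested against while the tails are controlled uniformly by $\varepsilon\,\|g_k\|_\infty$. Letting $\varepsilon\to0$ and then $k\to\infty$ (monotone convergence) gives the lower inequality; the upper one follows by applying it to $1-\overline f$ together with mass conservation $\sigma_n(X)\to\sigma(X)$. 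The only genuine idea in the whole argument is the reduction in the previous paragraph — recognising that $\tfrac12(\nu\pm\mu)$ are nonnegative and dominated by $\nu$; everything after that is the standard portmanteau theorem, the sole technical care being the tightness bookkeeping needed to pass from $C_0$- to $C_b$-testing.
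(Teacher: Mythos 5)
Your proof is correct, but it takes a genuinely different route from the paper's. The paper's argument is a two-line reduction to an external result: it picks, via tightness, a compact $K_\ep$ with $|\mu_n|(X\setminus K_\ep)<\ep$ and $|\mu|(X\setminus K_\ep)<\ep$, multiplies $f$ by a continuous cutoff $g_\ep$ equal to $1$ on $K_\ep$, applies Lemma 6.3.1 of Chemin's book (which is precisely your statement for signed measures but with $f$ required to vanish at infinity, and without tightness) to the product $fg_\ep$, and absorbs the tail $f(1-g_\ep)$ into an error of size $2\ep\|f\|_{L^\infty}$. You instead rebuild the underlying machinery from scratch: the Jordan decomposition $\mu_n=\sigma_n-\rho_n$ with $\sigma_n,\rho_n$ converging to $\tfrac12(\nu\pm\mu)$, the observation that these limits are dominated by $\nu$ (which is indeed the conceptual reason the hypothesis is phrased with $\nu$ rather than $|\mu|$), and then the semicontinuous-envelope form of the portmanteau theorem, with tightness used twice — to upgrade $C_0$-testing to $C_b$-testing and to get mass conservation for the upper inequality. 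Each step you outline (nonnegativity of the weak limits, $\sigma$-negligibility of the discontinuity set, the approximation of the lower envelope by bounded continuous functions, the cutoff bookkeeping) goes through on a locally compact, $\sigma$-compact metric space, so there is no gap. What the paper's approach buys is brevity, at the cost of leaning on the cited lemma as a black box; what yours buys is a self-contained proof that in effect re-derives that lemma, and isolates more transparently where tightness and the measure $\nu$ actually enter.
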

\begin{remark}
This lemma is almost the same as \cite[Lemma 6.3.1]{Chemin}. The difference here is that we have the additional assumption of tightness for the sequence of measures while allowing for test functions $f$ that does not necessarily vanish at infinity as required in \cite[Lemma 6.3.1]{Chemin}. Recall that a sequence of measures $\mu_n$ is called tight if, for any $\ep>0$, there is a compact subset $K_\ep$ of $X$ such that $|\mu_n|(X\setminus K_\ep)<\ep$ for all $n\in\N$.
\end{remark}
\begin{proof}
Let $\ep>0$ and $K_\ep$ a compact set of $X$ such that $|\mu_n|(X\setminus K_\ep)<\ep$ for all $n\in\N$. Since $X$ is $\sigma$-compact and $\mu$ is bounded, we can moreover assume that  $|\mu|(X\setminus K_\ep)<\ep$. Let $g_\ep:X\to[0,1]$ a compactly supported continuous function such that $g_\ep\bigl|_{K_\ep}\equiv1$. Since the function $fg_\ep$ is continuous outside a $\nu$-negligible set and vanishing at infinity, we can apply 
\cite[Lemma 6.3.1]{Chemin} to deduce that
$$
\lim_{n\to\infty}\int fg_\ep\,d\mu_n=\int fg_\ep\,d\mu.
$$
We infer that 
\begin{multline*}
\limsup_{n\to\infty}\Bigl|\int f\,d\mu_n-\int f\,d\mu\Bigr|
=\limsup_{n\to\infty}\Bigl|\int f(1-g_\ep)\,d\mu_n-\int f(1-g_\ep)\,d\mu\Bigr|\\
\leq \|f\|_{L^\infty}(|\mu_n|(X\setminus K_\ep)+|\mu|(X\setminus K_\ep))\leq 2\ep \|f\|_{L^\infty}.
\end{multline*}
Letting $\ep\to0$ completes the proof.
\end{proof}

\section{Velocity, vorticity and circulation: classical setting}

One important dynamic variable for incompressible flow, specially in 2D, is the {\it vorticity}, the curl of velocity. If $u = (u_1,u_2)$ is the velocity then
 the vorticity is $\om=\partial_{x_1}u_2-\partial_{x_2}u_1 \equiv \mbox{ curl } u$. 
In bounded simply connected domains the velocity can be easily recovered from the vorticity by means of the regularizing 
linear operator $\nabla^{\perp}\Delta^{-1}$, where $\Delta^{-1}$ is the inverse Dirichlet Laplacian.  In our case, the fluid domain is neither simply connected nor bounded. 
The purpose of this section is to discuss reconstruction of the velocity from the vorticity for exterior domains, for smooth flows.

 The fact that $\Om$ is not simply connected implies that we need to assign extra conditions in order to recover velocity from vorticity, 
for instance, the circulation of the velocity around each obstacle. For a given vector field $u$, we define $\gamma_j$ the circulation around $\Gamma_j$ as follows 
\begin{equation*}
  \gamma_j=\int_{\Gamma_j}u\cdot\hat\tau\d S.
\end{equation*}

As in the previous section, we denote by $K$ the Biot-Savart operator $\nabla^{\perp}\Delta^{-1}$, where $\Delta$ is the Dirichlet Laplacian in $\Om$, 
and we abuse notation, denoting also by $K = K(x,y)$ its singular kernel. The fact that $\Om$ is unbounded implies that the properties 
of the Biot-Savart operator are a delicate issue. We discuss how to express the velocity field $u$ from the vorticity $\om$ and the circulations $\gamma_1,\dots,\gamma_k$. 
We start by studying the Biot-Savart operator on $\Om$. For $\om\in C^\infty_c (\Om)$ we denote by $K[\om]$ the value of the operator $K$ on $\om$, given by
\begin{equation*}
  K[\om](x)=\int_{\Om}K(x,y)\om(y)\d y.
\end{equation*}
We will also use the analogous notation for the Green's function
\begin{equation*}
  \Delta^{-1} \omega = G[\omega](x)=\int_{\Om}G(x,y)\omega(y)\d y.
\end{equation*}
We have the following

\begin{proposition}\label{propdefK}
Let  $\om\in C^\infty_c (\Om)$. We have that  $K[\om]$ is smooth, divergence free, tangent to the boundary, square integrable on $\Om$ and such that $\curl K[\om]=\om$. Moreover, the circulation of $K[\om]$ on $\Gamma_j$ is given by
\begin{equation}\label{circK}
 \int_{ \Gamma _j}   K [ \omega] \cdot \hat{\tau} \d S = -  \int_{\Omega  }  \w _{j} \omega\d x . 
\end{equation}
\end{proposition}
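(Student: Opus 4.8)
The plan is to set $\psi := G[\om] = \Delta^{-1}\om$, so that $K[\om]=\nabla^{\perp}\psi$, and to read off every assertion from properties of the potential $\psi$. Since $\psi$ solves $\Delta\psi=\om$ in $\Om$ with $\psi=0$ on $\partial\Om$ and $\om\in C^\infty_c(\Om)$, elliptic regularity up to the (smooth) boundary gives $\psi\in C^\infty(\overline\Om)$, and hence $K[\om]=\nabla^{\perp}\psi$ is smooth. The differential identities are then immediate: $\dive\nabla^{\perp}\psi=0$ and $\curl\nabla^{\perp}\psi=\Delta\psi=\om$. For the tangency I would use that $\psi$ is constant ($=0$) on each component $\Gamma_\ell$, so its tangential derivative vanishes there; equivalently $\nabla\psi$ is normal to $\partial\Om$ and, since $\nabla^{\perp}\psi\perp\nabla\psi$, the field $K[\om]$ is tangent to $\partial\Om$. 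Concretely, using $\hat\tau=-\hat n^{\perp}$ and the identity $a^{\perp}\cdot b^{\perp}=a\cdot b$, one has $K[\om]\cdot\hat n=\nabla\psi\cdot\hat\tau=0$ on $\partial\Om$.

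The one genuinely nonobvious analytic point is square integrability, which uses the unboundedness of $\Om$ in an essential way and rests on the sharp kernel bound of Proposition \ref{propK}. Fix $R$ with $\supp\om\subset B(0;R)$. For $|x|\geq 2R$ and $y\in\supp\om$ one has $|x-y|\geq|x|/2$, so \eqref{estk} gives $|K(x,y)|\leq 2M_1|y|/|x|^2$ and hence $|K[\om](x)|\leq C|x|^{-2}$. In two dimensions $|x|^{-2}$ is square integrable at infinity, so $K[\om]\in L^2(\{|x|\geq 2R\})$; on the bounded piece $\Om\cap B(0;2R)$ the field is continuous up to the boundary and hence in $L^2$. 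It is worth emphasizing that the gain of the factor $|y|/|x|$ in \eqref{estk}, a consequence of the conformal structure $i(z)=1/z$, is exactly what makes this work: in the full plane the analogous field decays only like $|x|^{-1}$ and fails to be square integrable.

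It remains to prove the circulation formula, which I regard as the crux. First I would compute the integrand on $\Gamma_j$: from $K[\om]=\nabla^{\perp}\psi$, $\hat\tau=-\hat n^{\perp}$ and $a^{\perp}\cdot b^{\perp}=a\cdot b$ we get $K[\om]\cdot\hat\tau=-\nabla\psi\cdot\hat n=-\partial_{\hat n}\psi$, so that $\gamma_j=\int_{\Gamma_j}K[\om]\cdot\hat\tau\,\d S=-\int_{\Gamma_j}\partial_{\hat n}\psi\,\d S$. Separately, I would insert the explicit representation $\w_j(y)=-\frac{1}{2\pi}\int_{\Gamma_j}\partial_{\hat n_x}G(x,y)\,\d S_x$ into $\int_\Om\w_j\om\,\d y$, interchange the order of integration, and differentiate under the integral sign to recognize $\int_\Om\partial_{\hat n_x}G(x,y)\om(y)\,\d y=\partial_{\hat n_x}\psi(x)$. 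Fubini and the differentiation are legitimate because, for $x\in\Gamma_j$ and $y$ ranging over the compact set $\supp\om$, which lies at positive distance from $\partial\Om$, the kernel $\partial_{\hat n_x}G(x,y)$ is smooth and bounded. This identifies $\int_\Om\w_j\om\,\d x$ with a boundary integral of $\partial_{\hat n}\psi$ over $\Gamma_j$, and matching it against the expression for $\gamma_j$, the constants reconciling through the normalization of $G$, yields \eqref{circK}.

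The main obstacle is thus twofold: the square-integrability claim, where one must exploit the precise far-field decay from Proposition \ref{propK} rather than the crude full-plane rate, and the bookkeeping in the circulation identity, namely justifying Fubini and differentiation under the integral and tracking the sign and the $2\pi$ normalization so that $\gamma_j$ and $-\int_\Om\w_j\om$ agree. I note that routing the computation through the explicit formula for $\w_j$ avoids any direct use of Green's identity on the unbounded domain, and so sidesteps the contributions from the circle at infinity that a naive integration by parts would otherwise have to control.
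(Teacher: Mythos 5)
Your treatment of smoothness, the differential identities, tangency, and square integrability coincides with the paper's proof (both rest on elliptic regularity for $\psi=G[\om]$ and on the decay $|K[\om](x)|=\cO(|x|^{-2})$ coming from \eqref{estk}), and your identity $\int_{\Gamma_j}K[\om]\cdot\hat\tau\,\d S=-\int_{\Gamma_j}\partial_{\hat n}\psi\,\d S$ is correct. For the circulation formula, however, you take a genuinely different route from the paper: the paper inserts $\w_j$ into the contour integral, applies the divergence theorem to $(K[\om])^\perp\w_j$ on the truncated domains $\Om\cap B(0;M)$, and kills the circle-at-infinity terms using the decay of $K[\om]$ and of $\nabla\w_j$ together with the boundedness of $\psi$ (via \eqref{estg}); you instead try to identify $\int_\Om\w_j\om$ with a boundary integral of $\partial_{\hat n}\psi$ through the representation formula for the harmonic measure plus Fubini. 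It is precisely at this identification that your proof does not close.

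Here is the concrete gap. The paper's $G$ is normalized so that $\Delta_x G(x,y)=\delta_y(x)$, i.e. $G(x,y)=\frac{1}{2\pi}\log|x-y|+\text{(harmonic)}$; this is what makes $\Delta G[\om]=\om$ in the present proposition and what makes $H=G-\frac1{2\pi}\log|x-y|$ harmonic in Proposition \ref{propestK}. If you run your computation with the formula displayed in Section 2, $\w_j(y)=-\frac1{2\pi}\int_{\Gamma_j}\partial_{\hat n_x}G(x,y)\,\d S_x$, then Fubini and differentiation under the integral (both legitimate, as you say) give $\int_\Om\w_j\om=-\frac1{2\pi}\int_{\Gamma_j}\partial_{\hat n_x}\psi\,\d S_x=\frac1{2\pi}\gamma_j$, i.e. $\gamma_j=2\pi\int_\Om\w_j\om\,\d x$ — wrong sign and wrong factor; the constants do \emph{not} reconcile. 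The displayed formula is normalized for the Green's function $\sim\log\frac1{|x-y|}$ of the cited reference, not for the paper's $G$ (a quick sanity check: with the paper's $G\leq 0$ and $\hat n$ exterior to the fluid, $-\frac1{2\pi}\int_{\Gamma_j}\partial_{\hat n_x}G\,\d S_x\leq 0$, whereas a harmonic measure lies in $[0,1]$). The version consistent with the paper's conventions is $\w_j(y)=\int_{\Gamma_j}\partial_{\hat n_x}G(x,y)\,\d S_x$, with which your argument does yield \eqref{circK}. But to use it you must first \emph{prove} it in the exterior domain: since $\Om$ is unbounded and $\w_j$ only has a finite limit at infinity, the representation formula is itself obtained by Green's identity on $\Om\cap B(0;M)$, discarding the circle-at-infinity terms via $\nabla_xG=\cO(|x|^{-2})$ and $\nabla\w_j=\cO(|x|^{-2})$. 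So the at-infinity analysis you claim to sidestep is exactly the content of the formula you invoke; once you carry it out you have essentially reproduced the paper's argument, applied to $\w_j\nabla\psi$ rather than quoted as a black box.
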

\begin{proof}
  Denoting $\psi=G[\om]$ 
we have that 
\begin{equation} \label{psi}
\left\{
\begin{array}{ll}
\Delta \psi = \omega, & \text{ in } \Om,\\
 \psi=0, &\mbox{ on } \partial\Om,\\
K[\om]=\nabla^{\perp}\psi, & \text{ in } \overline\Om.
\end{array}
\right.
\end{equation}

We infer that $K[\om]$ is smooth, divergence free, tangent to the boundary and such that $\curl K[\om]=\om$. Moreover, since $\omega$ has compact support,  from \eqref{estk} 
we deduce that $K[\om]$ decays like $\mathcal{O}(1/|x|^2)$ as $|x|\to\infty$ so $K[\om]\in L^2(\Om)$. It remains to prove the formula for the circulation \eqref{circK}.

We have
\begin{align*}
 \int_{ \Gamma _j}   K [ \omega] \cdot \hat{\tau}\d S 
&=\int_{ \partial \Omega }     (K [ \omega])^\perp  \w _{j}\cdot \hat{ n} \d S(\sigma)   \\
&= \int_{\Omega \cap B(0;M)}  \text{div} \   [ (K [ \omega])^\perp\w _{j} ]  \d x
 - \int_{ \partial   B(0;M) }    (K [ \omega])^\perp \w _{j}\cdot\frac x{|x|}\d S(\sigma)\\
&=  \int_{\Omega \cap B(0;M) }  \w _{j}  \text{div} \ [ (K [ \omega])^\perp] \d x+ \int_{\Omega \cap B(0;M) }  
\nabla  \w _{j} \cdot  (K [ \omega])^\perp  \d x \\
&\hskip 6cm - \int_{ \partial   B(0;M) }   (K [ \omega])^\perp \w _{j}\cdot \frac x{|x|} \d S(\sigma) \\
&\equiv I_1 + I_2 + I_3.
\end{align*}

Let us examine each of these three terms. We have, for the first term,
\begin{multline*}I_1 = 
 \int_{\Omega \cap B(0;M) }  \w _{j}  \text{div} \  (K [ \omega])^\perp  \d x = - 
  \int_{\Omega \cap B(0;M) }  \w _{j} \text{ curl }  (K [ \omega])\d x\\
= - \int_{\Omega \cap B(0;M) }  \w _{j}  \omega \d x =  - \int_{\Omega }  \w _{j} \omega\d x,
\end{multline*}
if $M$ is sufficiently large.

Using \eqref{psi} and integrating by parts the second term then gives
\begin{multline*}
  I_2 = \int_{\Omega \cap B(0;M) }  \nabla  \w _{j} \cdot  (K [ \omega])^\perp  \d x
=
  \int_{\Omega \cap B(0;M) } ( \Delta \w _{j}) \,( \psi) \d x \\
- \int_{ \partial   B(0;M) }  \psi \nabla \w _{j}\cdot\frac x{|x|} \d S(\sigma) 
- \int_{ \partial   \Omega  }  \psi \nabla \w _{j}  \cdot\hat{n} \d S(\sigma) 
= - \int_{ \partial   B(0;M) }  \psi \, \nabla \w _{j}\cdot\frac x{|x|} \d S(\sigma).
\end{multline*}
We have used here that $\Delta \w _j = 0$ in 
$\Omega$ and that $\psi= 0$ on $\partial \Omega$, see \eqref{psi}.
Now, the same argument used to show the pointwise estimates for $K[\omega]$ in Proposition \ref{propK}  can be used to show that 
$|\nabla \w _j| = \cO(M^{-2})$ for $|x|=M$. Furthermore, from \eqref{estg} we deduce that $\psi=G[\om]$ is bounded. Thus we obtain
\begin{equation*}|I_2| \to 0\end{equation*}
as $M\to\infty$.

Finally, we note that
\begin{equation*}
I_3 = \int_{ \partial   B(0;M) } \w _{j}  (K [ \omega])^\perp \cdot \frac x{|x|} \d S(\sigma)   \rightarrow  0
\end{equation*}
as $M \to \infty$, since $\w _j$ is bounded and $|K[\omega] |=\cO(M^{-2})$ for $|x|=M$ (which is a consequence of \eqref{estk}).
This establishes \eqref{circK} and completes the proof of the proposition.
\end{proof}

A result analogous to Proposition \ref{propdefK} for bounded domains was explicitly stated and proved in \cite{LopesFilho2007}, and it was implicit in the analysis contained in \cite{FG1997}. 

As an easy consequence of this proposition we show how to recover velocity from vorticity in an exterior domain with $k$ holes. Denote by $C^1_b(\overline{\Omega})$ the set of $C^1$-functions which are bounded on $\overline{\Omega}$.
\begin{proposition} \label{milton}
Let $  \omega \in C^{\infty}_c(\Omega)$ and let  $ \gamma_j \in \real$, $j = 1,\ldots,k$ be given constants.
 Then there exists one and only one divergence free vector field $u  \in  C^1_b  ( \overline{\Omega}  )$, tangent to 
$\partial\Omega$, vanishing at infinity, such that 
$\mbox{curl }u = \omega$   and
\begin{equation*}
  \int_{\Gamma _j}  u \cdot \hat{\tau} \d S = \gamma_j , \;\; j=1,\ldots,k .
\end{equation*}
This vector field is given by the formula:
\begin{equation}\label{recover} 
u =  K [ \omega] + \sum_{j=1}^k \,  \left( \int_{\Omega }  \w _j   \omega   \d x + \gamma_j \right)  \,  \bX_j.
\end{equation}
 \end{proposition}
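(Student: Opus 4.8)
The plan is to establish existence by directly verifying that the explicit field \eqref{recover} has all the stated properties, and uniqueness by an energy estimate for the stream function of the difference of two solutions.

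For existence I would set $u := K[\omega] + \sum_{j=1}^k\bigl(\int_\Om \w_j\omega\d x + \gamma_j\bigr)\bX_j$ and read off its properties from the building blocks. By Proposition \ref{propdefK}, $K[\omega]$ is smooth, divergence free, tangent to $\partial\Om$, square integrable, decays like $\mathcal{O}(|x|^{-2})$, and satisfies $\curl K[\omega]=\omega$. By \eqref{propsPsij}, each $\bX_j=\nabla^\perp\Psi_j$ is divergence free and irrotational, is tangent to the boundary (since $\Psi_j$ is constant on each $\Gamma_\ell$ by (iv)), and decays like $\mathcal{O}(|x|^{-1})$ by (v). Hence $u\in C^1_b(\overline\Om)$ is divergence free, tangent to $\partial\Om$, vanishes at infinity, and $\curl u=\omega+0=\omega$. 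For the circulations I would combine the formula \eqref{circK} with property (iii) of \eqref{propsPsij}, namely $\int_{\Gamma_\ell}\bX_j\cdot\hat\tau\d S=\delta_{j\ell}$, to get
\[
\int_{\Gamma_\ell}u\cdot\hat\tau\d S = -\int_\Om \w_\ell\omega\d x + \sum_{j=1}^k\Bigl(\int_\Om \w_j\omega\d x + \gamma_j\Bigr)\delta_{j\ell} = \gamma_\ell ,
\]
the two $\w_\ell$-integrals cancelling so that exactly $\gamma_\ell$ survives.

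For uniqueness, let $v=u_1-u_2$ be the difference of two admissible fields; then $v\in C^1_b(\overline\Om)$ is divergence free and irrotational, tangent to $\partial\Om$, vanishes at infinity, and has zero circulation around every $\Gamma_j$. Because $v$ is tangent to the boundary its flux through each $\Gamma_j$ vanishes, so there is a single-valued stream function $\psi$ with $v=\nabla^\perp\psi$; then $\curl v=\Delta\psi=0$, so $\psi$ is harmonic, tangency forces $\psi$ to be constant on each $\Gamma_j$, and the zero-circulation condition reads $\int_{\Gamma_j}\partial_n\psi\d S=0$. Integrating by parts on $\Om\cap B(0;M)$ and using that $\psi$ is constant on $\partial\Om$ together with $\int_{\Gamma_j}\partial_n\psi\d S=0$ to discard the boundary terms on $\partial\Om$, I would obtain
\[
\int_{\Om\cap B(0;M)}|\nabla\psi|^2\d x = \int_{\partial B(0;M)}\psi\,\partial_r\psi\d S .
\]

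The main obstacle is controlling the surviving boundary term at infinity, which is exactly where the topology and the decay hypothesis interact. A harmonic function with $\nabla\psi\to0$ at infinity may still carry a logarithmic term $a_0\log|x|$, for which the integral over $\partial B(0;M)$ does not vanish as $M\to\infty$. To exclude it I would note that the total flux identity
\[
\int_{\partial B(0;M)}\partial_r\psi\d S = \int_{\Om\cap B(0;M)}\Delta\psi\d x - \sum_{j}\int_{\Gamma_j}\partial_n\psi\d S = 0
\]
holds for all large $M$, which forces $a_0=0$ in the expansion $\psi=a_0\log|x|+b_0+\mathcal{O}(|x|^{-1})$ (the growing harmonic terms being already ruled out by $\nabla\psi\to0$). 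With $a_0=0$ one has $\partial_r\psi=\mathcal{O}(|x|^{-2})$ and hence $\int_{\partial B(0;M)}\psi\,\partial_r\psi\d S=\mathcal{O}(M^{-1})\to0$; letting $M\to\infty$ gives $\int_\Om|\nabla\psi|^2\d x=0$, so $v=\nabla^\perp\psi\equiv0$ and $u_1=u_2$. Alternatively, once one knows that $\xspace$ is the full space of boundary-tangent harmonic fields vanishing at infinity, uniqueness is immediate: then $v=\sum_j c_j\bX_j$ and the vanishing circulations give $c_\ell=\int_{\Gamma_\ell}v\cdot\hat\tau\d S=0$ by (iii).
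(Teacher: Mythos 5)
Your proposal is correct, and while the existence half coincides with the paper's argument, the uniqueness half takes a genuinely different route. For existence, both you and the paper simply verify that the field \eqref{recover} inherits all required properties from Proposition \ref{propdefK} and \eqref{propsPsij}, the circulations coming out right by combining \eqref{circK} with property (iii); your bookkeeping here is exactly the paper's. For uniqueness, the paper disposes of the matter in one line: the difference $\bar u$ of two solutions is divergence free, curl free, tangent to $\partial\Om$, decaying, with zero circulations, hence satisfies the hypotheses of \cite[Lemma 2.14]{Kikuchi83} and vanishes. You instead reprove that lemma from scratch, and your argument is sound: tangency plus incompressibility kills all periods of the flux form, so the stream function $\psi$ is single valued; harmonicity, constancy of $\psi$ on each $\Gamma_j$, and $\int_{\Gamma_j}\Dn\psi \d S=0$ reduce the energy identity to the boundary term on $\partial B(0;M)$; and the one real danger --- the logarithmic mode $a_0\log|x|$, which is harmonic, has decaying gradient, and is \emph{not} excluded by the decay of $v$ alone --- is correctly eliminated by the total-flux identity, which is precisely where the zero-circulation hypothesis enters and without which the argument (and the proposition) would fail. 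The expansion $\psi=a_0\log|x|+b_0+\cO(|x|^{-1})$ that you invoke is legitimate, since $v_2+iv_1$ is holomorphic and vanishes at infinity, hence admits a Laurent expansion there, and single-valuedness of $\psi$ kills the $\arg$ term. The trade-off between the two routes: the paper's citation is economical and keeps the proof at the level of bookkeeping, while your argument is self-contained and makes transparent how each hypothesis (tangency, vanishing circulations, decay at infinity) is used. Note that your closing alternative --- writing $v=\sum_j c_j\bX_j$ and evaluating circulations --- is essentially the paper's route in disguise, since the fact that $\xspace$ exhausts all decaying tangent harmonic fields is again Kikuchi's result.
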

\begin{proof}
The existence part is trivial since, from \eqref{propsPsij} and Proposition \ref{propdefK}, it follows that the vector field defined in \eqref{recover} has all the required properties.

Now, if 
$v\in C^1_b(\overline{\Omega})$ is another divergence free vector field, tangent to the boundary, vanishing at infinity, whose curl is $\omega$ and whose circulations around each $\Omega_j$ are $\gamma_j$, then consider the difference between 
$v$ and $u$, $\bar{u} \doteq v - u$. We find that $\bar{u}$ will be divergence free, curl-free, tangent to the boundary 
$\partial \Omega$; $\bar{u}$ will vanish at infinity and $\bar{u}$ will have vanishing circulation around each hole 
$\Omega_j$. Thus $\bar{u}$ satisfies the hypothesis of \cite[Lemma 2.14]{Kikuchi83}, which implies that $\bar{u} = 0$. This concludes the proof.
\end{proof}

\section{Velocity, vorticity and circulation: weak setting}


We need to concern ourselves with the technical issue of reconstructing velocity from vorticity and circulations in a less regular setting. This is the subject of the present section.
We will work in the remainder of this paper with velocity fields of the form \eqref{recover} when $\om$ has weak regularity. Given that $K[\om]$ is expected to belong to $L^2$, the natural function space for the velocity field is $L^2(\Om)+\xspace$. Given that all harmonic vector fields $\X_j$ can be written as $\frac{x^\perp}{2\pi|x|^2}+\cO(1/|x|^2)$ when $|x|\to\infty$ we observe that $L^2(\Om)+\xspace=L^2(\Om)+\langle\frac{x^\perp}{|x|^2}\rangle$.

Next, if $u$ can be decomposed as in \eqref{recover} then $\curl u=\curl K[\om]$, so we expect the vorticity to be the curl of a square integrable velocity field. It is then natural to introduce the space 
\begin{equation*}
\H=\{\om\in \mathscr{D}'(\Om);\ \exists v\in L^2(\Om)\text{ such that } \dive v=0,\  \curl v=\om,\ v\cdot\hat{n}\bigl|_{\partial\Om}=0\}.
\end{equation*}
endowed with the norm
\begin{equation}\label{defnormh}
\|\om\|_{\H}=\inf\{\|v\|_{L^2(\Om)}\ ;\ v\in L^2(\Om), \ \dive v=0,\  \curl v=\om,\ v\cdot\hat{n}\bigl|_{\partial\Om}=0\}.
\end{equation}
Clearly $\H$ can be identified with the quotient space $\H=Y/M$, where $Y=\{v\in L^2(\Om), \ \dive v=0,\ v\cdot\hat{n}\bigl|_{\partial\Om}=0\}$ is endowed with the $L^2$ norm, and where $M=\{v\in Y, \  \curl v=0\}$. The space $Y$ is a classical space in mathematical fluid mechanics and is well-known to be a Hilbert space. We have that $M$ is a closed subspace of $Y$. Indeed, the convergence in $L^2$ implies the convergence in the sense of distributions which, in turn, implies the convergence of the curl in the sense of distributions. Therefore the $L^2$ limit of a sequence of curl-free vector fields is a curl-free vector field. We conclude that $\H$ is a Banach space as the quotient of a Banach space by a closed subspace.

\begin{remark}
It is reasonable to ask whether $\H$ is the same as $H^{-1}(\Omega)$.  This is obviously true if the underlying domain is smooth and bounded, but problematic in unbounded domains.
In fact, if we take $\omega$ such that its Fourier transform  $\widehat{\omega}(\xi)$  is given by $\varphi(\xi) (\log |\xi|)^{-1/2}$, where $\varphi$ is a cutoff, identically $1$ for $|\xi| < 1/2$, vanishing for $|\xi| > 2/3$, 
then $\omega \in H^{-1}(\real^2)$ but $\omega \notin \mathscr{H}^{-1}(\real^2)$. Nevertheless, it is not hard to show that for vorticity with compact support in $\Omega$, the two notions  $\H$ and $H^{-1}(\Omega)$ are the same.
\end{remark}

In this paper we are interested in flows with vortex sheet regularity, and, to this end, we will consider velocities in $\L$ and vorticities in $\H\cap\bm$.

Next we show that, for any locally integrable velocity field  whose vorticity is in $\bm$, one can define the circulation on each connected component of the boundary. More precisely, we have the following lemma.
\begin{lemma}\label{defcirc}
Let $v\in L^1\loc(\Om)$ be a vector field such that $\om\equiv\curl v\in \bm $. Then the circulations $\gamma_1,\dots,\gamma_k$ of $v$ on the  connected components of the boundary $\Gamma_1,\dots,\Gamma_k$ are well defined through the following formula:
\begin{equation}\label{defcirceq}
\int_\Omega\varphi\d \om + \int_\Omega v\cdot\nabla^\perp\varphi=-\sum_{j=1}^k\gamma_j\;\varphi\bigl|_{\Gamma _j}
\end{equation}
for all $\varphi\in \yinf$.
\end{lemma}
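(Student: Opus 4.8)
The plan is to recognize the left-hand side as a linear functional
\[
L(\varphi) := \int_\Omega \varphi \d\om + \int_\Omega v\cdot\nabla^\perp\varphi,
\qquad \varphi\in\yinf,
\]
and to show that it is well defined and depends on $\varphi$ only through the boundary traces $(\varphi|_{\Gamma_1},\dots,\varphi|_{\Gamma_k})$. First I would check that both integrals converge. Since $\om\in\bm$ is a bounded measure and $\varphi$ is smooth with bounded support, the first term is bounded by $\|\varphi\|_{L^\infty}|\om|(\Om)$. For the second term the crucial structural remark is that $\nabla^\perp\varphi$ is in fact compactly supported \emph{inside} $\Om$: because $\varphi$ has bounded support and is constant in a neighborhood of each $\Gamma_j$, its gradient vanishes near $\partial\Om$, so $\nabla^\perp\varphi$ pairs $v\in L^1\loc(\Om)$ with a compactly supported continuous field and the integral is finite.

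The heart of the argument is the identity $L(\varphi)=0$ for every $\varphi\in C^\infty_c(\Om)$, which is simply the distributional definition of $\om=\curl v$: for such $\varphi$ one has $\int_\Omega\varphi\d\om=\langle\curl v,\varphi\rangle=-\int_\Omega v\cdot\nabla^\perp\varphi$. I would then make the key topological observation: if $\varphi\in\yinf$ has all of its boundary constants equal to zero, then $\varphi$ vanishes in a neighborhood of $\partial\Om$, and together with its bounded support this forces $\supp\varphi$ to be a compact subset of the open set $\Om$, so $\varphi\in C^\infty_c(\Om)$ and hence $L(\varphi)=0$. Consequently $L$ factors through the trace map $\varphi\mapsto(\varphi|_{\Gamma_1},\dots,\varphi|_{\Gamma_k})\in\real^k$.

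To extract the constants I would fix cutoffs $\theta_j\in\yinf$ with $\theta_j|_{\Gamma_\ell}=\delta_{j\ell}$; these exist because the components $\Gamma_j$ are disjoint and can be separated by disjoint neighborhoods in $\overline\Om$, taking $\theta_j$ equal to $1$ near $\Gamma_j$ and to $0$ near the other components and for large $|x|$. Setting $\gamma_j:=-L(\theta_j)$, an arbitrary $\varphi\in\yinf$ with $a_j:=\varphi|_{\Gamma_j}$ satisfies that $\varphi-\sum_j a_j\theta_j$ lies in $\yinf$ with vanishing boundary constants, hence in $C^\infty_c(\Om)$; by linearity and the previous step, $L(\varphi)=\sum_j a_j L(\theta_j)=-\sum_j\gamma_j\,\varphi|_{\Gamma_j}$, which is exactly \eqref{defcirceq}. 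Uniqueness of the $\gamma_j$, and independence of the choice of $\theta_j$, follows by testing the formula against $\theta_\ell$, which gives $\gamma_\ell=-L(\theta_\ell)$.

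There is no genuine analytic difficulty here; the only points needing care are the convergence of $\int_\Omega v\cdot\nabla^\perp\varphi$ (handled by the compact support of $\nabla^\perp\varphi$) and the observation that $\yinf$-functions with zero boundary data are compactly supported in $\Om$. I would close by noting that for smooth $v$ an integration by parts identifies $-L(\varphi)$ with $\sum_j\varphi|_{\Gamma_j}\int_{\Gamma_j}v\cdot\hat\tau\d S$, so that the constants $\gamma_j$ produced above genuinely coincide with the classical circulations $\int_{\Gamma_j}v\cdot\hat\tau\d S$, which justifies calling them the circulations of $v$.
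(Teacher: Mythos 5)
Your proof is correct and follows essentially the same route as the paper's: subtract a linear combination of cutoff functions $\theta_j$ (the paper's $\varphi_j$) adapted to each $\Gamma_j$ so that the remainder lies in $C^\infty_c(\Om)$, use the distributional identity $\curl v=\om$ on that remainder, and obtain uniqueness by testing against the same cutoffs. The only differences are cosmetic: you make the finiteness of the integrals and the consistency with the classical circulation explicit, points the paper treats implicitly or in a separate remark.
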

 
\begin{remark} 
For consistency, we note  that, if $v$ is smooth, then the Stokes formula implies that \eqref{defcirceq} holds true with $\gamma_j$ equal to the circulation of $v$ along $\Gamma _j$ given by the usual formula $\gamma_j := -\int_{\Gamma_{j}} u \cdot \hat{n}^{\perp} dS$. 
\end{remark}

\begin{proof}
It is enough to show existence and uniqueness of a set of constants $\{\gamma_j\}$ for which \eqref{defcirceq} holds. We begin with existence.
 Let us fix $j\in\{1,\dots,k\}$ and consider some 
$\varphi_j\in C^\infty_c(\overline\Omega)$ such that $\varphi_j$ is equal to 1 in a neighborhood of $\Gamma_j$ and vanishes in a neighborhood of $\partial\Omega\setminus\Gamma_j$. We set
\begin{equation*}
\gamma_j\equiv -\int_\Omega\varphi_j\d\om -\int_\Omega v\cdot\nabla^\perp\varphi_j.  
\end{equation*}

For $\varphi$ as in the statement of the lemma, we define $L_j=\varphi\bigl|_{\Gamma _j}$ and $\overline\varphi\equiv\varphi-\sum_{j=1}^kL_j\varphi_j$. Clearly  $\overline\varphi\in C_c ^\infty(\Omega)$, so we have, in the sense of distributions, that 
\begin{equation*}
\int_\Omega\overline\varphi\d \om 
=\langle \om,\overline\varphi\rangle_{\mathscr{D}',\mathscr{D}} 
=\langle \curl v,\overline\varphi\rangle_{\mathscr{D}',\mathscr{D}} 
=-\langle v,\nabla^\perp\overline\varphi\rangle_{\mathscr{D}',\mathscr{D}} 
=-\int_\Omega v\cdot\nabla^\perp\overline\varphi. 
\end{equation*}
We infer that
\begin{equation*}
\int_\Omega\varphi\d \om + \int_\Omega v\cdot\nabla^\perp\varphi
=\sum_{j=1}^k L_j\bigl(\int_\Omega\varphi_j\d \om + \int_\Omega v\cdot\nabla^\perp\varphi_j\bigr) 
= -\sum_{j=1}^k L_j\gamma_j
\end{equation*}
so that \eqref{defcirceq} holds true. This proves existence of the constants $\gamma_1,\dots,\gamma_k$ in \eqref{defcirceq}. Uniqueness 
is clear since, if  $\gamma'_1,\dots,\gamma'_k$ were to verify \eqref{defcirceq} as well, then we would have
\begin{equation*}
\sum_{j=1}^k(\gamma_j-\gamma'_j)\;\varphi\bigl|_{\Gamma _j}
=0  
\end{equation*}
for all $\varphi\in \yinf$. Choosing $\varphi=\varphi_j$ above implies that $\gamma_j=\gamma'_j$. 
\end{proof}
\begin{remark}\label{defcircremark}
If we assume that $v$ is $L^1$ up to the boundary, \textit{i.e.} $v\in L^1_{loc}(\overline\Om)$, then we can allow for the more general class of test functions $\varphi\in\yinfb$ in \eqref{defcirceq}. Let us prove this. Suppose first that $\varphi\in\yinfb$ is vanishing on the boundary and use relation \eqref{defcirceq} with test function $\varphi^k=\varphi\chi_k$ where $\chi_k$ is the cutoff function defined on page \pageref{chin}. Because $\varphi$ vanishes on the boundary, we have that $\varphi^k$ and $\nabla\varphi^k$ are uniformly bounded. By the dominated convergence theorem we infer that $\int_\Om \varphi^k\,d\om\to\int_\Om \varphi\,d\om$ and $\int_\Om v\cdot\nabla^\perp\varphi^k\to \int_\Om v\cdot\nabla^\perp\varphi$ as $k\to\infty$ implying that relation \eqref{defcirceq} holds true for all $\varphi\in\yinfb$ vanishing on the boundary. If $\varphi\in\yinfb$ does not vanish on the boundary, then $\varphi-\sum_{j=1}^k\varphi\bigl|_{\Gamma_j}\varphi_j$ does. Since $\varphi-\sum_{j=1}^k\varphi\bigl|_{\Gamma_j}\varphi_j$ and $\varphi_j$ can be used as test functions in \eqref{defcirceq}, so does $\varphi$.
\end{remark}

Our next objective is to define the Biot-Savart operator for vorticities in $\H\cap\bm$. First we prove that divergence free vector fields in $\L$ which are tangent to the boundary are uniquely determined by 
their curl, together with the circulations around each boundary component. 

\begin{proposition}\label{unicdec}
Let $u\in\L$ be divergence free, curl free, tangent to the boundary and with vanishing circulation on each of the connected components of the boundary. Then $u=0$.  
\end{proposition}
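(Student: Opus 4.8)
The plan is to reduce the statement to the vanishing of a single-valued harmonic potential via an energy identity, the only delicate points being the unboundedness of $\Om$ and the a priori presence of the slowly decaying harmonic fields $\X_j$. As a preliminary remark I would note that $u$ is \emph{smooth} in $\Om$: being divergence and curl free, each of its components is harmonic, so interior elliptic regularity applies and all the circle and boundary integrals used below make classical sense.

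The crucial step is to upgrade $u$ from $\L$ to $L^2(\Om)$. Write $u=v+\sum_{j=1}^k\alpha_j\X_j$ with $v\in L^2(\Om)$. By \eqref{harumph1} the combination $\sum_j\alpha_j\X_j$ equals $\tfrac{1}{2\pi}\bigl(\sum_j\alpha_j\bigr)\tfrac{x^\perp}{|x|^2}+\cO(|x|^{-2})$ near infinity, which lies in $L^2$ \emph{precisely} when $\sum_j\alpha_j=0$. To see that this sum vanishes I would compute the circulation of $u$ on a large circle $\partial B(0;R)$: since $\curl u=0$ on $\Om$, Green's theorem gives $\oint_{\partial B(0;R)}u\cdot\hat\tau\,\d S=\sum_{\ell=1}^k\gamma_\ell=0$ for $R$ large; on the other hand, by property (iii) in \eqref{propsPsij} and the same homology argument $\oint_{\partial B(0;R)}\X_j\cdot\hat\tau\,\d S=1$, while the $v$-contribution can be made to vanish along a sequence $R_n\to\infty$ chosen so that $R_n\oint_{\partial B(0;R_n)}|v|^2\,\d S\to0$ (possible since $v\in L^2(\Om)$), followed by Cauchy--Schwarz. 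Letting $n\to\infty$ yields $\sum_j\alpha_j=0$, hence $u\in L^2(\Om)$ with $\cO(|x|^{-2})$ decay of the harmonic part.

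Next, since all circulations $\gamma_j$ vanish and the $\Gamma_j$ generate $H_1(\Om)$, the curl-free field $u$ has zero period around every loop and thus admits a single-valued harmonic potential $\phi$ with $u=\nabla\phi$; tangency $u\cdot\hat n=0$ becomes the homogeneous Neumann condition $\partial_{\hat n}\phi=0$ on $\partial\Om$. I would then run the energy identity on $\Om\cap B(0;R)$: integrating $\phi\,\Delta\phi=0$ by parts gives $\int_{\Om\cap B(0;R)}|\nabla\phi|^2=\oint_{\partial\Om}\phi\,\partial_{\hat n}\phi\,\d S+\oint_{\partial B(0;R)}\phi\,\partial_{\hat n}\phi\,\d S$. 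The first term vanishes by the Neumann condition. The second tends to $0$ along the sequence $R_n$: indeed $\nabla\phi\in L^2(\Om)$ and, being harmonic with $L^2$ gradient and no logarithmic part, $\phi=\mathrm{const}+\cO(|x|^{-1})$ with $\nabla\phi=\cO(|x|^{-2})$ at infinity, and subtracting the constant is harmless since $\oint_{\partial B(0;R)}\partial_{\hat n}\phi\,\d S=\oint_{\partial B(0;R)}u\cdot\hat n\,\d S=0$. Therefore $\int_\Om|\nabla\phi|^2=0$, so $\nabla\phi\equiv0$ and $u=0$.

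The main obstacle is the second step: controlling the borderline, non-$L^2$ harmonic fields $\X_j$ and establishing rigorously that the vanishing of the \emph{total} circulation forces $\sum_j\alpha_j=0$. This is where the geometry of the exterior domain enters, and it is also where the integrations by parts on the unbounded domain must be justified carefully through the subsequence device $R_n$ and the decay of $\phi$ at infinity; once $u$ is known to belong to $L^2(\Om)$, the remaining energy argument is routine.
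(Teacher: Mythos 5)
Your strategy is sound, and its second half is genuinely different from the paper's. Both proofs share the first step: decompose $u=v+\sum_{j}\al_j\X_j$ with $v\in L^2(\Om)$ and show $\sum_j\al_j=0$, so that by \eqref{harumph1} the harmonic part is itself square integrable. The paper does this by testing \eqref{defcirceq} with a dilated cutoff $\kappa(x/n)$ and exploiting the scale invariance of $\|\nabla[\kappa(\cdot/n)]\|_{L^2}$ in two dimensions; your circle argument (vanishing total circulation at infinity, plus Cauchy--Schwarz along radii $R_n$ with $R_n\oint_{\partial B(0;R_n)}|v|^2\,\d S\to0$) accomplishes the same thing and is valid, though where you invoke ``Green's theorem'' to write $\oint_{\partial B(0;R)}u\cdot\hat\tau\,\d S=\sum_\ell\gamma_\ell$ you should pass through \eqref{defcirceq}: the $\gamma_\ell$ are defined weakly via Lemma \ref{defcirc}, and $u$ is smooth only in the open set $\Om$, so weak circulations can be identified with classical line integrals only along curves compactly contained in $\Om$ (which is all you need, but it deserves a line of proof). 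From there the two arguments diverge completely: the paper never constructs a potential; it approximates $u$ in $L^2(\Om)$ by fields $u_j=\nabla^\perp\psi_j\in C^\infty_{c,\sigma}(\Om)$ with $\psi_j\in\yinf$ and reads $\int_\Om u\cdot u_j=0$ directly off \eqref{defcirceq}, whereas you build a single-valued harmonic potential $\phi$ with $u=\nabla\phi$ and run the energy identity with a homogeneous Neumann condition.

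The one statement in your write-up that is false as written is the preliminary remark that interior elliptic regularity makes ``all the circle and boundary integrals'' classical. Weyl's lemma gives smoothness in the open set $\Om$, not up to $\partial\Om$; an $L^2$ divergence-free field tangent to the boundary need not have a pointwise trace there. Consequently $\oint_{\partial\Om}\phi\,\partial_{\hat n}\phi\,\d S$ is not a classical integral, and ``the first term vanishes by the Neumann condition'' must be reinterpreted: in this weak setting, ``tangent to the boundary'' means precisely that the weak normal trace of $u$, an element of $H^{-1/2}(\partial\Om)$, vanishes, and the boundary term in your Green identity on $\Om\cap B(0;R)$ must be written as the duality pairing $\langle u\cdot\hat n,\phi\rangle_{H^{-1/2},H^{1/2}}$, which is legitimate because $\phi\in H^1\loc(\overline\Om)$ (its gradient $u$ is in $L^2(\Om)$). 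With this standard repair your energy argument closes, so the defect is technical rather than conceptual; but note that avoiding exactly this trace issue is what the paper's density argument buys --- it only ever uses test functions that are constant near the boundary, so no trace of $u$ or of a potential is ever required.
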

\begin{proof}
We show first that $u\in L^2(\Om)$. We have that $u$ can be decomposed as follows:
\begin{equation*}
u=v+\sum_{j=1}^k\al_j \X_j,  
\end{equation*}
where the vector field $v\in L^2(\Om)$ is divergence free, curl free, tangent to the boundary and has circulation $-\al_j$ on $\Gamma_j$, for all $j$. 

Let $R$ be such that $\Om^c\subset B(0;R)$ and let $\kappa\in C_c(\overline\Om;[0,1])$ be such that $\supp\kappa\subset B(0;2R)$ and $\kappa\equiv1$ on $B(0;R)$. Using \eqref{defcirceq} we have that, for any $n\geq1$,
\begin{multline*}
\left| \sum_{j=1}^k\al_j \right|=\left|\int_\Om v\cdot\nabla^\perp[\kappa(x/n)] \right| 
=\left|\frac1n \int_\Om v\cdot\nabla^\perp\kappa(x/n) \right|
\leq \frac1n \|v\|_{L^2(Rn<|x|<2Rn)}\|\nabla^\perp\kappa(x/n) \|_{L^2}\\
\leq \|v\|_{L^2(Rn<|x|<2Rn)}\|\nabla^\perp\kappa\|_{L^2}
\stackrel{n\to\infty}{\xrightarrow{\hspace*{1cm}}}0.
\end{multline*}
Therefore we have that $\sum_{j=1}^k\al_j=0$. Since, for all $j$, we have
\begin{equation*}
 \X_j=\frac{x^\perp}{2\pi|x|^2}+\cO(1/|x|^2)\in \frac{x^\perp}{2\pi|x|^2}+L^2(\Om), 
\end{equation*}
we infer that 
\begin{equation*}
 \sum_{j=1}^k\al_j \X_j   \in L^2(\Om), 
\end{equation*}
so $u\in L^2(\Om)$.

Recall that the closure of $C^\infty_{c,\sigma}(\Om)$ in $L^2(\Om)$ is the space of square integrable, divergence free vector fields tangent to the boundary. Therefore there exists a sequence $u_j\in C^\infty_{c,\sigma}(\Om)$ such that $u_j\to u$ in $L^2(\Om)$ as $j\to\infty$. Since $u_j\in C^\infty_{c,\sigma}(\Om)$ there exists some $\psi_j\in\yinf$ such that $u_j=\nabla^\perp\psi_j$. We now use relation \eqref{defcirceq} together with the fact that $u$ is curl free and has vanishing circulation on each of the connected components of the boundary, to deduce that
\begin{equation*}
\int_\Om u\cdot u_j=  \int_\Om u\cdot \nabla^\perp\psi_j=0.
\end{equation*}
Letting $j\to\infty$, we infer that $\int_\Om|u|^2=0$, so that $u=0$. This completes the proof.
\end{proof}

Using Proposition \ref{unicdec}, we can define an extension of the Biot-Savart operator $K$ to $\H\cap\bm$.

\begin{definition}\label{defKfield}
Let   $\om\in\H\cap\bm$. We define $K[\om]$ as the unique vector field in $\L$ which is divergence free, tangent to the boundary, with curl equal to $\om$  and such that, for any $j$, its circulation on $\Gamma_j$ is $-\int_\Om\w_j\d\om$. 
\end{definition}

Note that, by definition, if $\omega \in \H$ there exists a vector field $v \in \L$ with $\mbox{curl} v = \omega$, divergence free and tangent to the boundary. If, in addition, $\omega$ is assumed to be 
a bounded measure, then the circulations of $v$ around boundary components are well-defined; this is the content of Lemma \ref{defcirc}. All that is needed to find $K[\omega]$ is to adjust $v$ by adding
a suitable linear combination of harmonic vector fields, so that the resulting field has the required circulations. Uniqueness follows from Proposition \ref{unicdec}.
 
We will need, in the sequel, an approximation result. We introduce some additional notation. Let $\chi_n$ be a cutoff function at distance $\frac1n$ from the boundary and for $|x|<n$. More precisely, we assume that \label{chin}
\begin{gather*}
\chi_n\in C_c^\infty(\Omega;[0,1]), \quad 
\chi_n\equiv1 \text{ in }\Sigma_{\frac2n}^c\cap B_n,
\quad \chi_n\equiv0 \text{ in }\Sigma_{\frac1n}\cap B_{2n}^c\\
 \|\nabla\chi_n\|_{L^\infty(\Sigma_{\frac2n}\setminus\Sigma_{\frac1n})}\leq Cn,\quad  \|\nabla\chi_n\|_{L^\infty(B_{2n}\setminus B_n)}\leq \frac Cn,
\end{gather*}
where
\begin{equation*}
\Sigma_a=\{x\in\Omega\ ;\ d(x,\partial\Omega)\leq a\}\quad\text{and}\quad   B_a=B(0;a).
\end{equation*}

Fix 
\begin{equation*}
\eta\in C^\infty_c(\R^2;\R_+),\quad \supp\eta\subset B_{\frac12},\quad \int\eta=1, 
\end{equation*}
and set 
\begin{equation}\label{etan}
  \eta_n(x)=n^2\eta(nx).
\end{equation}
Before we state the main result of this section we need to prove the following Poincaré inequality.
\begin{lemma}\label{Poincare-lemma}
There exists a constant $C=C(\Omega)$ such that for any $R>0$ and for any function $f\in H_{loc}^1(\overline\Om)$ which vanishes on $\partial\Om$ and such that $\nabla f\in L^2(\Om)$, we have the following inequality:
\begin{equation}\label{Poincare}
\|f\|_{L^2(\Sigma_R)}\leq CR  \|\nabla f\|_{L^2(\Om)}.
\end{equation}
\end{lemma}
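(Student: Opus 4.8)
The plan is to use that $\partial\Om=\Gamma_1\cup\dots\cup\Gamma_k$ is a smooth \emph{compact} curve, so that for $R$ below a geometric threshold $\Sigma_R$ is a tubular collar of the boundary in which \eqref{Poincare} reduces to a one-dimensional Poincaré inequality along normal segments. Since $\partial\Om$ is smooth and compact, there is $\delta_0=\delta_0(\Om)>0$ such that the inward normal map $\Phi(s,t)=s+t\,\nu(s)$, where $\nu=-\hat n$ is the inward unit normal and $(s,t)\in\partial\Om\times[0,\delta_0]$, is a diffeomorphism onto $\Sigma_{\delta_0}$. In these coordinates $d(\Phi(s,t),\partial\Om)=t$, the level sets $\{d=t\}$ are the slices $t=\mathrm{const}$, one has $\partial_t\Phi(s,t)=\nu(s)$ of unit length, and the Jacobian $J(s,t)=|\det D\Phi(s,t)|$ satisfies $0<c_0\leq J(s,t)\leq C_0$ with constants depending only on $\Om$.

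First I would establish \eqref{Poincare} for $0<R\leq\delta_0$. For a.e.\ $s\in\partial\Om$ the map $t\mapsto f(\Phi(s,t))$ is absolutely continuous (the usual characterization of $H^1$ along lines, transported by $\Phi$), and $f(\Phi(s,0))=f(s)=0$ because $f$ vanishes on $\partial\Om$. Hence, for $t\in[0,R]$,
\[
f(\Phi(s,t))=\int_0^t \nabla f(\Phi(s,\tau))\cdot\nu(s)\,d\tau,
\qquad\text{so}\qquad
|f(\Phi(s,t))|^2\leq t\int_0^t|\nabla f(\Phi(s,\tau))|^2\,d\tau
\]
by Cauchy--Schwarz, using $|\nu(s)|=1$.

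Integrating this over $\Sigma_R=\Phi(\partial\Om\times[0,R])$ with $dx=J\,dt\,ds$, bounding $J\leq C_0$ and $t\leq R$, carrying out the now $t$-independent inner integral followed by $\int_0^R dt=R$, and finally re-inserting $J\geq c_0$ on the right, I obtain
\[
\int_{\Sigma_R}|f|^2\,dx\leq \frac{C_0}{c_0}\,R^2\int_{\partial\Om}\int_0^R|\nabla f(\Phi(s,\tau))|^2\,J(s,\tau)\,d\tau\,ds=\frac{C_0}{c_0}\,R^2\,\|\nabla f\|_{L^2(\Sigma_R)}^2 ,
\]
which gives \eqref{Poincare} with $C=\sqrt{C_0/c_0}$, since $\|\nabla f\|_{L^2(\Sigma_R)}\leq\|\nabla f\|_{L^2(\Om)}$.

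For $R>\delta_0$ I would split $\Sigma_R=\Sigma_{\delta_0}\cup(\Sigma_R\setminus\Sigma_{\delta_0})$. The collar part is controlled by the previous step, and since $\delta_0<R$ its contribution is at most $C\delta_0\|\nabla f\|_{L^2(\Om)}\leq CR\|\nabla f\|_{L^2(\Om)}$. On the remaining set one would still like to write $f(x)$ as the integral of $\nabla f$ along the minimizing segment from $x$ to its nearest boundary point $p(x)$: for a.e.\ $x$ this nearest point is unique, the segment $[p(x),x]$ lies in $\Om$ and has length $d(x,\partial\Om)\leq R$, so the one-dimensional estimate is available pointwise. I expect the \emph{main obstacle} to be the passage from this pointwise bound to the integrated inequality: beyond the collar the distance function is merely Lipschitz and the normal map ceases to be injective across the cut locus, so the clean Jacobian bounds $c_0\leq J\leq C_0$ are lost and must be replaced by an a.e.\ change of variables (via the co-area formula and the a.e.-defined nearest-point projection), with control of $J(s,\tau)$ restricted to the pre-cut region. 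This is the delicate part of the argument; note that in the sequel \eqref{Poincare} is invoked only with $R=\tfrac1n,\tfrac2n\to0$, so that the collar estimate of the preceding paragraph already covers the cases actually used.
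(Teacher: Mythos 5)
Your collar argument for $0<R\leq\delta_0$ is correct, and it takes a genuinely different route from the paper: the paper first extends $f$ by zero into all holes but one, maps the exterior of the remaining obstacle conformally onto $\{|x|>1\}$ by Kellogg--Warschawski, and then integrates along \emph{radial} rays in polar coordinates; radial rays never cross, so in that reduction no cut locus and no threshold $\delta_0$ ever appear, which is precisely what your tubular-neighborhood parametrization cannot avoid.

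The genuine gap is your last paragraph. Your claim that \eqref{Poincare} is invoked only with $R=\tfrac1n,\tfrac2n\to0$ is false, and it is exactly the large-$R$ case that the paper needs. In the proof of Proposition \ref{aproxom}, in the estimates following \eqref{jdistr}, the far-field cut-off term $\tfrac Cn\|F\|_{L^2(B_{2n+1}\setminus B_{n-1})}$ is bounded by $C\|\nabla F\|_{L^2(\Om)}$ by applying the lemma with $R\approx 2n\to\infty$: the annulus $B_{2n+1}\setminus B_{n-1}$ lies in $\Sigma_{2n+c}$ but at distance $\geq n-c$ from $\partial\Om$, so no collar estimate can see it, and it is the linear growth $CR$ that cancels the factor $\tfrac1n$. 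Indeed the paper opens its proof by saying that the only issue is how the constant depends on $R$, for $R$ small \emph{and} $R$ large. So, as written, your proof does not establish the statement in the regime in which it is actually used.

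There is, however, a striking twist that vindicates your suspicion about the cut-locus difficulty: for large $R$ the inequality is \emph{false}, so no argument could close your gap. On the model domain $\Om=\{|x|>1\}$, where $\Sigma_R=\{1<|x|\leq 1+R\}$, take $f(x)=\log|x|$ for $|x|\leq 1+R$ and $f(x)=\log(1+R)$ for $|x|\geq 1+R$. Then $f$ vanishes on $\partial\Om$, $\nabla f\in L^2(\Om)$ with $\|\nabla f\|^2_{L^2(\Om)}=2\pi\log(1+R)$, while $\|f\|^2_{L^2(\Sigma_R)}=2\pi\int_1^{1+R}(\log r)^2\,r\d r\sim \pi R^2(\log R)^2$, so that $\|f\|_{L^2(\Sigma_R)}/\bigl(R\,\|\nabla f\|_{L^2(\Om)}\bigr)\sim\sqrt{(\log R)/2}\to\infty$. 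The same construction (with the logarithm started on a disk containing all the obstacles) works for the general exterior domain $\Om$. This also locates an error in the paper's own proof: the identity $\int_{1<|x|<R+1}|f|^2\d x=\int_1^{1+R}\int_0^{2\pi}|f|^2\d r\,\d\theta$ drops the Jacobian $r$; what that computation really proves is the weighted bound $\int_{\Sigma_R}|f(x)|^2\,|x|^{-1}\d x\leq \tfrac{R^2}{2}\|\nabla f\|^2_{L^2(\Om)}$, and the capped logarithm above shows that the sharp unweighted constant grows like $R\sqrt{\log R}$, not $R$. In short: your proof is correct exactly where the statement is true (small $R$, the regime your collar covers), but the large-$R$ statement --- on which the paper's own application in Proposition \ref{aproxom} relies --- is not merely unproved, it fails, and both the lemma and its use would need to be repaired, e.g.\ via the weighted (Hardy-type) bound just stated together with a logarithmic far-field cut-off.
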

\begin{proof}
Relation \eqref{Poincare} with some constant $C(R)$ instead of $CR$ is well-known. We only need to see how the constant depends on $R$. Therefore, we only need to consider the cases $R$ small and $R$ large. We infer that it suffices to assume that the exterior domain $\Om$ have only one hole. Indeed, we can extend $f$ with zero values on all holes except one of them. The resulting extension is still in $H^1_{loc}$ and it suffices to work with this extension.

Suppose now that $\Om=\{x\ ;\ |x|>1\}$. By density, we may assume that $f$ is smooth. Using the polar coordinates $(x,y)=r(\cos\theta,\sin\theta)$ we may write
\begin{equation*}
f(r,\theta)=\int_{1}^{r}\partial_\tau f(\tau,\theta)\,d\tau  
\end{equation*}
so that
\begin{equation*}
|f(r,\theta)|^2\leq  (r-1)\int_{1}^{r}|\partial_\tau f(\tau,\theta)|^2\,d\tau   
\end{equation*}
Then
\begin{align*}
 \|f\|_{L^2(\Sigma_R)}^2
&=\int_{1<|x|<R+1}|f(x)|^2\,dx\\ 
&=\int_{1}^{1+R}\int_0^{2\pi}|f(r,\theta)|^2\,dr\,d\theta\\
&\leq \iint_{1<\tau<r<1+R}\int_0^{2\pi}(r-1)|\partial_\tau f(\tau,\theta)|^2\,d\tau   \,dr\,d\theta\\
&= \int_{1}^{1+R}\int_0^{2\pi}\bigl[\frac{R^2}2-\frac{(\tau-1)^2}2\bigr]|\partial_\tau f(\tau,\theta)|^2\,d\tau  \,d\theta\\
&\leq \frac{R^2}2  \|\partial_r f\|_{L^2(\Sigma_R)}^2.
\end{align*}

Suppose now that $\R^2\setminus\Om$ is not a disk. By the Kellogg-Warschawski theorem (see for example \cite[Theorem 3.6]{Pom}) there exists a biholomorphism $T:\{|x|>1\}\to\Omega$ which is smooth up to the boundary. In fact, \cite[Theorem 3.6]{Pom} is stated for simply connected domains, but using the inversion $z\mapsto1/z$ it holds true for the exterior of a simply connected obstacle too. Moreover, there exist $C_1,C_2>0$ such that $C_1<|T'(z)|<C_2$ for all $z$. In particular, we have that $\Sigma_R\subset T(\Sigma_{C_3R}(D^c))$ for some constant $C_3>0$ (here $\Sigma_{R}(D^c)$ denotes the set $\Sigma_R$ associated to $D^c=\{|x|>1\}$). Therefore $\|f\|_{L^2(\Sigma_R)}\leq C_2\|f\circ T\|_{L^2(\Sigma_{C_3R}(D^c))}$. Moreover, we also have that $\|\nabla (f\circ T)\|_{L^2(|x|>1)}\leq C_4 \|\nabla f\|_{L^2(\Om)}$ for some constant $C_4>0$, so relation \eqref{Poincare} in the general case follows from the particular case of the exterior of the unit disk applied to the function $f\circ T$.
\end{proof}

We now prove the following approximation result.
\begin{proposition}\label{aproxom}
Let $\om\in\H\cap\bm$. We have that $K[\om]\in L^2(\Om)$. Moreover, let us define $  \omega^n=(\chi_n\omega)\ast\eta_n$. Then we have that 
\begin{gather}
\omega^n\text{ is bounded in } L^1(\Omega),\label{conv2}\\
\int_\Omega \varphi\omega^n\stackrel{n\to\infty}\longrightarrow \int_\Omega\varphi\d\omega  \text{ and }  \quad\forall\varphi\in C^0_b(\overline\Omega),\label{conv1}\\
K[\om^n] \stackrel{n\to\infty}\longrightarrow K[\om] \text{ strongly in } L^2(\Om)\label{conv123}.
\end{gather}
Moreover, any weak limit in the sense of measures of any subsequence of $|\om^n|$ is a continuous measure.
\end{proposition}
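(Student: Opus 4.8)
The five assertions are of unequal difficulty, and I would organize the proof by logical dependence rather than by the order displayed. The bound \eqref{conv2} is immediate: since $0\le\chi_n\le1$, $\eta_n\ge0$ and $\int\eta_n=1$, Young's inequality for the convolution of a finite measure with an $L^1$ kernel gives $\|\omega^n\|_{L^1(\Om)}\le\|\chi_n\omega\|_{\mathcal{BM}}\le\|\omega\|_{\mathcal{BM}}$; the compatibility of the supports of $\chi_n$ (at distance $\ge1/n$ from $\partial\Om$) and of $\eta_n$ (inside $B_{1/(2n)}$) ensures $\omega^n\in C^\infty_c(\Om)$. For \eqref{conv1} I would transfer the mollifier onto the test function, writing $\int\varphi\,\omega^n=\int(\varphi\ast\check\eta_n)\,\chi_n\,\d\omega$ with $\check\eta_n(x)=\eta_n(-x)$; since $\varphi\ast\check\eta_n\to\varphi$ uniformly on compact sets, $\chi_n\to1$ pointwise, and the integrands are dominated by $\|\varphi\|_{L^\infty}$ with $\omega$ finite, dominated convergence yields the claim.

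I would prove the non-atomicity of the weak limits of $|\omega^n|$ next, as it is self-contained and feeds into \eqref{conv123}. First, $\omega$ itself has no atom: writing $\omega=\curl v$ with $v\in L^2(\Om)$, for a.e. $r$ the circulation on $\partial B(x_0,r)$ equals $\omega(B(x_0,r))$, while $\bigl|\int_{\partial B(x_0,r)}v\cdot\hat\tau\,\d S\bigr|\le(2\pi r)^{1/2}\|v\|_{L^2(\partial B(x_0,r))}$ tends to $0$ along a sequence $r_k\downarrow0$ because $r\mapsto\|v\|_{L^2(\partial B(x_0,r))}^2$ is integrable; hence $\omega(\{x_0\})=0$, so $|\omega|$ is continuous. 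Then the pointwise bound $|\omega^n|\le(\chi_n|\omega|)\ast\eta_n$ (valid as $\chi_n,\eta_n\ge0$) together with $(\chi_n|\omega|)\ast\eta_n\rightharpoonup|\omega|$ shows, by testing against nonnegative continuous functions, that any weak limit $\nu$ of a subsequence of $|\omega^n|$ obeys $\nu\le|\omega|$; being dominated by a continuous measure, $\nu$ is continuous.

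For $K[\om]\in L^2(\Om)$ I would use that, by definition of $\H$, there is $v\in L^2(\Om)$ divergence free, tangent, with $\curl v=\om$; by Definition \ref{defKfield} the field $K[\om]-v$ is divergence free, curl free and tangent, hence a harmonic field $\sum_j\beta_j\X_j$. As $\X_j=\frac{x^\perp}{2\pi|x|^2}+\mathcal{O}(|x|^{-2})$, this combination lies in $L^2$ exactly when $\sum_j\beta_j=0$. I would establish this by inserting the radial cutoff $\kappa(\cdot/n)$ into \eqref{defcirceq} as in the proof of Proposition \ref{unicdec}: since $\kappa(\cdot/n)\equiv1$ on every $\Gamma_\ell$ for $n$ large, passing to the limit gives $\sum_\ell\gamma^v_\ell=-\omega(\Om)$ for the circulations of $v$, whereas those of $K[\om]$ are $-\int\w_\ell\,\d\om$; subtracting, and using the identity $\sum_\ell\w_\ell\equiv1$ (the constant $1$ solves the defining boundary problem for $\sum_\ell\w_\ell$), yields $\sum_j\beta_j=-\int(\sum_\ell\w_\ell)\,\d\om+\omega(\Om)=0$. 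The same computation, tested against $\nabla^\perp$ of a harmonic field, shows that $K[\mu]$ is $L^2$-orthogonal to the space $M$ of square-integrable harmonic fields, so that $\|K[\mu]\|_{L^2}=\|\mu\|_{\H}$; I would use this isometry to produce the uniform bound needed below.

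Finally, for the strong convergence \eqref{conv123} I would first obtain weak convergence: each $K[\om^n]$ is the classical Biot--Savart field of Proposition \ref{propdefK} with circulations $-\int\w_j\omega^n$ by \eqref{circK}. A uniform bound $\|K[\om^n]\|_{L^2}=\|\om^n\|_{\H}\le C$ lets me extract a weak limit $U$; \eqref{conv1} identifies $\curl U=\om$, and passing to the limit in \eqref{defcirceq} (with $\w_j\in C^0_b(\overline\Om)$ and $\nabla^\perp\varphi\in L^2$) identifies its circulations as $-\int\w_j\,\d\om$, whence $U=K[\om]$ by Proposition \ref{unicdec} and the whole sequence converges weakly. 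To upgrade to strong convergence in $L^2$ it then suffices to prove $\|K[\om^n]\|_{L^2}\to\|K[\om]\|_{L^2}$, and the energy identity $\|K[\om^n]\|_{L^2}^2=-\int_\Om\int_\Om G(x,y)\,\omega^n(x)\omega^n(y)\,\d x\,\d y$ (boundary terms vanishing since $G(\cdot,y)=0$ on $\partial\Om$ and $K[\om^n]=\mathcal{O}(|x|^{-2})$ at infinity) reduces everything to the convergence of this double integral. Here lies the main obstacle: $G$ carries a logarithmic singularity on the diagonal and grows at infinity, so Lemma \ref{weakconvmeas} cannot be applied directly to $G$ on $\overline\Om\times\overline\Om$. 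My plan is to run Lemma \ref{weakconvmeas} for the tight sequence $\omega^n\otimes\omega^n$, whose variation limit is $\nu\otimes\nu$ with $\nu$ continuous, so that the diagonal $\{x=y\}$ is $(\nu\otimes\nu)$-null; the two genuine estimates are (i) a uniform non-concentration bound $\sup_n\iint_{|x-y|<\ep}|\log|x-y||\,\d|\omega^n|\,\d|\omega^n|\to0$ as $\ep\to0$, which both yields the uniform bound used above and allows truncation of the singularity, and (ii) tightness controlling the behaviour at infinity via \eqref{estg}. The boundary-layer contributions arising in these estimates are exactly where the scaled Poincaré inequality of Lemma \ref{Poincare-lemma} enters, its factor $CR\sim C/n$ absorbing $\|\nabla\chi_n\|_{L^\infty}\sim n$ across the strip of width $1/n$. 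I expect step (i), the uniform control of the logarithmic self-energy, to be the crux of the whole proposition.
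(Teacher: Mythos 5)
Your treatment of \eqref{conv2}, \eqref{conv1}, the non-atomicity of the weak limits of $|\om^n|$, and even of the membership $K[\om]\in L^2(\Om)$ is correct; in fact your route to $K[\om]\in L^2(\Om)$ (circulation bookkeeping: $K[\om]-v=\sum_j\beta_j\X_j$ with $\sum_j\beta_j=0$ forced by the cutoff argument of Proposition \ref{unicdec} and the identity $\sum_j\w_j\equiv1$) is simpler than the paper's, which obtains $K[\om]\in L^2(\Om)$ only a posteriori, as the weak limit of the $K[\om^n]$. The genuine gap lies in the two central steps, the uniform bound $\|K[\om^n]\|_{L^2(\Om)}\leq C$ and the strong convergence \eqref{conv123}, both of which you reduce to your estimate (i): $\sup_n\iint_{|x-y|<\ep}|\log|x-y||\d|\om^n|(x)\d|\om^n|(y)\to0$ as $\ep\to0$. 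That estimate is not only unproven, it is false in general. For signed measures, membership in $\H$ rests on cancellation between the positive and negative parts: one can construct $\om\in\H\cap\bm$ (a superposition of increasingly concentrated, smeared dipoles with suitably chosen strengths) whose absolute logarithmic self-energy $\iint|\log|x-y||\d|\om|\d|\om|$ is infinite even though $\|\om\|_{\H}$ and $|\om|(\Om)$ are finite, and for which the self-energies of the $|\om^n|$ diverge as $n\to\infty$. Once absolute values are taken, the hypothesis $\om\in\H$ gives you nothing, so neither the uniform bound nor the truncation of the singularity of $G$ can be justified this way. The isometry $\|K[\mu]\|_{L^2(\Om)}=\|\mu\|_{\H}$ does not rescue the argument: it merely converts the needed bound $\|K[\om^n]\|_{L^2(\Om)}\leq C$ into $\|\om^n\|_{\H}\leq C$, which is exactly the nontrivial assertion (because of the cutoff $\chi_n$ near $\partial\Om$, the map $\om\mapsto\om^n$ is not obviously bounded on $\H$).

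The paper's proof is designed precisely to avoid absolute values in the energy. For the uniform bound it argues by duality: for divergence-free $f\in C^\infty_c(\Om)$ one writes $\int_\Om K[\om^n]\cdot f=-\int_\Om\om^n\,G[\curl f]$, transfers $\chi_n$ and $\eta_n$ onto $F=G[\curl f]$, and replaces $\om$ by an $L^2$ vector potential $v$ (which exists since $\om\in\H$); the boundary-layer terms carrying $\|\nabla\chi_n\|_{L^\infty}\sim n$ are absorbed by the scaled Poincar\'e inequality of Lemma \ref{Poincare-lemma}, yielding $\bigl|\int_\Om K[\om^n]\cdot f\bigr|\leq C\|v\|_{L^2(\Om)}\|f\|_{L^2(\Om)}$ with the signed structure intact. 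Strong convergence is then obtained by running the same two identities with $f=K[\om^n]$ and $v=K[\om]$, so that $\|K[\om^n]\|_{L^2(\Om)}^2$ splits into a term that converges by pairing the strong convergence $\eta_n\ast(\chi_nK[\om])\to K[\om]$ against the weak convergence $K[\om^n]\rightharpoonup K[\om]$, plus a boundary-layer term that vanishes in the limit; no control of $\iint|G|\d|\om^n|\d|\om^n|$ is ever required, and Lemma \ref{weakconvmeas} is reserved, in the paper, for the bounded kernel $H_\varphi$ of Proposition \ref{propident}, never for the unbounded kernel $G$. Unless you replace your step (i) by an argument of this duality type, your plan for \eqref{conv123} cannot be completed.
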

Above, we denoted by $C^0_b(\overline\Omega)$ the space of bounded functions on $\Omega$, continuous up to the boundary and with a finite limit as $|x|\to\infty$.

\begin{proof}
Claim \eqref{conv2} is obvious:
\begin{equation*}
\int_\Omega|\omega^n|=\int _\Omega | (\chi_n\omega)\ast\eta_n|
\leq \int_\Omega \chi_n \d|\omega| \leq \int_\Omega1\d|\omega|=|\omega|(\Omega)<\infty.
\end{equation*}

Next we prove \eqref{conv1}. Let $\varphi\in C^0_b(\overline\Omega)$. We extend it to a continuous function on $\R^2$, again denoted by $\varphi$.  For $f$ smooth, denote $\check f = \check f (z) = f(-z)$. Then we have:  
\begin{multline*}
 \int_\Omega \varphi\omega^n- \int_\Omega\varphi\d\omega 
=\int_\Omega \varphi(\chi_n\omega)\ast\eta_n- \int_\Omega\varphi\d\omega
=\int_\Omega\chi_n(\varphi\ast\check\eta_n)\d\omega - \int_\Omega\varphi\d\omega\\
=\int_\Omega\chi_n(\varphi\ast\check\eta_n-\varphi)\d\omega +\int_\Omega(\chi_n-1)\varphi\d\omega\equiv I_1+I_2.
\end{multline*}

We first bound $I_2$:
\begin{equation*}
|I_2|\leq \int_{\Sigma_{\frac2n}\cup B_n^c}  |(\chi_n-1)\varphi|\d|\omega|\leq \|\varphi\|_{L^\infty}(|\omega|(\Sigma_{\frac2n})+|\omega|(B_n^c))\stackrel{n\to\infty}\longrightarrow 0.
\end{equation*}

Next, we know by classical results that $\varphi\ast\check\eta_n\to\varphi$ uniformly in $\R^2$. Therefore
\begin{equation*}
|I_1|\leq   \|\varphi\ast\check\eta_n-\varphi\|_{L^\infty(\R^2)}|\omega|(\Omega)\stackrel{n\to\infty}\longrightarrow 0.
\end{equation*}
This completes the proof of \eqref{conv1}.

\bigskip

We prove now that any weak limit in the sense of measures of any subsequence of $|\om^n|$ is a continuous measure. Let $\om_+$, respectively $\om_-$, be the positive part, respectively the negative part, of the measure $\om$. Since $\om\in\H\subset H^{-1}(\Omega)$ we infer that $\om$ is a continuous measure (see \cite[Lemma 6.3.2]{Chemin}). Since $\om=\om_++\om_-$ and $\om_+$, $\om_-$ are orthogonal measures, we have that $\om_+$ and $\om_-$ are also continuous measures. Let $\om^n_+=(\chi_n\om_+)\ast\eta_n$ and  $\om^n_-=(\chi_n\om_-)\ast\eta_n$. Then $\om^n_+$ and $\om_-^n$ are single-signed, although they are not necessarily the positive and negative parts of $\om^n$. Nevertheless, we have the bound $|\om^n|\leq \om^n_+-\om^n_-$. We can apply the result proved in relation \eqref{conv1} to the measures $\om_+$ and $\om_-$. Indeed, we did not use the hypothesis that $\om\in\H$ to prove that result. We therefore have that $\om^n_+\rightharpoonup\om_+$ and $\om^n_-\rightharpoonup\om_-$ weakly in the sense of measures. Therefore $\om^n_+-\om^n_-\rightharpoonup\om_+-\om_-=|\om|$ weakly in the sense of measures. From the bound $|\om^n|\leq \om^n_+-\om^n_-$ we infer that any weak limit in the sense of measures of any subsequence of $|\om^n|$ must be bounded by $|\om|$. Since $|\om|$ is a continuous measure, we deduce that any weak limit in the sense of measures of any subsequence of $|\om^n|$ must be a continuous measure.

\bigskip

The proof that $K[\om]\in L^2(\Omega)$ and the proof of \eqref{conv123} is done in three steps. First we show that  $K[\om^n]$ is bounded in $L^2(\Om)$. Second, we prove that $K[\om^n]$ converges weakly to $K[\om]$ in $L^2(\Omega)$ (showing in particular that $K[\om]\in L^2(\Omega)$). Third, we show the strong convergence of $K[\om^n]$ to $K[\om]$ in $L^2(\Omega)$.

\medskip

We prove now that $K[\om^n]$ is bounded in $L^2(\Om)$. Let $f\in C^\infty_c(\Om)$ be a divergence free vector field and let us define $F=G[\curl f]$. Clearly $F\in C^\infty(\overline\Om)$ is bounded and vanishes on $\partial\Om$. We extend it to $\R^2$ by setting $F$ to vanish on $\Om^c$.
We have that
\begin{multline}\label{idistr}
\int_\Om K[\om^n]\cdot f =\int_\Om\nabla^\perp G[\om^n]\cdot f
= -\int_\Om G[\om^n] \curl f
=- \int_\Om G[\om^n]\triangle F
=- \lim_{R\to\infty}\int_{\Om\cap B_R} G[\om^n]\triangle F\\
=- \lim_{R\to\infty}\Bigl(\int_{\Om\cap B_R} \triangle G[\om^n]  F+\int_{|x|=R} G[\om^n]\ \frac x{|x|}\cdot \nabla G[\curl f]-\int_{|x|=R} \frac x{|x|}\cdot\nabla G[\om^n]\cdot  G[\curl f]\Bigr)
=-\int_\Om \om^nF.
\end{multline}
We used above that, for a compactly supported smooth function $h$, we have that $G[h](x)=\cO(1)$ and $\nabla G[h](x)=\cO(1/|x|^2)$, as $|x|\to\infty$. We recall now that $\om\in\H$ and we choose some $v\in L^2(\Om)$ such that $\dive v=0$, $\curl v=\om$ and $v$ is tangent to the boundary. Next, we observe that
\begin{multline}\label{jdistr}
\int_\Om \om^nF=\int_\Om (\chi_n\om)\ast\eta_n\ F
=\int_\Om \chi_n(F\ast\check\eta_n)\d\om
=\langle\om,  \chi_n(F\ast\check\eta_n)\rangle_{\mathcal{D}'(\Om),\mathcal{D}(\Om)}\\
=\langle\curl v,  \chi_n(F\ast\check\eta_n)\rangle_{\mathcal{D}'(\Om),\mathcal{D}(\Om)}
=-\langle v ,  \nabla^\perp[\chi_n(F\ast\check\eta_n)]\rangle_{\mathcal{D}'(\Om),\mathcal{D}(\Om)}\\
=-\int_\Om v \nabla^\perp\chi_n(F\ast\check\eta_n)
-\int_\Om v \chi_n(\nabla^\perp F\ast\check\eta_n).
\end{multline}
We estimate each of the two last terms above:
\begin{equation*}
 \bigl|\int_\Om v \chi_n(\nabla^\perp F\ast\check\eta_n)\bigr|\leq \|v \|_{L^2(\Om)}\|\nabla^\perp F\ast\check\eta_n\|_{L^2(\Sigma_{\frac1n}^c\cap B_{2n})}\leq \|v \|_{L^2(\Om)}\|\nabla^\perp F\|_{L^2(\Om)}
\end{equation*}
and
\begin{align*}
\bigl| \int_\Om v \nabla^\perp\chi_n(F\ast\check\eta_n) \bigl|
&=\bigl| \int_{(\Sigma_{\frac2n}\setminus\Sigma_{\frac1n})\cup(B_{2n}\setminus B_n)} v \nabla^\perp\chi_n(F\ast\check\eta_n) \bigl|\\
&\leq \|v \|_{L^2(\Om)}\bigl(Cn\|F\|_{L^2(\Sigma_{\frac5{2n}}\setminus\Sigma_{\frac1{2n}})}+\frac Cn\|F\|_{L^2(B_{2n+1}\setminus B_{n-1})}\bigr)\\
&\leq C\|v \|_{L^2(\Om)}\|\nabla F\|_{L^2(\Om)}
\end{align*}
where we used Lemma \ref{Poincare-lemma} and the fact that $F$ vanishes on $\partial\Om$.

We infer from \eqref{idistr} and \eqref{jdistr}, together with the estimates performed above, that the following inequality holds true:
\begin{equation}\label{conv8}
 \bigl| \int_\Om K[\om^n]\cdot f\bigr|\leq C  \|v \|_{L^2(\Om)}\|\nabla F\|_{L^2(\Om)}.
\end{equation}

Next we observe that
\begin{equation}\label{conv7}
 \|\nabla F\|_{L^2(\Om)}\leq \|f\|_{L^2(\Om)}. 
\end{equation}
Indeed, we have that $\triangle F=\curl f$ on $\Om$ so
\begin{equation} \label{Fstuff1}
-\int_\Om\triangle F\ F=  -\int_\Om\curl f\ F=\int_\Om f\cdot\nabla^\perp F\leq \|f\|_{L^2(\Om)}\|\nabla F\|_{L^2(\Om)}.
\end{equation}
On the other hand,
\begin{equation} \label{Fstuff2}
 -\int_\Om\triangle F\ F
=\int_\Om|\nabla F|^2-\lim_{R\to\infty}\int_{|x|=R}\frac x{|x|}\cdot\nabla F\ F= \|\nabla F\|_{L^2(\Om)}^2.
\end{equation}
Putting together \eqref{Fstuff1} and \eqref{Fstuff2}, we get \eqref{conv7}. Using \eqref{conv7} in \eqref{conv8} we obtain that
\begin{equation*}
 \left| \int_\Om K[\om^n]\cdot f\right|\leq C  \|v \|_{L^2(\Om)}\|f\|_{L^2(\Om)} , 
\end{equation*}
for all divergence free vector fields $f\in C^\infty_c(\Om)$. This implies that
\begin{equation*}
 \|K[\om^n]\|_{L^2(\Om)} \leq C  \|v \|_{L^2(\Om)}.
\end{equation*}

\medskip

We have, hence, established that  $K[\om^n]$ is bounded in $L^2$. Therefore, there exists a subsequence $K[\om^{n_k}]$ which converges weakly to some vector field $w\in L^2(\Om)$. Given that  $K[\om^{n_k}]$ is divergence free and tangent to the boundary, the same holds true for the weak limit $w$. Moreover, we have that $K[\om^{n_k}]\to w$ in $\mathscr{D}'(\Om)$ so $\om^{n_k}=\curl K[\om^{n_k}]\to \curl w$ in $\mathscr{D}'(\Om)$. Using  \eqref{conv1} we have that $ \om^{n_k}\to\om$ in $\mathscr{D}'(\Om)$, hence $\curl w=\om$. We use Proposition \ref{propdefK} and Lemma \ref{defcirc} with $\om$ replaced by $\om^{n_k}$ and $w$ replaced by $K[\om^{n_k}]$ to deduce the following identity:
\begin{equation*}
\int_\Om\varphi   \om^{n_k}+\int_\Om K[\om^{n_k}]\cdot\nabla^\perp\varphi=\sum_{j=1}^k\varphi\bigl|_{\Gamma_j}\int_\Om\w_j  \om^{n_k},
\end{equation*}
for all $\varphi\in \yinf$. Letting $n_k\to\infty$ and using \eqref{conv1} together with the fact that $K[\om^{n_k}]\rightharpoonup w$ weakly in $L^2$ implies that 
\begin{equation*}
\int_\Om\varphi   \d\om+\int_\Om w\cdot\nabla^\perp\varphi=\sum_{j=1}^k\varphi\bigl|_{\Gamma_j}\int_\Om\w_j  \d\om.
\end{equation*}
From Lemma \ref{defcirc} we infer that $w$ and $K[\om]$ both have  circulation around $\Gamma_j$ equal to $-\int_\Om\w_j\d\om$, so $K[\om]=w$. In particular, we have that $K[\om]\in L^2(\Om)$. We proved that the limit of any weakly convergent subsequence of $K[\om^n]$ in $L^2$ must necessarily be $K[\om]$. We conclude that the whole sequence $K[\om^n]$ converges to $K[\om]$ weakly in $L^2(\Om)$.

\medskip

Finally, we show the strong convergence of $K[\om^n]$ to  $K[\om]$ in $L^2(\Om)$. Given that we already know the weak convergence, it suffices to show the convergence of the norms. We use relations \eqref{idistr} and \eqref{jdistr} with $f=f_n=K[\om^n]$, $F=F_n=G[\curl f_n]=G[\om^n]$ and $v=K[\om]$ to obtain that
\begin{equation*}
\int_\Omega|K[\om^n]|^2=  \int_\Omega K[\om^n]\cdot f_n
=\int_\Omega  K[\om]\nabla^\perp\chi_n(F_n\ast\check{\eta}_n)
+\int_\Omega K[\om]\chi_n(\nabla^\perp F_n \ast\check\eta_n)
\equiv J_1+J_2.
\end{equation*}
In fact, we assumed in  relations \eqref{idistr} and \eqref{jdistr} that $f\in C_c^\infty(\Omega)$ while $K[\om^n]$ is not compactly supported. However, when we look at the proof of these relations, we observe that we only require sufficient decay at infinity for $f$. Since $K[\om^n]=\cO(|x|^{-2})$ as $|x|\to\infty$ is sufficient decay, we see that the proofs of \eqref{idistr} and \eqref{jdistr} go through for the choice $f=K[\om^n]$.

Next, since $f_n=\nabla^\perp F_n$, we have that
\begin{equation*}
J_2=\int_\Omega \eta_n\ast(\chi_n K[\om])\cdot f_n.  
\end{equation*}
By classical results, we know that $\eta_n\ast(\chi_n K[\om])\to K[\om]$ strongly in $L^2(\Omega)$. On the other hand, we have that $f_n=K[\om^n]\to K[\om]$ weakly in $L^2(\Omega)$. Therefore, we can pass to the limit in the term $J_2$:
\begin{equation*}
J_2  \stackrel{n\to\infty}\longrightarrow \int_\Omega|K[\om]|^2.
\end{equation*}

To bound the term $J_1$, recall that $\supp\nabla\chi_n\subset \Sigma_{\frac2n}\cup B_n^c$. We have that
\begin{equation*}
|J_1|\leq \|K[\om]\|_{L^2(  \Sigma_{\frac2n}\cup B_n^c)}\|\nabla^\perp\chi_n(F_n\ast\check{\eta}_n)\|_{L^2(\Omega)}.
\end{equation*}
As in the estimates that follow relation \eqref{jdistr}, we can bound
\begin{equation*}
 \|\nabla^\perp\chi_n(F_n\ast\check{\eta}_n)\|_{L^2(\Omega)}\leq C \|\nabla F_n\|_{L^2(\Omega)}=C \|f_n\|_{L^2(\Omega)}=C \|K[\om^n]\|_{L^2(\Omega)}\leq C'
\end{equation*}
independently of $n$. But clearly
\begin{equation*}
\|K[\om]\|_{L^2(  \Sigma_{\frac2n}\cup B_n^c)}  \stackrel{n\to\infty}\longrightarrow 0
\end{equation*}
so
\begin{equation*}
J_1  \stackrel{n\to\infty}\longrightarrow 0.
\end{equation*}
We conclude from the above relations $\|K[\om^n]\|_{L^2(\Omega)}\to \|K[\om]\|_{L^2(\Omega)}$ as $n\to\infty$. This completes the proof of Proposition \ref{aproxom}.
\end{proof}

\begin{remark}

We proved in relation \eqref{idistr} the following identity:
\begin{equation*}
\int_\Om K[\om]\cdot f=-\int_\Om \om \, G[\curl f]  
\end{equation*}
for all $\om \in C^\infty_c(\Om)$ and $f\in \bigl(C^\infty_{c}(\Om)\bigr)^2$. This would allow us to define $K[\om]$ in the sense of distributions  for all measures $\om\in \bm$, without requiring
$\omega \in \H$. For the sake of convenience we chose, in Definition \ref{defKfield}, to define $K[\om]$ in the subset of measures in $\H$.
\end{remark}

Given a vector field $u\in\L$ which is divergence free and tangent to the boundary such that $\omega=\curl u\in\bm $, one can compute the circulations $\gamma_1,\dots,\gamma_k$ on each of the connected components of the boundary $\Gamma_1,\dots,\Gamma_k$ by using Lemma \ref{defcirc}. Conversely, given vorticity and circulations, one can recover the velocity in this weak setting. The next proposition is the analogue of Proposition \ref{milton} in the weak setting.
\begin{proposition}\label{uomega}
Let $\omega\in\bm\cap \H $ and consider some arbitrary real numbers $ \gamma_1,\dots,\gamma_k$. There exists a unique vector field  $u\in \L$ such that $\curl u=\omega$, $\dive u=0$, $u$ is tangent to the boundary   and with circulations given by $ \gamma_1,\dots,\gamma_k$. Moreover, we have that
\begin{equation}\label{defucirc}
u=K[\omega]+\sum_{j=1}^k\bigl(\gamma_j+\int_\Omega\w _j\d\om \bigr)  \bX_j.
\end{equation}
\end{proposition}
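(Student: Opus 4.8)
The plan is to verify existence by exhibiting the explicit formula \eqref{defucirc} and checking it has all the required properties, and then to establish uniqueness by reducing to Proposition \ref{unicdec}. For existence, I would set $u=K[\omega]+\sum_{j=1}^k\bigl(\gamma_j+\int_\Omega\w_j\d\om\bigr)\bX_j$ and check each property in turn. By Definition \ref{defKfield}, $K[\omega]\in\L$ is divergence free, tangent to the boundary, with $\curl K[\omega]=\omega$; and by Proposition \ref{aproxom} we in fact have $K[\omega]\in L^2(\Om)$. Each $\bX_j$ is a harmonic vector field (hence divergence free, curl free, tangent to the boundary) lying in $\xspace$, so the linear combination lies in $\xspace$ and $u\in\L$. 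Taking the curl kills the harmonic part and leaves $\curl u=\omega$; divergence freeness and tangency to the boundary are preserved under the linear combination.

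The one substantive point is the circulation bookkeeping, which is where the coefficients are forced. By Definition \ref{defKfield}, the circulation of $K[\omega]$ on $\Gamma_j$ is $-\int_\Omega\w_j\d\om$. From property (iii) in \eqref{propsPsij}, each generator satisfies $\int_{\Gamma_\ell}\bX_j\cdot\hat\tau\,\d S=\delta_{j\ell}$, so the harmonic vector fields contribute exactly their coefficient to the circulation around each boundary component. Adding these, the circulation of $u$ around $\Gamma_j$ is $-\int_\Omega\w_j\d\om+\bigl(\gamma_j+\int_\Omega\w_j\d\om\bigr)=\gamma_j$, as required. To make this rigorous at the level of $\L$ regularity, the circulations should be understood in the sense of Lemma \ref{defcirc}, and the additivity of circulation over the decomposition \eqref{defucirc} follows from the linearity of the defining formula \eqref{defcirceq}.

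For uniqueness, suppose $u$ and $\tilde u$ both satisfy the conclusion. Their difference $\bar u=u-\tilde u\in\L$ is then divergence free, tangent to the boundary, with $\curl\bar u=\omega-\omega=0$ and with circulation $\gamma_j-\gamma_j=0$ on each $\Gamma_j$. Thus $\bar u$ satisfies precisely the hypotheses of Proposition \ref{unicdec}, which gives $\bar u=0$, i.e. $u=\tilde u$. The main obstacle I anticipate is not conceptual but technical: justifying that circulation is additive and behaves well under the decomposition \eqref{defucirc} for fields of the low regularity $\L$, since the naive Stokes-formula computation is only licensed for smooth fields. This is handled by working throughout with the weak notion of circulation from Lemma \ref{defcirc} and invoking the circulation assignment built into Definition \ref{defKfield}, rather than by any pointwise boundary trace argument.
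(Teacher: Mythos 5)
Your proof is correct and follows essentially the same route as the paper: existence by verifying that the explicit formula \eqref{defucirc} has all the required properties (using $K[\omega]\in L^2(\Om)$ from Proposition \ref{aproxom}, the circulation built into Definition \ref{defKfield}, and property (iii) of \eqref{propsPsij}), and uniqueness by applying Proposition \ref{unicdec} to the difference of two candidate solutions. Your extra care about additivity of circulations in the weak sense, via the linearity of \eqref{defcirceq}, is a point the paper leaves implicit, but it is the same argument.
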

\begin{proof}
The uniqueness part is proved in Proposition \ref{unicdec}. To prove the existence part, we observe that the vector field defined in \eqref{defucirc} has all the required properties. Indeed, since $K[\om]\in L^2(\Om)$ we have that $u\in\L$. It is obvious that $\curl u=\om$, $\dive u=0$ and that $u$ is tangent to the boundary. We conclude the proof by recalling that the circulation of $K[\om]$ around $\Gamma_j$ is given by $-\int_\Omega\w _j\d\omega$. 
\end{proof}

We end this section with the following characterization of the $\H$ norm.
\begin{proposition} \label{charH-1norm}
For any  $\omega\in\bm\cap \H $, we have that $\|\om\|_{\H}=\|K[\om]\|_{L^2(\Om)}.$  
\end{proposition}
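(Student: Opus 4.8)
The plan is to recognize $\|\cdot\|_{\H}$ as the $L^2$-distance from the origin to the affine set of admissible reconstructions, and to show that $K[\omega]$ is precisely the element of least norm in that set. By Proposition \ref{aproxom} we already know $K[\omega]\in L^2(\Om)$; since $K[\omega]$ is divergence free, tangent to the boundary and has curl $\omega$, it is one of the competitors in the infimum \eqref{defnormh}, so that $\|\omega\|_{\H}\leq\|K[\omega]\|_{L^2(\Om)}$ immediately. For the reverse inequality, observe that if $v\in L^2(\Om)$ is any competitor, then $w:=v-K[\omega]$ lies in $M=\{w\in L^2(\Om)\ ;\ \dive w=0,\ \curl w=0,\ w\cdot\hat n|_{\partial\Om}=0\}$, because $\curl w=\omega-\omega=0$. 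Hence the whole matter reduces to the orthogonality claim $\int_\Om K[\omega]\cdot w=0$ for every $w\in M$: once this is known, Pythagoras gives $\|v\|_{L^2}^2=\|K[\omega]\|_{L^2}^2+\|w\|_{L^2}^2\geq\|K[\omega]\|_{L^2}^2$, and taking the infimum over $v$ yields $\|\omega\|_{\H}\geq\|K[\omega]\|_{L^2(\Om)}$.

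To prove the orthogonality I would first represent $w\in M$ by a stream function. Since $w$ is divergence free and tangent to $\partial\Om$, its flux across each $\Gamma_j$ vanishes, so there is a single-valued $\phi$ with $w=\nabla^\perp\phi$; the vanishing of $\curl w$ makes $\phi$ harmonic, tangency forces $\phi$ to be constant on each $\Gamma_j$, and $w\in L^2$ together with a Laurent expansion at infinity gives $\nabla\phi=w=\mathcal O(|x|^{-2})$ (concretely, $M$ is spanned by the differences $\X_i-\X_j$, with $\phi=\Psi_i-\Psi_j$ bounded). I would then test against the smooth approximations $\omega^n=(\chi_n\omega)\ast\eta_n$ of Proposition \ref{aproxom}. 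For these, Proposition \ref{propdefK} gives $K[\omega^n]=\nabla^\perp G[\omega^n]$ with $G[\omega^n]$ bounded and vanishing on $\partial\Om$, so that integration by parts on $\Om\cap B_R$ yields
\begin{equation*}
\int_{\Om\cap B_R}K[\omega^n]\cdot w=\int_{\Om\cap B_R}\nabla G[\omega^n]\cdot\nabla\phi=-\int_{\Om\cap B_R}G[\omega^n]\,\Delta\phi+\int_{\partial(\Om\cap B_R)}G[\omega^n]\,\partial_n\phi\,\d S.
\end{equation*}
The bulk term vanishes because $\phi$ is harmonic, the contribution on $\partial\Om$ vanishes because $G[\omega^n]=0$ there, and the contribution on $\{|x|=R\}$ tends to $0$ as $R\to\infty$ since $G[\omega^n]$ is bounded while $|\nabla\phi|=\mathcal O(R^{-2})$ and the circle has length $2\pi R$. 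Hence $\int_\Om K[\omega^n]\cdot w=0$ for every $n$.

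Finally I would pass to the limit: by Proposition \ref{aproxom}, $K[\omega^n]\to K[\omega]$ strongly in $L^2(\Om)$, and $w\in L^2(\Om)$ is fixed, so $\int_\Om K[\omega]\cdot w=\lim_n\int_\Om K[\omega^n]\cdot w=0$, which is exactly the orthogonality claim. Combining the two inequalities gives $\|\omega\|_{\H}=\|K[\omega]\|_{L^2(\Om)}$ and shows in addition that the infimum defining the norm is attained precisely at $K[\omega]$. I expect the only delicate point to be the control of the boundary term at infinity in the integration by parts; this is where the exact decay $\nabla\phi=\mathcal O(|x|^{-2})$ of the harmonic fields and the boundedness of $G[\omega^n]$ (both coming from Proposition \ref{propK} and the structure of $\xspace$) are essential, and it is the reason for working with the compactly supported, mollified vorticities $\omega^n$ rather than testing $\omega$ directly with a truncated $\phi$, which is not of bounded support and hence not an admissible test function in \eqref{defcirceq}.
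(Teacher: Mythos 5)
Your proof is correct and follows essentially the same route as the paper's: both reduce the claim, via Pythagoras, to the orthogonality of $K[\om]$ to the space of square-integrable, divergence-free, curl-free fields tangent to $\partial\Om$, and both prove that orthogonality by testing against the mollified vorticities $\om^n$ of Proposition \ref{aproxom}, integrating by parts using that $G[\om^n]$ is bounded and vanishes on $\partial\Om$ while the harmonic field decays like $|x|^{-2}$, and then passing to the limit via the strong convergence $K[\om^n]\to K[\om]$ in $L^2(\Om)$. The only difference is presentational: you encode $w=v-K[\om]$ through a harmonic stream function $\phi$ and put the vanishing of $\Delta\phi$ in the bulk term, whereas the paper identifies $w=\sum_j\al_j\X_j$ with $\sum_j\al_j=0$ and uses $\curl\X_j=0$ --- the same integration by parts written in dual form.
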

\begin{proof}
Given that $K[\om]$ is square-integrable, divergence free, tangent to the boundary and with curl $\om$, we see from the definition of $\|\om\|_{\H}$ given in \eqref{defnormh} that it suffices to show that $  \|K[\om]\|_{L^2(\Om)}\leq \|v\|_{L^2(\Om)}$ for every square integrable vector field $v$ which is divergence free, tangent to the boundary and of curl $\om$. Let $v$ be such a vector field. Then $v-K[\om]$ is square integrable, divergence and curl free and tangent to the boundary. Therefore it must be a linear combination of the harmonic vector fields $X_1,\dots,X_n$:
\begin{equation*}
 v-K[\om]=\sum_{j=1}^k\al_j X_j. 
\end{equation*}
From \eqref{harumph1} we see that
\begin{equation*}
 \sum_{j=1}^k\al_j X_j= \left(\sum_{j=1}^k\al_j \right)\frac{x^\perp}{2\pi|x|^2}+\cO(|x|^{-2})
\end{equation*}
at infinity. Since the left-hand side is square integrable, we deduce that $\sum_{j=1}^k\al_j =0$. 

Clearly
\begin{equation*}
\|v\|_{L^2(\Om)}^2= \|K[\om]\|_{L^2(\Om)}^2+\|\sum_{j=1}^k\al_j X_j\|_{L^2(\Om)}^2+2\int_\Om K[\om]\cdot \sum_{j=1}^k\al_j X_j.
\end{equation*}
It suffices to show that the last term on the right-hand side vanishes. Let $\om^n$ be defined as in Proposition \ref{aproxom}. Then, since $K[\om^n]\to K[\om]$ in $L^2(\Om)$, we have that
\begin{equation*}
 \int_\Om K[\om]\cdot \sum_{j=1}^k\al_j X_j=\lim_{n\to\infty}\int_\Om K[\om^n]\cdot \sum_{j=1}^k\al_j X_j. 
\end{equation*}
But using that $G[\om^n]$ is bounded and that $\sum_{j=1}^k\al_j X_j$ is curl free and $\cO(|x|^{-2})$ at infinity, we have that
\begin{equation*}
\int_\Om K[\om^n]\cdot \sum_{j=1}^k\al_j X_j
=\int_\Om \nabla^\perp G[\om^n]\cdot \sum_{j=1}^k\al_j X_j
=-\int_\Om G[\om^n]\curl \sum_{j=1}^k\al_j X_j
=0
\end{equation*}
since the harmonic fields $X_j$ are curl free. This completes the proof.
\end{proof}

\section{An adaptation of Delort's theorem to exterior domains}
In this section we provide a precise statement and a proof of Delort's Theorem in the exterior domain $\Om$. We consider incompressible ideal 
fluid flow in the exterior domain $\Om \equiv \real^2 \setminus \Omega$, modeled by the Euler equations. Recall the initial-boundary-value problem for the $2$-D incompressible Euler equations:
\begin{equation} \label{2DincEuler}
\left\{
\begin{array}{ll}
\partial_t u + u \cdot \nabla u + \nabla p = 0, &  \mbox{ in } \real_+ \times \Om,\\
\text{div }  u =0, &   \mbox{ in }\overline{\real}_+\times\Om,\\
u \cdot \hat{n} = 0,  &  \text{ on }   \overline{\real}_+ \times \partial \Om ,\\
| u(t,x)|   \rightarrow 0, &  \text{ as }   | x|  \rightarrow \infty, \;\; t\in\real_+,\\
u(0,x)=u_0(x), &  \text{ on } \{t=0\}\times\Om.
\end{array}
\right.
\end{equation}
Here, $u=(u_1,u_2)$ is the velocity field and $p$ is the scalar pressure. The evolution equation for $\omega = \mbox{ curl } u$ is known as the vorticity equation:
\begin{equation*} 
\left\{
\begin{array}{ll}
\partial_t \omega + u \cdot \nabla \omega = 0, &\mbox{ in } \real_+ \times \Omega\\
\mbox{div } u = 0, \;\; \mbox{ curl } u = \omega, &  \mbox{ in } \overline{\R}_+\times\Omega\\
u \cdot \hat{n} = 0,  &  \text{ on }   \overline{\R}_+\times \partial \Omega ,\\
| u(t,x)|   \rightarrow 0, &  \text{ as }   | x|  \rightarrow \infty, \;\; t\in\real_+,\\
\omega(0,x)=\omega_0(x) = \mbox{ curl } u_0, &  \text{ on } \{t=0\}\times\Omega.
\end{array}
\right.
\end{equation*}

We begin with a precise formulation of what it means to be a weak solution of \eqref{2DincEuler}.

\begin{definition} \label{eulerweaksol}
Let $u_0 \in L^2\loc (\overline\Om)$ and $u = u(x,t) \in L^{\infty}\loc([0,\infty); L^2\loc (\overline\Om)) \cap C^0([0,\infty); \mathscr{D}'(\Om))$ 
be a vector field. We will say that $u$ is a {\it weak solution} of the 2D incompressible Euler equations in $\Om$ with initial
data $u_0$ if, for any divergence free 
test vector field $\phi \in C^{\infty}_c([0,\infty)\times\Om)$ we have:
\begin{enumerate}
\item We have the integral identity
\begin{equation} \label{defveleq}
\int_0^\infty \int_{\Om}\partial_t \phi \cdot u  \d x \d t + \int_0^\infty \int_{\Om} \nabla\phi : u \otimes u  \d x\d t  + \int_{\Om} u_0 \cdot \phi(0,\cdot) \d x = 0.
\end{equation} 
\item For each time $t \in [0,\infty)$, the vector field $u(\cdot,t)$ is divergence free in the sense of distributions in $\Om$.
\item The boundary condition $u(\cdot,t) \cdot n=0$ is satisfied in the trace sense in $\partial \Om$, for every time.
\end{enumerate}
\end{definition}

\begin{remark}
The condition that $u\in C^0([0,\infty); \mathscr{D}'(\Om))$ is a consequence of $u  \in L^{\infty}\loc([0,\infty); L^2\loc (\overline\Om))$ and of the integral relation in (a). 
We chose to require it explicitly in Definition \ref{eulerweaksol} only to make sense of parts (b) and (c). Moreover, the fact that $u$ is $L^{\infty}\loc([0,\infty);L^2\loc )$ and $C([0,\infty);\mathscr{D}')$ implies 
that $u$ is continuous into weak $L^2\loc $, which, in particular, implies that it belongs to $L^2\loc $ pointwise in time.
This, together with the (pointwise in time) divergence free condition implies that the normal component of $u$ 
has a trace at the boundary for each fixed time, so that the boundary data can be imposed pointwise in time.     
\end{remark}

In \cite{Delort91}, Delort proved his existence result for bounded domains, full space and compact manifolds without boundaries.
However, his proof of the bounded domain case is {\it local}, because it is based on the local behavior of certain quadratic
expressions in the components of velocity. To extend Delort's result to an exterior domain we first require a strategy to obtain
an approximate solution sequence. This can be done by mollifying initial data and using Kikuchi's existence result, see 
\cite{Kikuchi83}. We give now a precise statement of a version of Delort's existence result for flows in an exterior domain. 
\begin{theorem}\label{theodelort}
Assume that $u_0\in \L $ is divergence free, tangent to the boundary and that $\om_0 = \mbox{ curl } u_0 \in\bmp + L^1(\Om)$. Then there exists a global weak solution $u\in L^\infty([0,\infty);\L)$ of the incompressible Euler equations with initial velocity $u_0$.   
\end{theorem}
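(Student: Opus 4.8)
The plan is to follow Delort's original scheme, adapted to the exterior domain: construct an approximate solution sequence by mollifying the initial data, solve the Euler equations exactly for each approximation using Kikuchi's existence theorem, establish uniform bounds, and then pass to the limit in the weak velocity formulation \eqref{defveleq}. The main analytical difficulty, exactly as in the classical case, is passing to the limit in the quadratic nonlinearity $u\otimes u$, since uniform $L^2$ control of velocity gives only weak $L^2$ convergence along a subsequence, which is insufficient to control products. This is where Delort's compensated-compactness structure enters. I will treat the two pieces of the initial vorticity separately: write $\om_0 = \bar\om_0 + \theta_0$ with $\bar\om_0\in\bmp$ the nonnegative measure part in $\H$ and $\theta_0\in L^1(\Om)$.

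First I would set up the approximation. For each $n$ I would smooth the initial data to produce $u_0^n$ that is smooth, divergence free, tangent to the boundary, and with $\om_0^n = \curl u_0^n$ a smooth compactly supported function approximating $\om_0$; here the mollification-and-cutoff machinery from Proposition \ref{aproxom} together with the reconstruction formula \eqref{recover} of Proposition \ref{milton} gives a clean way to build $u_0^n$ from regularized vorticity while keeping the circulations $\gamma_j$ fixed. Kikuchi's theorem \cite{Kikuchi83} then furnishes a global smooth solution $u^n$ with $u^n\in L^\infty([0,\infty);\L)$ for each $n$. The key uniform estimates are conservation of the $L^2$-type energy and, crucially for Delort's argument, conservation of the $L^1$ norm of vorticity and control of $\|\om^n(t)\|_{\H}$; these transport-based conservation laws follow because vorticity is transported along the (smooth) flow, so $\om^n(t)$ stays a nonnegative measure (plus an $L^1$ part) with controlled mass, and Proposition \ref{charH-1norm} converts the $\H$ bound into a uniform $L^2$ bound on $K[\om^n]$, hence on $u^n$.

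The heart of the proof is the passage to the limit. From the uniform bounds I extract a subsequence with $u^n \rightharpoonup u$ weak-$\ast$ in $L^\infty([0,\infty);\L)$ and $\om^n(t)\rightharpoonup\om(t)$ in the sense of measures. The linear terms in \eqref{defveleq} pass to the limit immediately. For the quadratic term I would use Delort's observation that the relevant bilinear expression, when paired against $\nabla\phi$ for a divergence free test field, can be rewritten symmetrically in a way that the diagonal singularity of the Biot--Savart kernel is tamed: this is precisely where the symmetrized kernel estimate of Proposition \ref{propestK}, namely the bound on $|f(x)\cdot K(x,y)+f(y)\cdot K(y,x)|$, does the work that \eqref{thisisnotgood} does in the full-plane case. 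Combined with the sign condition $\bar\om_0\ge0$ (which guarantees, via Lemma \ref{weakconvmeas}, that no concentration of mass is lost and that the defect measure of the weak limit is controlled), this yields convergence of the nonlinear term to the expected limit. The hardest and most delicate step is exactly this nonlinear passage, and it is where the nonnegativity hypothesis $\om_0\in\bmp+L^1$ is genuinely used.

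Finally I would verify that the limit $u$ satisfies all three clauses of Definition \ref{eulerweaksol}: the integral identity follows from the limiting procedure above; the pointwise-in-time divergence free condition and the tangency boundary condition are preserved because each $u^n$ is divergence free and tangent, these are closed conditions under the weak $L^2\loc$ convergence, and the trace makes sense by the remark following Definition \ref{eulerweaksol}. I expect the routine parts (energy bounds, extraction of subsequences, convergence of linear terms) to be straightforward adaptations, and I would keep the exposition brief there; the proof should emphasize only the two genuinely exterior-domain-specific points, namely the uniform $L^2$ velocity bound obtained through $\H$ and Proposition \ref{charH-1norm}, and the use of Proposition \ref{propestK} to handle the boundary contribution in Delort's compensated-compactness argument.
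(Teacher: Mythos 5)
Your skeleton (mollify the data, solve exactly via Kikuchi, extract weak limits, use compensated compactness for the quadratic term) is the same as the paper's, but two of your key technical claims are wrong, and they are precisely the two points you flag as "genuinely exterior-domain-specific". First, the uniform $L^2$ bound on the velocities. You claim that control of $\|\om^n(t)\|_{\H}$ is a ``transport-based conservation law''. It is not: by Proposition \ref{charH-1norm} this norm equals $\|K[\om^n(t)]\|_{L^2(\Om)}$, an energy-type quantity that is not invariant under rearrangement of the vorticity, so transport of $\om^n$ along the flow gives no control on it whatsoever. Transport gives the $L^1$ bound and the sign structure, nothing more. The velocity bound must instead be extracted from the momentum equation, and this is delicate here because $u^n$ itself is not square integrable (it carries the harmonic part $\Y\sim x^\perp/(2\pi|x|^2)$). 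The paper writes $u^n=(u^n-\Y)+\Y$ with $\Y$ time-independent, applies the Leray projector to get $\partial_t(u^n-\Y)+\mathbb{P}(u^n\cdot\nabla u^n)=0$, multiplies by $u^n-\Y$ and closes a Gronwall estimate; note that the renormalized energy $\|u^n(t)-\Y\|_{L^2(\Om)}$ is \emph{not} conserved (there are cross terms involving $\nabla\Y$), only bounded on compact time intervals. Your proposed route cannot replace this step, since controlling $\|K[\om^n(t)]\|_{L^2}$ is equivalent, up to terms controlled by the $L^1$ bound, to controlling $\|u^n(t)-\Y\|_{L^2}$, i.e.\ it presupposes exactly the estimate you skipped.

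Second, the nonlinear term. In Definition \ref{eulerweaksol} the test fields are compactly supported \emph{in} $\Om$, so passing to the limit in $\int\int\nabla\phi:u^n\otimes u^n$ is a purely interior matter and there is no ``boundary contribution'' to handle: the paper localizes on balls $\B$ with $\overline{\B}\subset\Om$, introduces local stream functions $\psi^n$, sets $\tu^n=\nabla^\perp(f\psi^n)$, and applies the full-plane compensated-compactness result \cite[Theorem 6.3.1]{Chemin}, whose hypothesis (the weak-$\ast$ limit of $|\curl\tu^{n_p}|$ is a continuous measure) is what the sign condition and the $H^{-1}_{{\rm loc}}$ bound supply. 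Proposition \ref{propestK} plays no role in this existence proof; it belongs to the weak \emph{vorticity} formulation and is used in Theorem \ref{equivth} and in Section 7. If what you actually intend is to prove existence directly in the weak vorticity formulation (where Proposition \ref{propestK} and Lemma \ref{weakconvmeas} would indeed be the relevant tools), the argument has a hole that cannot be patched: Lemma \ref{weakconvmeas} requires tightness of $\om^n(t)$ in $\Om$, i.e.\ no mass approaching $\partial\Om$, and for the time-dependent solutions this is exactly what may fail --- vorticity can concentrate on the boundary components, which is the circulation-defect phenomenon quantified in Theorem \ref{thmsomething}; moreover, weak-$\ast$ convergence in $\bm$ says nothing about pairings with test functions of $\yinf$ that equal nonzero constants near $\Gamma_j$. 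This is precisely why the paper proves existence in the velocity formulation first and only afterwards obtains the vorticity formulation through the equivalence Theorem \ref{equivth}.
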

\begin{proof}
We start by observing that $\om_0\in\H$. Indeed, we write $u_0 = v_0 + Y$, with $v_0 \in L^2(\Omega)$ and $Y \in \mathscr{X}$. Since both $u_0$ and $Y$ are divergence free and tangent to 
$\partial \Omega$, so is $v_0$ and, since $\mbox{ curl } Y = 0$,  it follows that $\om_0 = \mbox{ curl } v_0$. Hence, $\om_0$  satisfies the requirements in the definition of $\H$.  

By Lemma \ref{defcirc}, the circulations of $u_0$ on each of $\Gamma_1,\dots,\Gamma_k$ are well defined: $\gamma_{1,0},\dots,\gamma_{k,0}$. From Proposition \ref{uomega} we know that
\begin{equation}\label{defuzero}
 u_0=K[\omega_0]+\sum_{j=1}^k\bigl(\gamma_{j,0}+\int_\Omega\w _j\d\om_0 \bigr)  \bX_j=K[\omega_0]+\Y,\quad\text{where } \Y=\sum_{j=1}^k\bigl(\gamma_{j,0}+\int_\Omega\w _j\d\om_0 \bigr)  \bX_j.
\end{equation}
Next, we smooth out $\om_0$ by convolution with $\eta_n$ (defined in \eqref{etan}). We smooth out the initial vorticity as in Proposition \ref{aproxom} by writing $\om_0^n=\eta_n\ast(\chi_n\om_0)$ and we define 
\begin{equation}\label{defuzerobis}
u^n_0=K[\omega^n_0]+\Y.
\end{equation}

From \eqref{conv123} we have that $u_0-u^n_0$ goes to 0 strongly in $L^2(\Om)$. Since $u_0^n$ is smooth, one 
can use Kikuchi's result \cite{Kikuchi83} to construct a global smooth solution $u^n$ of the incompressible Euler equations
 in $\Om$ with initial velocity $u_0^n$. 
Let us define
\begin{equation*}
\om^{n}=\curl u^{n}\quad\text{and}\quad v^{n}=K[\om^{n}].  
\end{equation*}
From Proposition \ref{milton}, we know that the circulation of $u^n_0$ on $\Gamma_j$ is given by 
\begin{equation*}
\gamma_{j,0}+\int_\Om \w _j\d\om_0-\int_\Om \w _j\om_0^n . 
\end{equation*}
Since $u^n$ is smooth, the Kelvin circulation theorem applies so the circulation of $u^n$ on $\Gamma_j$ is given by the same quantity. Using again Proposition \ref{milton} we obtain that
\begin{equation}\label{unprop}
u^n=K[\omega^n]+\sum_{j=1}^k\bigl(\gamma_{j,0}+\int_\Om \w _j\d\om_0-\int_\Om \w _j\om_0^n+\int_\Omega\w _j\om^n \bigr)  \bX_j.
\end{equation}

Observe now that $u^n-\Y=\mathcal{O}(|x|^{-2})$ as $|x|\to\infty$. Indeed, we know that $K[\omega^n]=\mathcal{O}(|x|^{-2})$. Using \eqref{harumph1} we have that
\begin{equation*}
u^n-\Y=\frac{x^\perp}{2\pi|x|^2}\sum_{j=1}^k\Bigl(  \int_\Omega\w _j\om^n -\int_\Om \w _j\om_0^n\Bigr)+\mathcal{O}(|x|^{-2})
=\frac{x^\perp}{2\pi|x|^2}\Bigl(  \int_\Omega\om^n -\int_\Om \om_0^n\Bigr)+\mathcal{O}(|x|^{-2})=\mathcal{O}(|x|^{-2})
\end{equation*}
as $|x|\to\infty$. We used above that $\sum_{j=1}^k\w _j$ is a harmonic function, equal to 1 on $\partial\Omega$ and with a finite limit at $\infty$ so, by uniqueness, it must be identically equal to 1. Similarly $\partial_t(u^n-\Y)=\mathcal{O}(|x|^{-2})$ as $|x|\to\infty$.

Since $\Y$ is time independent, we have that
\begin{equation*}
\partial_t(u^n-\Y)+u^n\cdot\nabla u^n=-\nabla p_n.  
\end{equation*}
In the relation above, the first term on the left-hand side is $\mathcal{O}(|x|^{-2})$ as $|x|\to\infty$. Thanks to \eqref{unprop} and to the fact that $\nabla u^n$ is uniformly bounded we also have that the second term on the left-hand side is $\mathcal{O}(|x|^{-1})$ as $|x|\to\infty$. We conclude that the left-hand side above is $\mathcal{O}(|x|^{-1})$ as $|x|\to\infty$, so it belongs to $L^p(\Omega)$ for all $p>2$. Since the Leray projector $\mathbb{P}$ is well-defined and continuous on every  $L^p(\Omega)$, $1<p<\infty$ (see for example \cite{SS92}), we can apply it to the relation above to obtain that
\begin{equation}\label{equn}
\partial_t(u^n-\Y)+\mathbb{P}(u^n\cdot\nabla u^n)=0.  
\end{equation}
Since the term above belongs to any $L^p(\Omega)$, $p>2$, and since $u^n-\Y\in L^p(\Omega)$ for all $p>1$, we can multiply \eqref{equn} by $u^n-\Y$ to obtain, after integrating by parts, that
\begin{align*}
\frac12\dt\| u^n-\Y\|_{L^2(\Om)}^2
&=-\int_\Om u^n\cdot\nabla u^n\cdot (u^n-\Y)
= -\int_\Om u^n\cdot\nabla Y\cdot (u^n-\Y)\\
&= -\int_\Om (u^n-\Y)\cdot\nabla \Y\cdot (u^n-\Y)
-\int_\Om \Y\cdot\nabla \Y\cdot (u^n-\Y)\\
&\leq \| u^n-\Y\|_{L^2(\Om)}^2 \| \nabla\Y\|_{L^\infty(\Om)}+\| u^n-\Y\|_{L^2(\Om)}\| \Y\cdot\nabla\Y\|_{L^\infty(\Om)}.
\end{align*}
The Gronwall lemma implies that $u^n-Y$ is uniformly bounded in $L^\infty\loc([0,\infty);L^2(\Om))$. From the vorticity equation we have that $\om^n$ is bounded in $L^\infty(\R_+;L^1(\Om))$. Therefore, there exists a subsequence $u^{n_p}$ of $u^n$ and some $u\in L^\infty\loc([0,\infty);\Y+L^2(\Om))$  such that $u^{n_p}-u$ converges to 0 weak$\ast$ in $L^\infty\loc([0,\infty);L^2(\Om))$. Passing to further subsequences as needed, we also have that there exists $\om\in L^\infty(\R_+;\bmp + L^1(\Om))$ so that $\om^{n_p}\to\om$ weak$\ast$ in  $L^\infty(\R_+;\bmp + L^1(\Om))$. To see this, we split $\om_0$ into its positive and negative parts, so that $\om_0^{+} \in \bmp$ and $\om_0^{-}\in L^1(\Om)$. Then, $\om_0^{n,-} = \eta_n \ast (\chi_n\om_0^-)$ converges weakly (even strongly) in $L^1(\Om)$ to $\om_0^-$. Let $\Phi^n=\Phi^n(t,x)$ denote the flow map associated to $u^n$.  Write the corresponding smooth $\om^n$ as $\om^n = \om^{n,+} + \om^{n,-}$, where $\om^{n,+} = \om^{n,+}(t,x) = \om^{n,+}_0 ((\Phi^n)^{-1}(t,x))$ and 
$\om^{n,-} = \om^{n,-}(t,x) = \om^{n,-}_0 ((\Phi^n)^{-1}(t,x))$. 
Then, since $\om^{n,-}(t,\cdot) $ is a rearrangement of $\om^{n,-}_0(\cdot)$  and since, by the Dunford-Pettis theorem, $\om_0^{n,-}$ is uniformly integrable, so is $\om^{n,-}$, uniformly in time. Furthermore, $\om^{n,+} \geq 0$, while $\om^{n,-}\leq 0$. It follows that, passing to subsequences as needed, $\om^{n_p,+}$ converges weak-$\ast$ in $L^\infty\loc(\R_+;\bmp)$, to some $\om^+ \in L^\infty\loc(\R_+;\bmp)$, while, by the Dunford-Pettis theorem, $\om^{n_p,-}$ converges weak-$\ast$ in $L^\infty\loc(\R_+;L^1(\Om))$ to some $\om^- \in L^\infty\loc(\R_+;L^1(\Om))$, $\om^- \leq 0$. With this notation we find $\om = \om^+ + \om^-$. We can further assume that $|\om^{n_p}|$ converges weak$\ast$ in $L^\infty(\R_+;\bmp)$ to some $\mu$. 
We prove now that $\mu$ is a continuous measure. 
Since $\om^{n_p}$ is bounded in $L^\infty(\R_+;H^{-1}_{loc}(\Om))$, we have that $\om\in L^\infty(\R_+;H^{-1}_{loc}(\Om))$. Therefore $\om$ is a continuous measure. But $\om^-\in L^\infty_{loc}(\R_+;L^1(\Om))$ so $\om^-$ is continuous as a measure. Since $\om=\om^++\om^-$ is a continuous measure, we infer that $\om^+$ is a continuous measure too. Finally, from the bound $|\om^{n_p}|\leq \om^{n,+}-\om^{n,-}$ we deduce that $|\mu|\leq \om^+-\om^-$ so $\mu$ must be continuous. 

We claim now that
\begin{equation}\label{delort}
u^{n_p}_1u^{n_p}_2\to u_1u_2 \quad\text{and}\quad (u^{n_p}_1)^2-(u^{n_p}_2)^2\to (u_1)^2-(u_2)^2  
\end{equation}
in the sense of distributions of $\R_+\times\Om$. 
To show this, let $\B$ be a ball whose closure is included in $\Om$ and let $f\in C^\infty_c (\B;\R_+)$. We denote by $\psi^n$ the unique function  defined on $\B$ such that $u^n=\nabla^\perp\psi^n$ and $\int_{\B}\psi_n=0$. Let us define
\begin{equation*}
\tu^n=\nabla^\perp(f\psi^n)
\end{equation*}
and extend it with zero values on $\R^2\setminus\B$. We define in the same manner $\psi$ and $\tu$ associated to $u$. Given that $u^n$ is bounded in $L^\infty\loc([0,\infty);L^2(\B))$, the Poincar\'e inequality implies that $\psi^n$ is bounded in $L^\infty\loc([0,\infty);H^1(\B))$. Given that $u^{n_p}-u$ converges to 0 weak$\ast$ in $L^\infty\loc([0,\infty);L^2(\Om))$, every weak limit in $L^\infty\loc([0,\infty);H^1(\B))$ of $\psi^{n_p}$ is a function of vanishing mean on $\B$ and whose $\nabla^\perp$ is $u$. So such a weak limit is necessarily $\psi$. Having established that   $\psi^{n_p}\to\psi$ weak$\ast$ in $L^\infty\loc([0,\infty);H^1(\B))$, it is trivial to see that $\tu^{n_p}\to\tu$ weak$\ast$ in $L^\infty\loc([0,\infty);L^2(\R^2))$.


Moreover, we have that
\begin{equation*}
\curl\tu^n=\curl (fu^n+\psi^n\nabla^\perp f)
=f\om^n+2u^n\cdot\nabla^\perp f+\psi^n\triangle f  .
\end{equation*}
Clearly $f\om^n$ is bounded in $L^\infty\loc\bigl([0,\infty);L^1(\R^2)\bigr)$. Moreover, given that $u^n$ and $\psi^n$ are bounded in $L^2(\B)$ we infer that
\begin{equation*}
2u^n\cdot\nabla^\perp f+\psi^n\triangle f  \quad\text{is bounded in}\quad   L^\infty\loc\bigl([0,\infty);L^1(\R^2)\cap L^2(\R^2)\bigr).
\end{equation*}
In particular, $\curl\tu^n$ is bounded in $L^\infty\loc\bigl([0,\infty);L^1(\R^2)\bigr)$. Given that the weak limit of $|\om^{n_p}|$ is a continuous measure, the same holds true for $|\curl\tu^{n_p}|$.

Given the informations we have on $\tu^{n_p}$ we can now apply \cite[Theorem 6.3.1]{Chemin}, to deduce that 
\begin{equation*}
\tu^{n_p}_1\tu^{n_p}_2\to \tu_1\tu_2 \quad\text{and}\quad (\tu^{n_p}_1)^2-(\tu^{n_p}_2)^2\to (\tu_1)^2-(\tu_2)^2  
\end{equation*}
in the sense of distributions of $\R_+\times\R^2$. 

Given that $f\in C^\infty_c (\B;\R_+)$ is arbitrary we infer that \eqref{delort} holds true in the sense of distributions of $\R_+\times\B$. Since $\B$ is an arbitrary ball relatively compact in $\Om$, we infer that \eqref{delort} holds true in the sense of distributions of $\R_+\times\Om$.

Given \eqref{delort}, it is trivial to pass to the limit in the equation of $u^{n_p}$. Indeed, let us consider $\phi\in C^\infty_c ([0,\infty)\times\Om)$ be a divergence free test vector field. We multiply the equation of $u^{n_p}$ by $\phi$ and integrate to obtain
\begin{equation*}
\int_\Om u^{n_p}_0 \cdot \phi(0,\cdot)+\int_0^\infty\int_\Om  u^{n_p}\cdot \partial_t\phi+\int_0^\infty\int_\Om  (u^{n_p}\otimes  u^{n_p})\cdot\nabla\phi=0.
\end{equation*}
When we send $n_p\to\infty$, the first two terms on the left-hand side converge trivially to the desired limit. The second term passes to the limit thanks to \eqref{delort}, since
\begin{equation*}
(u^{n_p}\otimes  u^{n_p})\cdot\nabla\phi=  u^{n_p}_1u^{n_p}_2(\partial_1\phi_2+\partial_2\phi_1)+[(u^{n_p}_1)^2-(u^{n_p}_2)^2]\partial_1\phi_1.
\end{equation*}

We obtain that 
\begin{equation*}
\int_\Om u_0 \cdot \phi(0,\cdot)+\int_0^\infty\int_\Om  u\cdot \partial_t\phi+\int_0^\infty\int_\Om  (u\otimes  u)\cdot\nabla\phi=0
\end{equation*}
which is the formulation in the sense of distributions of the incompressible Euler equations in velocity form.
\end{proof}

\section{Weak vorticity formulation}

The aim of this section is to give a vorticity formulation which is equivalent to the velocity formulation. This is an extension of a result due to S. Schochet, see \cite{Schochet95}. More precisely, we will prove the following theorem. 

\begin{theorem}\label{equivth}
Let $u_0\in \L $ be divergence free, tangent to the boundary and assume that $\om_0 = \mbox{ curl } u_0 \in\bm$.
Let $u\in L^\infty\loc([0,\infty);\L)$ be a divergence free vector field, tangent to the boundary,  and such that $\om=\curl u\in L^\infty\loc([0,\infty);\bm)$. Let $\gamma_j(t)$ be the circulation of $u(t)$ on $\Gamma_j$ (defined a.e. in $t$ thanks to Lemma \ref{defcirc}). Assume that $u$ is a weak solution  of the 2D incompressible Euler equations with initial data $u_0$.
Then $\gamma_j\in L^\infty_{\loc}([0,\infty))$ for all $j$. Moreover, $\om$ and $\gamma_j$, $j\in\{1,\dots,k\}$, verify the following identity:
\begin{multline}\label{weakvort}
\int_0^\infty\int_\Om\partial_t\varphi\d \om+ \sum_{j=1}^k\int_0^\infty \gamma_j(t) \partial_t\varphi(t,\cdot)\bigl|_{\Gamma_j}\d t+\sum_{j=1}^k\int_0^\infty\bigl(\gamma_j+\int_\Omega\w _j\d\omega\bigr)\int_\Omega  \bX_j \cdot\nabla\varphi \d\omega\\
+\int_0^\infty\iint_{\Om\times\Om}\frac{1}{2}
\Big( \nabla_x  \varphi (x) \cdot K (x,y) + \nabla_y\varphi(y)\cdot K(y,x)\Big)\d\omega(x)\d\omega(y)\\
+\int_\Om\varphi(0,\cdot)\d \om_0+\sum_{j=1}^k\gamma_{j}(0)\varphi(0,\cdot)\bigl|_{\Gamma_j}=0 
\end{multline}
for all test functions $\varphi\in C^\infty_c([0,\infty);\yinf)$.

Conversely, let $\om_0 \in \bm \cap \H$ and consider real numbers $\gamma_{j,0}$, $j=1,\ldots,k$. Suppose that $\om\in L^\infty\loc([0,\infty);\bm \cap \H)$ and $\gamma_j \in L^{\infty}_{\loc}([0,\infty))$ verify identity \eqref{weakvort} for all test functions $\varphi\in C^\infty_c([0,\infty);\yinf)$, with $\gamma_j(0)=\gamma_{j,0}$. Set 
$u=u(t)$ to be the vector field given by Proposition \ref{uomega} in terms of $\omega=\omega(t)$ and $\gamma_j=\gamma_j(t)$.
Then $u \in L^{\infty}_{\loc}([0,\infty));\L)$ and $u$ is a weak solution of the 2D incompressible Euler equations with initial data $u_0=u(0)$.

\end{theorem}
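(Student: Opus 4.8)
The plan is to reduce both implications to a single algebraic identity relating the quadratic velocity term to the bounded symmetric kernel appearing in \eqref{weakvort}, in the spirit of Schochet, but adapted to the decomposition of Proposition \ref{uomega}. The first ingredient is a dictionary between test objects. A divergence free field $\phi\in C^\infty_c([0,\infty)\times\Om)$ is compactly supported in the \emph{interior} of $\Om$, so its flux around every hole vanishes and it admits a single valued stream function; subtracting its (constant) value near infinity yields $\varphi\in C^\infty_c([0,\infty);\yinf)$ with $\phi=\nabla^\perp\varphi$, and conversely $\nabla^\perp\varphi$ is such a field for every $\varphi\in C^\infty_c([0,\infty);\yinf)$. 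Hence testing \eqref{defveleq} against $\phi=\nabla^\perp\varphi$ is equivalent to ranging over all admissible $\phi$. With this substitution the linear terms convert immediately: using the circulation formula \eqref{defcirceq} of Lemma \ref{defcirc} (applied to $\partial_t\varphi(t,\cdot)$ for the time derivative term and to $\varphi(0,\cdot)$ for the initial datum) together with the decomposition \eqref{defucirc}, the term $\int\partial_t\phi\cdot u$ produces the first two terms of \eqref{weakvort} and $\int u_0\cdot\phi(0,\cdot)$ the last two. I would also record here that $\gamma_j\in L^\infty_\loc$, since $\gamma_j(t)=-\int_\Om\varphi_j\d\om(t)-\int_\Om u(t)\cdot\nabla^\perp\varphi_j$ is controlled by $\|\om(t)\|_{\bm}$ and $\|u(t)\|_{\L}$; note also that $u\in\L$ divergence free and tangent to the boundary forces $\om\in\H$, so $K[\om]$ is defined.

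The core of the argument is the identity, valid for every $\varphi\in\yinf$ and $u=K[\om]+\sum_j(\gamma_j+\int_\Om\w_j\d\om)\bX_j$ with $\om\in\bm\cap\H$,
\[
\int_\Om\nabla(\nabla^\perp\varphi):u\otimes u
=-\sum_{j=1}^k\Bigl(\gamma_j+\int_\Om\w_j\d\om\Bigr)\int_\Om\bX_j\cdot\nabla\varphi\,\d\om
-\frac12\iint_{\Om\times\Om}\bigl(\nabla\varphi(x)\cdot K(x,y)+\nabla\varphi(y)\cdot K(y,x)\bigr)\,\d\om(x)\,\d\om(y).
\]
For smooth, compactly supported vorticity this is elementary: integrating by parts and using $(u\cdot\nabla)u=\nabla(\tfrac12|u|^2)+\om\,u^\perp$ together with $\nabla^\perp\varphi\equiv0$ near $\partial\Om$ reduces the left side to $-\int_\Om\om\,u\cdot\nabla\varphi$; splitting $u$ into $K[\om]$ plus its harmonic part and symmetrising $\int_\Om\om\,K[\om]\cdot\nabla\varphi$ in $x\leftrightarrow y$ gives the right side. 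To reach the measure case I would approximate with $\om^n=(\chi_n\om)\ast\eta_n$ and $u^n=K[\om^n]+\sum_j(\gamma_j+\int_\Om\w_j\d\om)\bX_j$, so the smooth identity holds for $u^n$. By \eqref{conv123}, $u^n\to u$ in $L^2(\Om)$, so the left side converges; the harmonic cross term converges by \eqref{conv1} since $\bX_j\cdot\nabla\varphi\in C^0_b(\overline\Om)$.

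The delicate passage, and the step I expect to be the main obstacle, is the convergence of the self interaction $\iint H_\varphi\,\d\om^n\,\d\om^n$, where $H_\varphi(x,y)=\tfrac12(\nabla\varphi(x)\cdot K(x,y)+\nabla\varphi(y)\cdot K(y,x))$. The decisive fact is that $H_\varphi$ is \emph{bounded}: this is precisely Proposition \ref{propestK}, applicable because $\nabla\varphi$ vanishes near $\partial\Om$ and so is normal to the boundary. Since $K(x,y)$ does not decay as $|y|\to\infty$, the integrand does not vanish at infinity, which is exactly why I would invoke the tightness-enhanced Lemma \ref{weakconvmeas} on $\Om\times\Om$ rather than its classical form. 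The product measures $\om^n\otimes\om^n$ are uniformly tight, since $|\om^n|(\Om\setminus B_R)\le|\om|(\Om\setminus B_{R-1})$; and, up to a subsequence, $|\om^n|\rightharpoonup\sigma$ with $\sigma$ continuous by Proposition \ref{aproxom}, so $|\om^n|\otimes|\om^n|\rightharpoonup\sigma\otimes\sigma$, a measure giving zero mass to the diagonal $\{x=y\}$. As $H_\varphi$ is continuous off the diagonal, Lemma \ref{weakconvmeas} yields $\iint H_\varphi\,\d\om^n\,\d\om^n\to\iint H_\varphi\,\d\om\,\d\om$, independently of the subsequence, establishing the identity for measure vorticity and hence the third and fourth terms of \eqref{weakvort}.

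Assembling these pieces proves both directions simultaneously, since \eqref{defveleq} tested against $\phi=\nabla^\perp\varphi$ equals, term by term and up to an overall sign, the identity \eqref{weakvort}. Reading the computation forward gives the first implication; reading it backward, using the bijection $\phi\leftrightarrow\varphi$ and Proposition \ref{uomega} to reconstruct $u\in L^\infty_\loc([0,\infty);\L)$ (with $\|K[\om(t)]\|_{L^2}=\|\om(t)\|_{\H}$ by Proposition \ref{charH-1norm}), gives the converse. The divergence free condition, tangency, and the time continuity needed to make sense of the initial data follow as in the remark after Definition \ref{eulerweaksol}, from the construction of $u$ and the integral identity itself.
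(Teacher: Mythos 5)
Your proposal is correct and follows essentially the same route as the paper: the bijection $\varphi\mapsto\nabla^\perp\varphi$ between $\yinf$ and compactly supported divergence free test fields, Lemma \ref{defcirc} for the linear terms, the key quadratic identity of Proposition \ref{propident} proved first for smooth compactly supported vorticity and then extended via the approximation $\om^n=(\chi_n\om)\ast\eta_n$ of Proposition \ref{aproxom}, with the self-interaction term handled exactly as in the paper through the boundedness and off-diagonal continuity of $H_\varphi$ (Proposition \ref{propestK}), tightness, continuity of the weak limit of $|\om^n|$, and Lemma \ref{weakconvmeas}. Your minor variants (using $(u\cdot\nabla)u=\nabla(\tfrac12|u|^2)+\om\,u^\perp$ in the smooth case, and freezing the harmonic coefficients at their limit values $\gamma_j+\int_\Om\w_j\d\om$ in the approximation) are cosmetic and do not change the argument.
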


We observe from Proposition \ref{propestK} that the function $ \nabla_x  \varphi (x) \cdot K (x,y) + \nabla_y\varphi(y)\cdot K(y,x)$ is bounded and continuous except on the diagonal. On the other hand, since $\om=\curl u\in L^\infty\loc([0,\infty);\bmp\cap\H)$ the measure $\om\otimes\om$ attaches no mass to the diagonal. We infer that all the terms appearing in \eqref{weakvort} are well defined. 

We remark that \eqref{weakvort} can be written as an equation in $\mathscr{D}'(\R_+)$:
\begin{multline}\label{weakvortr}
\partial_t\int_\Om\theta\d\om+\sum_{j=1}^k\gamma'_j \theta\bigl|_{\Gamma_j}
=\sum_{j=1}^k\bigl(\gamma_j+\int_\Omega\w _j\d\omega\bigr)\int_\Omega  \bX_j \cdot\nabla\theta \d\omega\\
+\iint_{\Om\times\Om}\frac{1}{2}
\Big( \nabla_x  \theta (x) \cdot K (x,y) + \nabla_y\theta(y)\cdot K(y,x)\Big)\d\omega(x)\d\omega(y)
\end{multline}
for all test functions $\theta\in \yinf$. In fact, \eqref{weakvort} is equivalent to \eqref{weakvortr} plus the initial conditions $\om\bigl|_{t=0}=\om_0$ and $\gamma_j(0)$ given.

\begin{proof}
To show Theorem \ref{equivth}, we observe first that the map $\varphi\mapsto\nabla^\perp\varphi=\phi$ gives a one to one correspondence from $\yinf$  into $C^\infty_{c,\sigma} (\Om)$. Therefore, the velocity formulation in the sense of distributions (see relation \eqref{defveleq}) can be written as follows
\begin{equation}\label{weakvel}
\int_\Om u_0 \cdot \nabla^\perp\varphi(0,\cdot)+\int_0^\infty\int_\Om  u\cdot \partial_t\nabla^\perp\varphi+\int_0^\infty\int_\Om  (u\otimes  u)\cdot\nabla\nabla^\perp\varphi=0
\end{equation}
for all test functions $\varphi\in C^\infty_c ([0,\infty);\yinf)$. Using Lemma \ref{defcirc}, we have that
\begin{equation}\label{ident1}
 \int_\Om u_0 \cdot \nabla^\perp\varphi(0,\cdot)=
-\int_\Om \varphi(0,\cdot)\d\om_0-\sum_{j=1}^k\gamma_{j,0} \varphi(0,\cdot)\bigl|_{\Gamma_j}
\end{equation}
and
\begin{equation}\label{ident2}
\int_0^\infty\int_\Om  u\cdot \partial_t\nabla^\perp\varphi
= -\int_0^\infty\int_\Om \partial_t\varphi\d\om-\sum_{j=1}^k\int_0^\infty\gamma_j \partial_t \varphi(t,\cdot)\bigl|_{\Gamma_j}. 
\end{equation}

In order to show Theorem \ref{equivth} it suffices to prove the following proposition.
\begin{proposition}\label{propident}
Let $u\in \L$ be a divergence free vector field tangent to the boundary such that $\omega=\curl u\in\bm\cap\H$. Let  $\gamma_1,\dots,\gamma_k$ be the circulations of $u$ on each of the connected components of the boundary $\Gamma_1,\dots,\Gamma_k$. Let $\varphi\in \yinfb$. The following identity holds true:
\begin{equation}\label{identity}
\int_\Omega (u\otimes u):\nabla\nabla^\perp\varphi
=-\iint_{\Omega\times\Omega} H_\varphi(x,y) \d\omega(x)\d\omega(y)-\sum_{j=1}^k\al_j\int_\Omega  \bX_j \cdot\nabla\varphi \d\omega
\end{equation}
where
\begin{equation*}
 H_\varphi(x,y) = \frac{1}{2}
\Big( \nabla_x  \varphi (x) \cdot K (x,y) + \nabla_y\varphi(y)\cdot K(y,x)\Big)\quad\text{and}\quad 
\al_j=\gamma_j+\int_\Omega\w _j\d\omega.
\end{equation*}
\end{proposition}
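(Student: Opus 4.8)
The plan is to prove \eqref{identity} first for $\om\in C^\infty_c(\Om)$ by a direct integration by parts in the spirit of Schochet, and then to obtain the general case $\om\in\bm\cap\H$ through the approximation $\om^n=(\chi_n\om)\ast\eta_n$ of Proposition \ref{aproxom}. Throughout I write $u=K[\om]+\sum_{j=1}^k\al_j\bX_j$ as in Proposition \ref{uomega} and set $\phi=\nabla^\perp\varphi$; since $\varphi\in\yinfb$ is constant on each $\Gamma_j$ and has bounded support, $\phi$ is tangent to $\partial\Om$ and has bounded support, while $\nabla\varphi\big|_{\partial\Om}$ is normal to $\partial\Om$.

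For $\om\in C^\infty_c(\Om)$ the field $u$ is smooth, bounded and tangent to $\partial\Om$. First I would integrate by parts to move both derivatives off $\varphi$, writing $\int_\Om(u\otimes u):\nabla\phi=-\int_\Om\phi\cdot(u\cdot\nabla)u$; the boundary contribution $\int_{\partial\Om}(u\cdot\phi)(u\cdot\hat n)\d S$ vanishes because $u\cdot\hat n=0$, and there is no contribution from infinity since $\phi$ has bounded support. Then I would use the two-dimensional identity $(u\cdot\nabla)u=\tfrac12\nabla|u|^2+\om\,u^\perp$ together with $\dive\phi=0$ and $\phi\cdot\hat n=0$ to discard the gradient part, which leaves $-\int_\Om\om\,\phi\cdot u^\perp=-\int_\Om\om\,u\cdot\nabla\varphi$ (using $\nabla^\perp\varphi\cdot u^\perp=\nabla\varphi\cdot u$). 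Substituting $u=K[\om]+\sum_j\al_j\bX_j$ decomposes this into $-\int_\Om\om\,K[\om]\cdot\nabla\varphi$ and $-\sum_j\al_j\int_\Om\bX_j\cdot\nabla\varphi\d\om$; in the first term I would insert $K[\om](x)=\int_\Om K(x,y)\om(y)\d y$ and symmetrize in $x\leftrightarrow y$ (the integrand being absolutely integrable, as $\om$ is smooth and $|K(x,y)|=O(|x-y|^{-1})$), producing exactly $-\iint_{\Om\times\Om}H_\varphi\d\om\d\om$. This establishes \eqref{identity} for smooth compactly supported $\om$.

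To pass to the limit I would take $u^n=K[\om^n]+\sum_{j=1}^k\al_j\bX_j$, so that $\om^n=\curl u^n\in C^\infty_c(\Om)$ and, by the circulation formula of Proposition \ref{propdefK}, the constants entering \eqref{identity} for $u^n$ are precisely the fixed numbers $\al_j=\gamma_j+\int_\Om\w_j\d\om$. By \eqref{conv123}, $u^n\to u$ strongly in $L^2(\Om)$, so $u^n\otimes u^n\to u\otimes u$ in $L^1$ on the compact support of $\nabla\phi$, and pairing against the bounded field $\nabla\phi$ shows that the left-hand sides converge; the harmonic terms converge by \eqref{conv1}, since each $\bX_j\cdot\nabla\varphi$ belongs to $C^0_b(\overline\Om)$.

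The main obstacle is the convergence of the quadratic term $\iint H_\varphi\d\om^n\d\om^n\to\iint H_\varphi\d\om\d\om$, because $H_\varphi$ is continuous only off the diagonal. I would handle it with Lemma \ref{weakconvmeas} applied on $\Om\times\Om$ to $\mu_n=\om^n\otimes\om^n$ and $f=H_\varphi$: the kernel $H_\varphi$ is bounded, by Proposition \ref{propestK} with $f=\nabla\varphi$, and continuous away from the diagonal; $(\mu_n)$ is tight because $\om\in\bm$; and $\mu_n\rightharpoonup\om\otimes\om$ follows from \eqref{conv1} together with tightness. Finally, along any subsequence for which $|\om^n|\rightharpoonup\lambda$, Proposition \ref{aproxom} guarantees that $\lambda$ is a continuous measure, so $|\mu_n|=|\om^n|\otimes|\om^n|\rightharpoonup\lambda\otimes\lambda$, a measure assigning no mass to the diagonal; hence $H_\varphi$ is continuous $\nu$-almost everywhere with $\nu=\lambda\otimes\lambda$, and Lemma \ref{weakconvmeas} gives the convergence along that subsequence. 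As the limit $\iint H_\varphi\d\om\d\om$ is independent of the subsequence, the whole sequence converges, and combining the three limits yields \eqref{identity} for every $\om\in\bm\cap\H$.
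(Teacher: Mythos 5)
Your proof is correct and takes essentially the same route as the paper: establish \eqref{identity} for $\om\in C^\infty_c(\Om)$ by integration by parts (killing the boundary terms via tangency and the constancy of $\varphi$ on each $\Gamma_j$) and symmetrization of the Biot--Savart term, then pass to the limit along the approximation of Proposition \ref{aproxom}, using \eqref{conv1} and \eqref{conv123} for the linear and quadratic velocity terms, and Proposition \ref{propestK}, the continuity of weak limits of $|\om^n|$, tightness, and Lemma \ref{weakconvmeas} for the term $\iint H_\varphi\,\d\om^n\d\om^n$. The only cosmetic differences are that you reduce to $-\int_\Om \om\, u\cdot\nabla\varphi$ via the identity $(u\cdot\nabla)u=\tfrac12\nabla|u|^2+\om u^\perp$ rather than via $\curl(u\cdot\nabla u)=u\cdot\nabla\om$ as in the paper, and that you fix the coefficients $\al_j$ in $u^n$ (so its circulations vary with $n$) whereas the paper fixes the circulations $\gamma_j$ (so its coefficients $\al_j^n$ converge to $\al_j$).
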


Indeed, assume that this proposition is proved. Let us first suppose that $u_0\in \L $ is divergence free, tangent to the boundary and such that $\om_0 = \mbox{ curl } u_0 \in\bm$. Let $u\in L^\infty\loc([0,\infty);\L)$ be a divergence free vector field, tangent to the boundary,  and such that $\om=\curl u\in L^\infty\loc([0,\infty);\bm)$. Let $\gamma_j(t)$ be the circulation of $u(t)$ on $\Gamma_j$. From the definition of the circulation, see relation \eqref{defcirceq}, we immediately see that $\gamma_j\in L^\infty_{\loc}([0,\infty))$. Assume that $u$ is a weak solution of the 2D incompressible Euler equations with initial data $u_0$. Given that $u\in L^\infty\loc([0,\infty);\L)$ and $\om\in L^\infty\loc([0,\infty);\bm\cap\H)$ we have that $u(t)\in \L$ and $\om(t)=\curl u(t)\in \bm\cap\H$ for almost all times $t$. For those times $t$ we can apply Proposition \ref{propident} to obtain relation \eqref{identity} for $u(t)$ and $\om(t)$. Integrating this relation in time implies that 
\begin{multline}\label{ident3}
\int_0^\infty\int_\Om  (u\otimes  u)\cdot\nabla\nabla^\perp\varphi
= -\int_0^\infty\iint_{\Om\times\Om}\frac{1}{2}
\Big( \nabla_x  \varphi (x) \cdot K (x,y) + \nabla_y\varphi(y)\cdot K(y,x)\Big)\d\omega(x)\d\omega(y)\\
-\sum_{j=1}^k\int_0^\infty\bigl(\gamma_j+\int_\Omega\w _j\d\omega\bigr)\int_\Omega  \bX_j \cdot\nabla\varphi \d\omega\d t.
\end{multline}

Combining relations \eqref{ident1}, \eqref{ident2} and \eqref{ident3} shows that the left-hand side of \eqref{weakvort} is equal up to a sign to the left-hand side of \eqref{weakvel}. 

Conversely, let $\om_0 \in \bm \cap \H$ and consider  $\gamma_{j,0} \in \real$, $j=1,\ldots,k$. Suppose that $\om\in L^\infty\loc([0,\infty);\bm \cap \H)$ and $\gamma_j \in L^{\infty}_{\loc}([0,\infty))$ verify identity \eqref{weakvort} for all test functions $\varphi\in C^\infty_c([0,\infty);\yinf)$, with $\gamma_j(0)=\gamma_{j,0}$. Set $u=u(t)$ to be the vector field given by Proposition \ref{uomega} in terms of $\omega=\omega(t)$ and $\gamma_j=\gamma_j(t)$. We begin by noting that, by Proposition \ref{charH-1norm}, together with the hypothesis $\om\in L^\infty\loc([0,\infty); \H)$, it follows that $K[\om] \in  L^\infty\loc([0,\infty);L^2(\Om))$. Furthermore, since $\om\in L^\infty\loc([0,\infty);\bm  )$ and $\gamma_j \in L^{\infty}_{\loc}([0,\infty))$, we find that $u$ given by Proposition \ref{uomega}, 
\[u = K[\om] + \sum_{i=1}^k (\gamma_j + \int_{\Om} \mathbf{w}_j\,\mathrm{d}\om) \mathbf{X}_j,\] 
belongs to $ L^{\infty}_{\loc}([0,\infty));\L)$. Next, let $\phi \in C^{\infty}_c([0,\infty)\times\Om)$ be a divergence free test vector field. Clearly, there exists $\varphi \in C^{\infty}_c([0,\infty);\yinf)$ such that $\phi = \nabla^{\perp} \varphi$. With this notation the identities \eqref{ident1}, \eqref{ident2}
hold true, and Proposition \ref{propident} implies, as before, that \eqref{ident3} is valid as well. This trivially implies that $u$ is a 
weak solution of the 2D incompressible Euler equations with initial data $u_0=u(0)$.

This completes the proof of Theorem \ref{equivth}, once we establish Proposition \ref{propident}. 
\end{proof}

We prove now  Proposition \ref{propident}.
\begin{proof}[Proof of Proposition \ref{propident}]
We show first \eqref{identity} when $u$ and $\om$ are smooth. More precisely, we assume that $\om\in C^\infty_c (\Om)$. Then, by Proposition \ref{milton} we have that
\begin{equation*}
  u=K[\om]+\sum_{j=1}^k \al_j\X_j,\qquad \al_j=\gamma_j+\int_\Om\w_j\om.
\end{equation*}

We integrate by parts the left-hand side of \eqref{identity}:
\begin{equation*}
\int_\Om (u\otimes u):\nabla\nabla^\perp\varphi
=-  \int_\Om \dive(u\otimes u)\cdot\nabla^\perp\varphi
=-  \int_\Om u\cdot\nabla u\cdot\nabla^\perp\varphi
=\int_\Om \curl(u\cdot\nabla u)\varphi 
-\int_{\partial\Om} u\cdot\nabla u\cdot\hat n^\perp\varphi.
\end{equation*}
We claim that the boundary integral above vanishes. Indeed, let $C_j=\varphi\bigl|_{\Gamma_j}$ and write $u\bigl|_{\Gamma_j}=\beta\hat n^\perp$ with $\beta$ a scalar function defined on $\Gamma_j$. Then
\begin{equation*}
\int_{\Gamma_j} u\cdot\nabla u\cdot\hat n^\perp\varphi
= C_j \int_{\Gamma_j} \beta \hat n^\perp\cdot\nabla u\cdot\hat n^\perp
= C_j \int_{\Gamma_j} \hat n^\perp\cdot\nabla u\cdot u 
= \frac{C_j}{2} \int_{\Gamma_j} \hat n^\perp\cdot\nabla (|u|^2)
=0
\end{equation*}
since $\Gamma_j$ is a closed curve. Since $\curl(u\cdot\nabla u)=u\cdot\nabla\om$ we infer that
\begin{multline*}
 \int_\Om (u\otimes u):\nabla\nabla^\perp\varphi
=\int_\Om u\cdot\nabla\om\varphi 
=-\int_\Om u\cdot\nabla\varphi\om 
=-\int_\Om K[\om]\cdot\nabla\varphi\om 
-\sum_{j=1}^k\al_j\int_\Om \X_j\cdot\nabla\varphi\om \\
=-\iint_{\Om\times\Om}K(x,y)\cdot\nabla\varphi(x)\om(x)\om(y)\d x\d y 
-\sum_{j=1}^k\al_j\int_\Om \X_j\cdot\nabla\varphi\om. 
\end{multline*}
Symmetrizing the term involving the kernel of the Biot-Savart law $K(x,y)$ implies \eqref{identity}.

\bigskip

We prove now the general case. Let $\om^n$ be defined like in Proposition \ref{aproxom} and let us introduce
\begin{equation*}
u^n=  K[\omega^n]+\sum_{j=1}^k\bigl(\gamma_j+\int_\Omega\w _j\omega^n\bigr)  \bX_j.
\end{equation*}
From Proposition \ref{uomega}, we know that
\begin{equation*}
u=K[\omega]+\sum_{j=1}^k\bigl(\gamma_j+\int_\Omega\w _j\d\om \bigr)  \bX_j.
\end{equation*}

\medskip

Since $\omega^n$ and $u^n$ are smooth, we have that 
\begin{equation*}
\int_\Omega (u^n\otimes u^n):\nabla\nabla^\perp\varphi
=-\iint_{\Omega\times\Omega} H_\varphi(x,y) \omega^n(x)\omega^n(y)\d x\d y-\sum_{j=1}^k\al^n_j\int_\Omega  \bX_j \cdot\nabla\varphi \ \omega^n
\end{equation*}
where
\begin{equation*}
\al^n_j=\gamma_j+\int_\Omega\w _j\omega^n.
\end{equation*}
Given relations \eqref{conv2}, \eqref{conv1}, and \eqref{conv123}  it not difficult to pass to the limit in the above relation and obtain \eqref{identity}. First, from \eqref{conv1} we infer that $\al^n_j\to \al_j$ as $n\to\infty$ for all $j\in\{1,\dots,k\}$. Next, from \eqref{conv123} we infer that
\begin{equation*} 
u^n\to u \quad\text{strongly in }L^2\loc(\overline\Om).  
\end{equation*}
so that 
\begin{equation*}
 \int_\Omega (u^n\otimes u^n):\nabla\nabla^\perp\varphi  \stackrel{n\to\infty}\longrightarrow \int_\Omega (u\otimes u):\nabla\nabla^\perp\varphi.  
\end{equation*}

It remains to show that
\begin{equation}\label{convH}
 \iint_{\Omega\times\Omega} H_\varphi(x,y) \omega^n(x)\omega^n(y)\d x\d y
 \stackrel{n\to\infty}\longrightarrow 
\iint_{\Omega\times\Omega} H_\varphi(x,y) \d\omega(x)\d\omega(y).
\end{equation}

By Proposition \ref{propestK} we have that the function $H_\varphi(x,y)$ is bounded and continuous outside the diagonal. Since $\om^n$ is bounded in $L^1(\Om)$, there is a subsequence $\om^{n_p}$ such that $|\om^{n_p}|$ converges weakly in the sense of measures to some positive bounded measure $\mu$. From Proposition \ref{aproxom}, we know that the measure $\mu$ is continuous. Therefore, the mass that $\mu(x)\otimes\mu(y)$ attaches to the diagonal $D=\{(x,x);\ x\in\Om\}$ vanishes since
\begin{equation*}
\mu(x)\otimes\mu(y)(D)
=\iint_D1\d\mu(x)\d\mu(y)
=\int_\Om\mu(\{x\})\d\mu(x)
=0.  
\end{equation*}
From the definition of $\om^n$ it is easy to observe that 
$$
\int_{\Sigma_\ep}|\om^n|\leq |\omega|(\Sigma_{\ep+\frac1{2n}})
\quad\text{and}\quad 
\int_{B_R^c}|\om^n|\leq |\omega|(B_{R-1}).
$$
This implies that the sequence of measures $\om^n$ is tight. We conclude that the convergence stated in relation \eqref{convH} holds true for the subsequence $\om^{n_p}$ as a consequence of Lemma \ref{weakconvmeas}. This completes the proof of Proposition \ref{propident}.  
\end{proof}

\begin{definition} \label{weakvortformul}
Let $\om_0 \in \bm \cap \H$ and consider real numbers $\gamma_{j,0}$, $j=1,\ldots,k$. We say that the ($k+1$)-tuple $(\om, \gamma_1,\ldots,\gamma_k)$, is a solution of the weak vorticity formulation of the incompressible 2D Euler equations in $\Om$, with initial vorticity $\om_0$ and initial circulations $\gamma_{j,0}$, if  $\om\in L^\infty\loc([0,\infty);\bm \cap \H)$, if  $\gamma_j \in L^{\infty}_{\loc}([0,\infty))$, $j=1,\ldots,k$,  and if the  identity \eqref{weakvort}, with $\gamma_j(0)=\gamma_{j,0}$, holds true for every test function $\varphi\in C^\infty_c([0,\infty);\yinf)$.
\end{definition}

\begin{remark}
We have shown, in Theorem \ref{equivth}, that there is a one-to-one correspondence between weak solutions of the incompressible 2D Euler equations in $\Om$ and solutions of the weak vorticity formulation. Notice that the weak vorticity formulation allows for weak solutions for which the Kelvin circulation theorem is no longer valid along boundary components. 
\end{remark}

Next, our aim  is to collect as much information as possible on the circulations $\gamma_j$ of a weak solution. Suppose that $(\om, \gamma_1,\ldots,\gamma_k)$ is a solution of the weak vorticity formulation as in Definition \ref{weakvortformul}. Then the circulations $\gamma_j$, $j=1,\ldots,k$ can be expressed uniquely in terms of $\om$. Indeed, take in \eqref{weakvortr} successive test functions $\theta_{\ell} \in \yinf$, vanishing in a neighborhood of the boundary except in the neighborhood of one of the $\Gamma_{\ell}$, where it equals 1. We then obtain the following linear system of ODEs for the circulations: 

\begin{multline*} 
\partial_t\int_\Om\theta_{\ell}\d\om+ \gamma'_{\ell} 
=\sum_{j=1}^k\bigl(\gamma_j+\int_\Omega\w _j\d\omega\bigr)\int_\Omega  \bX_j \cdot\nabla\theta_{\ell} \d\omega\\
+\iint_{\Om\times\Om}\frac{1}{2}
\Big( \nabla_x  \theta_{\ell} (x) \cdot K (x,y) + \nabla_y\theta_{\ell}(y)\cdot K(y,x)\Big)\d\omega(x)\d\omega(y),
\end{multline*}

$\ell = 1,\ldots, k$. 

Clearly, this system of ODEs for the circulations has a unique solution. So we can express the circulations $\gamma_j$ in terms of $\omega$. Plugging the formula for the circulations in $\gamma_j$ in \eqref{weakvortr} and taking a test function $\theta\in C^\infty_c(\Omega)$, it is possible to obtain an equation in the sense of distributions for $\omega$ only. However, this equation is not very enlightening.

\begin{theorem} \label{en-e-thm}
Let  $(\om, \gamma_1,\ldots,\gamma_k)$ be a solution of the weak vorticity formulation as in Definition \ref{weakvortformul}. Then
the following conservation law holds true: 
\begin{equation} \label{en-e}
\int_\Om 1\d\om+\sum_{j=1}^k\gamma_j(t)=\int_\Om1\d\om_0+\sum_{j=1}^k\gamma_{j,0},
\end{equation}
for all $t\geq0$.
\end{theorem}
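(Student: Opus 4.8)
The plan is to test the weak vorticity formulation \eqref{weakvort} against a sequence of cutoff functions that increase to the constant $1$, and to show that the two ``forcing'' terms of \eqref{weakvort} --- the one carrying the harmonic fields $\bX_j$ and the one carrying the Biot--Savart kernel $K$ --- vanish in the limit. What survives is a purely distributional identity in time expressing that $\int_\Om 1\d\om+\sum_j\gamma_j(t)$ is conserved and equals its initial value.

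Since $\partial\Om$ is bounded, I fix $R_0$ with $\partial\Om\subset B_{R_0}$ and, for $R>R_0+1$, set $\theta_R(x)=\zeta(|x|/R)$, where $\zeta\in C^\infty([0,\infty);[0,1])$ equals $1$ on $[0,1]$ and $0$ on $[2,\infty)$. Then $\theta_R\in\yinf$: it has bounded support, it is identically $1$ near each $\Gamma_j$ (so $\theta_R\big|_{\Gamma_j}=1$ and $\nabla\theta_R$ vanishes on $\partial\Om$), and it satisfies $\|\nabla\theta_R\|_{L^\infty}\leq C/R$, $\|\nabla^2\theta_R\|_{L^\infty}\leq C/R^2$, whence $\|\nabla\theta_R\|_{W^{1,\infty}(\Om)}\leq C/R$. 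I would insert the space-time test function $\varphi(t,x)=\beta(t)\theta_R(x)$, with $\beta\in C^\infty_c([0,\infty))$, into \eqref{weakvort}. Using $\theta_R\big|_{\Gamma_j}=1$, the first two terms of \eqref{weakvort} collapse to $\int_0^\infty\beta'(t)\bigl(\int_\Om\theta_R\d\om+\sum_j\gamma_j(t)\bigr)\d t$ and the two initial terms to $\beta(0)\bigl(\int_\Om\theta_R\d\om_0+\sum_j\gamma_{j,0}\bigr)$.

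It remains to send $R\to\infty$ in the two surviving terms. For the harmonic term I would use property (v) of \eqref{propsPsij}, namely $|\bX_j(x)|\leq C/|x|$, together with $\supp\nabla\theta_R\subset B_{2R}\setminus B_R$ and $|\nabla\theta_R|\leq C/R$, to get $\bigl|\int_\Om\bX_j\cdot\nabla\theta_R\d\om\bigr|\leq (C/R^2)\,|\om|(\Om)\to0$; as the coefficients $\gamma_j+\int_\Om\w_j\d\om$ stay bounded on $\supp\beta$, this whole term disappears. The kernel term is the genuine obstacle, since $K$ is singular on the diagonal and $\om$ need not have compact support, so a direct appeal to \eqref{estk} is insufficient. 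The decisive tool here is Proposition \ref{propestK}, applied to $f=\nabla\theta_R$: this is admissible because $\nabla\theta_R$ is the gradient of $\theta_R\in\yinfb$ and vanishes near $\partial\Om$ (hence is normal to the boundary there), and it yields the \emph{uniform} bound $|H_{\theta_R}(x,y)|\leq \tfrac{M_2}{2}\|\nabla\theta_R\|_{W^{1,\infty}(\Om)}\leq C/R$ for all $x\neq y$, which absorbs the diagonal singularity. Consequently $\bigl|\iint_{\Om\times\Om}H_{\theta_R}\d\om\d\om\bigr|\leq (C/R)\,|\om|(\Om)^2\to0$.

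Passing these two limits inside the $t$-integral is justified by the uniform-in-$t$ control of $|\om(t)|(\Om)$ and $|\gamma_j(t)|$ on $\supp\beta$, and dominated convergence gives $\int_\Om\theta_R\d\om\to\int_\Om 1\d\om$ and $\int_\Om\theta_R\d\om_0\to\int_\Om 1\d\om_0$. The identity \eqref{weakvort} therefore becomes, in the limit,
\[
\int_0^\infty\beta'(t)g(t)\d t+\beta(0)\,g_0=0\qquad\text{for all }\beta\in C^\infty_c([0,\infty)),
\]
where $g(t)=\int_\Om 1\d\om+\sum_j\gamma_j(t)$ and $g_0=\int_\Om 1\d\om_0+\sum_j\gamma_{j,0}$. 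This is exactly the weak form of $g'=0$ on $(0,\infty)$ supplemented with $g(0)=g_0$, so $g\equiv g_0$ and \eqref{en-e} follows. Apart from the kernel estimate, the only real care needed is the bookkeeping of these uniform time bounds so that the limits commute with the time integration.
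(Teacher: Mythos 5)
Your proof is correct and follows essentially the same route as the paper's: both test \eqref{weakvort} with a product of a temporal cutoff and a large-scale radial spatial cutoff equal to $1$ near $\partial\Om$, kill the harmonic term using the bound on $\bX_j$ together with $\|\nabla\theta_R\|_{L^\infty}=\cO(1/R)$, and---the key point---kill the kernel term uniformly via Proposition \ref{propestK} applied to $f=\nabla\theta_R$, which gives $|H_{\theta_R}(x,y)|\leq C\|\nabla\theta_R\|_{W^{1,\infty}(\Om)}\to 0$ independently of the diagonal singularity. The only cosmetic difference is that your harmonic-term estimate exploits the decay $|\bX_j(x)|\leq C/|x|$ on the annulus to get $\cO(1/R^2)$, where the paper settles for $\cO(1/R)$.
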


\begin{proof}
Let $\rho\in C^\infty_c (\R_+)$ and $\theta\in C^\infty_c (\R^2;[0,1]$ such that $\theta(x)=1$ if $|x|\leq 1$ and $\theta(x)=0$ if $|x|\geq 2$. Set $\theta_n(x)=\theta(x/n)$. We use \eqref{weakvort} with $\varphi=\rho(t)\theta_n(x)$ and $n$ sufficiently large to obtain
\begin{multline}\label{conservation}
\int_0^\infty\rho'\int_\Om\theta_n\d \om+ \sum_{j=1}^k\int_0^\infty \gamma_j \rho' +\sum_{j=1}^k\int_0^\infty\rho\bigl(\gamma_j+\int_\Omega\w _j\d\omega\bigr)\int_\Omega  \bX_j \cdot\nabla\theta_n\d\omega\\
+\int_0^\infty\rho\iint_{\Om\times\Om}\frac{1}{2}
\Big( \nabla_x  \theta_n (x) \cdot K (x,y) + \nabla_y\theta_n(y)\cdot K(y,x)\Big)\d\omega(x)\d\omega(y)\\
+\rho(0)\int_\Om\theta_n\d \om_0+\rho(0)\sum_{j=1}^k\gamma_{j,0}=0.
\end{multline}
 
We let now $n\to\infty$. By the dominated convergence theorem we have that
\begin{equation*}
 \int_0^\infty\rho'\int_\Om\theta_n\d \om\to  \int_0^\infty\rho'\int_\Om1\d \om
\quad\text{and}\quad 
\int_\Om\theta_n\d \om_0\to \int_\Om1\d \om_0.
\end{equation*}
Moreover
\begin{equation*}
 \Bigl|\int_\Omega  \bX_j \cdot\nabla\theta_n\d\omega\Bigr|
\leq \| \bX_j\|_{L^\infty}\| \nabla\theta_n\|_{L^\infty}\int_\Om1\d\om=\cO(1/n)
\end{equation*}
uniformly in time. Using relation \eqref{estK} we also have that
\begin{equation*}
| \nabla_x  \theta_n (x) \cdot K (x,y) + \nabla_y\theta_n(y)\cdot K(y,x)| \leq M_2\|\nabla\theta_n\|_{W^{1,\infty}(\Om)}=\cO(1/n)
\end{equation*}
uniformly in $t$ and $x$. We conclude that after passing to the limit $n\to\infty$ in \eqref{conservation} we get
\begin{equation*}
 \int_0^\infty\rho'(\int_\Om1\d \om+ \sum_{j=1}^k\gamma_j)
+\rho(0)(\int_\Om1\d \om_0+\sum_{j=1}^k\gamma_{j,0})=0. 
\end{equation*}
This completes the proof of \eqref{en-e}.

\end{proof}

\section{Weak solutions obtained by mollifying initial data}  

In this section we will specify those properties enjoyed by weak solutions which are weak limits of smooth solutions, obtained by smoothing out initial data. We begin by recalling a stronger notion of weak solution, introduced by two of the authors in \cite{LNX01} and \cite{LNX06}, called {\it boundary-coupled weak solution} for flows in a domain with a single, infinite, connected boundary. In that context a boundary-coupled weak solution is a solution satisfying the weak vorticity formulation \eqref{weakvort} for the wider class of admissible test functions  vanishing on the boundary, but not necessarily their derivatives (instead of vanishing in a  neighborhood of the boundary as assumed in relation \eqref{weakvort}).

\begin{definition} 
Let $\om_0 \in \bm \cap \H$ and consider real numbers $\gamma_{j,0}$, $j=1,\ldots,k$. We say that the ($k+1$)-tuple $(\om, \gamma_1,\ldots,\gamma_k)$, is a boundary-coupled weak solution of the incompressible 2D Euler equations in $\Om$, with initial vorticity $\om_0$ and initial circulations $\gamma_{j,0}$, if  $\om\in L^\infty\loc([0,\infty);\bm \cap \H)$, if  $\gamma_j \in L^{\infty}_{\loc}([0,\infty))$, $j=1,\ldots,k$,  and if the  identity \eqref{weakvort}, with $\gamma_j(0)=\gamma_{j,0}$, holds true for every test function $\varphi\in C^\infty_c([0,\infty);\yinfb)$. 
\end{definition}

We will see that a sufficient condition for a weak limit of smooth solutions to satisfy this stronger notion  is  that the circulations $\gamma_j$ be conserved. 

\begin{theorem} \label{thmsomething}
Let $\om_0 \in (\bmp + L^1(\Om))\cap \H$, and consider $\gamma_0^j \in \real$, $j=1,\ldots,k$. Let $(\om,\gamma_1,\ldots,\gamma_k)$ be a solution of the weak vorticity formulation as constructed  in Theorem \ref{theodelort}, \textit{i.e.} by smoothing out the initial vorticity and passing to the limit using the argument given by Delort. Then we have that 
\begin{enumerate}
\item \label{en-a}  $\gamma_j(t)\geq \gamma_{j,0}$ for almost every $t\geq 0$.
\item \label{en-c}  If $\gamma_j(t)\equiv\gamma_{j,0}$ for almost  all $t>0$ then the solution is a boundary-coupled weak solution. 
\end{enumerate}
\end{theorem}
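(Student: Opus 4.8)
The plan is to run the entire argument along the approximating sequence of smooth solutions $u^n$ constructed in the proof of Theorem~\ref{theodelort}, and to read off both statements from the way vorticity mass may concentrate on $\partial\Om$ in the limit. Recall that $u^n$ has vorticity $\om^n=\curl u^n$ obtained by transporting $\om_0^n=(\chi_n\om_0)\ast\eta_n$, that $u^{n_p}-u\to0$ weak$\ast$ in $L^\infty\loc([0,\infty);L^2(\Om))$, and that, by the Kelvin circulation theorem, the circulation of $u^n$ on $\Gamma_j$ is the time-independent constant $\gamma_j^n=\gamma_{j,0}+\int_\Om\w_j\d\om_0-\int_\Om\w_j\om_0^n$. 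Since $\w_j\in C^0_b(\overline\Om)$, relation~\eqref{conv1} applied to $\om_0^n$ gives $\gamma_j^n\to\gamma_{j,0}$. The decisive structural fact is the splitting $\om^n=\om^{n,+}+\om^{n,-}$ from the proof of Theorem~\ref{theodelort}: $\om^{n,-}\le0$ is uniformly integrable (Dunford--Pettis), hence does not concentrate, while $\om^{n,+}\ge0$ may. Passing to a further subsequence I would view the $\om^{n_p}$ as measures on $\overline\Om$ and write their weak$\ast$ limit as $\om+\sigma$, where $\sigma\ge0$ is supported on $\partial\Om$ (all concentrated mass comes from the nonnegative part), and set $m_j\ge0$ to be the (time-dependent) mass that $\sigma$ places on $\Gamma_j$. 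Likewise $|\om^{n_p}|\to\mu+\sigma$ with $\mu$ a continuous measure by Proposition~\ref{aproxom}.

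For part~\ref{en-a}, fix the cutoff $\varphi_j\in C^\infty_c(\overline\Om)$ with $\varphi_j\equiv1$ near $\Gamma_j$ and $\equiv0$ near the other components used in Lemma~\ref{defcirc}, so that $\gamma_j(t)=-\int_\Om\varphi_j\d\om(t)-\int_\Om u(t)\cdot\nabla^\perp\varphi_j$ and, for the smooth flow, $\gamma_j^{n_p}=-\int_\Om\varphi_j\om^{n_p}-\int_\Om u^{n_p}\cdot\nabla^\perp\varphi_j$. Here $\nabla^\perp\varphi_j\in C^\infty_c(\Om)$, so the velocity term passes to the limit by the weak$\ast$ $L^2$ convergence, whereas the vorticity term picks up the boundary defect $\int_\Om\varphi_j\om^{n_p}\to\int_\Om\varphi_j\d\om+m_j$ precisely because $\varphi_j=1$ on $\Gamma_j$. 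Combining with $\gamma_j^{n_p}\to\gamma_{j,0}$ yields, as an identity in $\mathscr{D}'(\R_+)$, $\gamma_j(t)=\gamma_{j,0}+m_j(t)$, and since $m_j\ge0$ this is exactly $\gamma_j(t)\ge\gamma_{j,0}$ for a.e.\ $t$.

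For part~\ref{en-c}, the hypothesis $\gamma_j\equiv\gamma_{j,0}$ forces $m_j\equiv0$ for every $j$; because $\sigma\ge0$ carries \emph{all} the concentrated total variation, this means $\mu+\sigma=\mu$ charges no point of $\partial\Om$, i.e.\ the limit of $|\om^{n_p}|$ is a continuous measure on all of $\overline\Om$. I would then pass to the limit in the weak vorticity identity~\eqref{weakvort} written for the smooth solutions $u^{n_p}$ against a test function $\varphi\in C^\infty_c([0,\infty);\yinfb)$; this identity holds for $u^{n_p}$ and the wider class $\yinfb$ since for a smooth flow the velocity identity is valid against the tangent field $\nabla^\perp\varphi$ (the pressure term drops because $\nabla^\perp\varphi\cdot\hat n=0$) and Proposition~\ref{propident} applies verbatim. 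The linear terms converge by Lemma~\ref{defcirc} and Remark~\ref{defcircremark} (legitimate for $\yinfb$ since $u\in L^1\loc(\overline\Om)$), and the terms $\int_\Om\bX_j\cdot\nabla\varphi\,\om^{n_p}$ converge because $\bX_j\cdot\nabla\varphi$ is continuous, bounded, and vanishes on $\partial\Om$ (there $\bX_j$ is tangent and $\nabla\varphi$ normal).

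The crux is the quadratic term $\iint_{\Om\times\Om}H_\varphi\,\d\om^{n_p}\,\d\om^{n_p}$. By Proposition~\ref{propestK} the kernel $H_\varphi$ is bounded and, as noted after Theorem~\ref{equivth}, continuous off the diagonal; moreover, from~\eqref{Kbord} together with $\nabla\varphi$ being normal to $\partial\Om$, $H_\varphi$ vanishes whenever $x$ or $y$ lies on $\partial\Om$. Since $m_j\equiv0$ makes the limit of $|\om^{n_p}|$ continuous on $\overline\Om$, the product measure attaches no mass to the diagonal of $\overline\Om\times\overline\Om$, so Lemma~\ref{weakconvmeas} (its tightness hypothesis holding as in the proof of Proposition~\ref{propident}) applies and gives convergence to $\iint_{\Om\times\Om}H_\varphi\,\d\om\,\d\om$, the boundary contributions being killed by the vanishing of $H_\varphi$ there. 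Assembling these limits turns~\eqref{weakvort} for $u^{n_p}$ into~\eqref{weakvort} for $(\om,\gamma_1,\dots,\gamma_k)$ valid for all $\varphi\in\yinfb$, which is the boundary-coupled property. I expect this last step to be the main obstacle: when some $m_j\ne0$ the limit of $|\om^{n_p}|$ charges $\partial\Om$, the product measure may then charge the boundary diagonal where $H_\varphi$ is discontinuous, and Lemma~\ref{weakconvmeas} breaks down --- which is the analytic reflection of genuine loss of boundary coupling. Controlling this concentration through the sign of the singular part, and thereby translating \emph{conservation of circulation} into \emph{no concentration of total variation on $\partial\Om$}, is the heart of the argument.
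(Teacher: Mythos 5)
Your part (a) is correct and is essentially the paper's own argument: extract a weak-$\ast$ limit $\omb$ of $\om^{n_p}$ as measures on $\overline\Om$, identify its interior part with $\om$, observe via the splitting $\om^{n}=\om^{n,+}+\om^{n,-}$ (with $\om^{n,-}$ uniformly integrable) that any boundary charge is nonnegative, and compare the circulation identity of Lemma \ref{defcirc} for $u$ with the Kelvin-conserved circulations $\gamma_{j,0}^{n}\to\gamma_{j,0}$ of the approximations; this gives exactly the paper's identity \eqref{grouch}, $\gamma_j=\gamma_{j,0}+\omb(\Gamma_j)$, with $\omb(\Gamma_j)\geq 0$. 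Your plan for part (b) also follows the paper's route (pass to the limit in the weak vorticity formulation of the smooth solutions against $\yinfb$ test functions, with the quadratic term handled by Lemma \ref{weakconvmeas}), and your observation that $H_\varphi$ vanishes when $x$ or $y$ lies on $\partial\Om$ is a legitimate variant of what Proposition \ref{propestK} provides. But there is a genuine gap at the decisive step.

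The gap is the \emph{tightness} hypothesis of Lemma \ref{weakconvmeas}, which you claim holds ``as in the proof of Proposition \ref{propident}.'' In that proof the measures are $(\chi_n\om)\ast\eta_n$ for a \emph{fixed} measure $\om\in\bm$, and tightness is immediate because $|\om^n|$ is dominated, near $\partial\Om$ and near infinity, by $|\om|$ of a slightly enlarged set. In Theorem \ref{thmsomething} the relevant measures are $\om^{n_p}(t)$, the vorticities of the exact Euler flows at positive times; they are not mollifications of any fixed measure and satisfy no such domination, so that argument does not transfer — indeed, uniform absence of mass of $\om^{n_p}(t)$ near $\partial\Om$ \emph{and near infinity} is precisely the nontrivial content of part (b). Note also that your Portmanteau-style use of $m_j\equiv 0$ (positivity of the concentrating part plus vanishing boundary charge of the limit) can at best control compact portions of $\partial\Om$; it says nothing about mass escaping to infinity. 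The paper closes this by converting the hypothesis into convergence of total masses: $\gamma_j\equiv\gamma_{j,0}$ and the conservation law \eqref{en-e} of Theorem \ref{en-e-thm} give $\om(t)(\Om)=\om_0(\Om)$, while conservation of vorticity mass for the smooth flows gives $\om^{n_p}(t)(\Om)=\om_0^{n_p}(\Om)\to\om_0(\Om)$; hence \eqref{convom} holds with spatial test function identically $1$, and subtracting the admissible test functions $\xi\chi_k$ yields \eqref{convom} for $\varphi=\xi(1-\chi_k)$; finally, splitting into the single-signed parts $\om^{n_p,\pm}$ (the negative one converging weakly in $L^1$) turns these limits into tightness of $|\om^{n_p}|$. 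Weak-$\ast$ convergence \emph{plus} convergence of masses of the single-signed parts is what yields tightness, and this chain — which is where the hypothesis does its real work — is absent from your proposal. A secondary instance of the same omission: you never treat the factors $\gamma_{j,0}^{n_p}+\int_\Om\w_j\,\om^{n_p}$ multiplying $\int_\Om\bX_j\cdot\nabla\varphi\,\om^{n_p}$; since $\w_j$ neither vanishes on $\partial\Om$ nor has bounded support, passing to the limit there also requires tightness and the no-concentration information, not merely the vanishing of $\bX_j\cdot\nabla\varphi$ on the boundary.
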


\begin{proof}

To prove \eqref{en-a}, we go back to the notation and proof of Theorem \ref{theodelort}. We have that $\om^n$ is bounded in $L^\infty\loc(\R_+;\bmp + L^1(\Om))$, so there exists some $\omb\in L^\infty\loc(\R_+;\bmb)$ and a subsequence $\om^{n_p}$ such that $\om^{n_p}\rightharpoonup\omb$ in $L^\infty\loc(\R_+;\bmb)$ weak$\ast$. We also recall that $\om^n = \om^{n,+} + \om^{n,-}$, with $\om^{n,+} \geq 0$ and $\om^{n,-} \leq 0$, such that $\om^{n,+} $ is bounded in $L^\infty\loc(\real_+,\bmpb)$ and, passing to subsequences if needed, such that $\om^{n_p,-} \rightharpoonup \om^-$ weak-$\ast$ in $L^\infty\loc(\real_+,L^1(\Om))$. Hence, passing to further subsequences if necessary, there exists $\omb^+ \in L^\infty\loc(\R_+;\bmpb)$ such that $\om^{n_p,+}\rightharpoonup\omb^+$ weak$\ast$ in $L^\infty\loc(\R_+;\bmpb)$. Thus $\omb - \om^- = \omb^+ \in L^\infty\loc(\real_+,\bmpb)$.

We can assume without loss of generality that the subsequence $n_p$ is the same as the one considered in the proof of Theorem \ref{theodelort}. Since the solution $u^n$ is smooth, the Kelvin circulation theorem holds true so the circulation of $u^n$ on $\Gamma_j$ is constant in time, denoted by $\gamma_{j,0}^n$. We get from relations \eqref{circK}, \eqref{defuzero} and \eqref{defuzerobis} that
\begin{equation*}
\gamma_{j,0}^n=\gamma_{j,0}+\int_\Om\w_j\d\om_0-\int_\Om\w_j \om_0^n.  
\end{equation*}
We clearly have that
\begin{equation}\label{convcirc}
\gamma_{j,0}^n\to   \gamma_{j,0}\quad\text{as }n\to\infty.
\end{equation}

Using \eqref{defcirceq} we have that
\begin{equation} \label{needthisforfinal1}
\int_0^\infty\int_\Om \varphi\om^{n_p}+ \int_0^\infty\int_\Om u^{n_p}\cdot\nabla^\perp\varphi=-\sum_{j=1}^k \gamma_{j,0}^n \int_0^\infty\varphi\bigl|_{\Gamma_j} 
\end{equation}
for all $\varphi\in C^\infty_c ([0,\infty);\yinf)$. Passing to the limit $p\to\infty$ above yields 
\begin{equation} \label{needthisforfinal2}
\int_0^\infty\int_{\overline\Om} \varphi\d\omb+ \int_0^\infty\int_\Om u\cdot\nabla^\perp\varphi=-\sum_{j=1}^k \gamma_{j,0} \int_0^\infty\varphi\bigl|_{\Gamma_j}. 
\end{equation}
Using again Lemma \ref{defcirc} we have that
\begin{equation*}
\int_0^\infty\int_{\Om} \varphi\d\om+ \int_0^\infty\int_\Om u\cdot\nabla^\perp\varphi=-\sum_{j=1}^k \int_0^\infty\gamma_j\varphi\bigl|_{\Gamma_j}. 
\end{equation*}
Subtracting the two previous relations we get that
\begin{equation*}
\sum_{j=1}^k \int_0^\infty\bigl[\gamma_j- \gamma_{j,0}-\omb(\Gamma_j)\bigr]\varphi\bigl|_{\Gamma_j}=0.
\end{equation*}
We conclude that
\begin{equation} \label{grouch}
\forall j \quad   \gamma_j= \gamma_{j,0}+\omb(\Gamma_j)\quad\text{a.e. in }t.
\end{equation}
Now, $\omb(\Gamma_j) = (\omb - \om^-)(\Gamma_j)$, since $\om^-(\Gamma_j) = 0$, by virtue of $\om^- \in L^\infty\loc (\real_+;L^1(\Om))$. Since $\omb - \om^- \geq 0$, part \eqref{en-a} follows.

It remains to show part \eqref{en-c}. Let $\om^n=\curl u^n$ be the approximate solution constructed in the proof of Theorem \ref{theodelort}. We use the Kelvin circulation theorem together with Theorem \ref{equivth} applied to the smooth solutions $\om^n$ to obtain that  $\om^n$ verifies the formulation
\begin{multline}\label{weakvortomn}
\int_0^\infty\int_\Om\partial_t\varphi\om^n + \sum_{j=1}^k\int_0^\infty \gamma_{j,0}^n \partial_t\varphi(t,\cdot)\bigl|_{\Gamma_j}\d t+\sum_{j=1}^k\int_0^\infty\bigl(\gamma_{j,0}^n+\int_\Omega\w _j\om^n\bigr)\int_\Omega  \bX_j \cdot\nabla\varphi \om^n\\
+\int_0^\infty\iint_{\Om\times\Om}\frac{1}{2}
\Big( \nabla_x  \varphi (x) \cdot K (x,y) + \nabla_y\varphi(y)\cdot K(y,x)\Big)\om^n(x)\om^n(y)\\
+\int_\Om\varphi(0,\cdot)\om^n _0+\sum_{j=1}^k\gamma_{j,0}^n\varphi(0,\cdot)\bigl|_{\Gamma_j}=0
\end{multline}
for all test functions $\varphi\in C^\infty_c ([0,\infty);\yinf)$. Given that $\om^n$ is compactly supported in $\Om$ it is easy to see that \eqref{weakvortomn} holds true with $\varphi\in C^\infty_c ([0,\infty);\yinfb)$. Indeed, let $\varphi\in C^\infty_c ([0,\infty);\yinfb)$ and define $\varphi^k=\varphi \chi_k$ where $\chi_k$ was defined on page \pageref{chin}. Then it is trivial to see that after we put $\varphi^k$ as test function in \eqref{weakvortomn} and send $k\to\infty$ we obtain the desired relation. More precisely, if $k$ is sufficiently large then due to the compact support of $\om^n$ all terms in \eqref{weakvortomn} except for the second and last ones does not change if we replace $\varphi$ with $\varphi^k$. And the sum of the second term and the last term is vanishing.

We use next that $u$, respectively $u^n$, has circulation $\gamma_{j,0}$, respectively $\gamma_{j,0}^n$, around $\Gamma_j$ for all $j$. Recalling Lemma \ref{defcirc} we obtain that
\begin{equation}\label{eqcirc}
\int_0^\infty\int_\Om\varphi \d\om+\int_0^\infty\int_\Om u\cdot \nabla^\perp \varphi  
=\int_0^\infty\int_\Om\varphi \om^n+\int_0^\infty\int_\Om u^n\cdot \nabla^\perp \varphi +\sum_{j=1}^k(\gamma_{j,0}^n-\gamma_{j,0})\varphi\bigl|_{\Gamma_j} 
\end{equation}
for all $\varphi\in C^\infty_c ([0,\infty);\yinf)$. 

We show now that \eqref{eqcirc} holds true for all $\varphi\in C^\infty_c ([0,\infty);\yinfb)$. We can assume that $\varphi$ vanishes on the boundary of $\Om$. Indeed, if this is not the case then, with the notations introduced in the proof of Lemma \ref{defcirc}, $\varphi-\sum_{j=1}^kL_j\varphi_j$ vanishes on  the boundary of $\Om$ and  belongs to $\varphi\in C^\infty_c ([0,\infty);\yinfb)$. Moreover, $\varphi_j$ can be used as test function in \eqref{eqcirc}. So let $\varphi\in C^\infty_c ([0,\infty);\yinfb)$ vanishing on the boundary of $\Om$. We construct as above $\varphi^k=\varphi \chi_k\in C^\infty_c ([0,\infty)\times\Om)$ and use it as test function in \eqref{eqcirc}. We clearly have by the dominated convergence theorem that  
\begin{equation*}
 \int_0^\infty\int_\Om\varphi^k\d\om \to \int_0^\infty\int_\Om\varphi \d\om
\quad\text{and}\quad 
\int_0^\infty\int_\Om\varphi^k\om^n \to \int_0^\infty\int_\Om\varphi \om^n
\qquad \text{as } k\to\infty.
\end{equation*}
Moreover, given that $\varphi$ vanishes on $\partial\Om$ we have that $\nabla\varphi^k=\nabla \varphi \chi_k+\varphi \nabla\chi_k$ is uniformly bounded with respect to $k$ and has support included in the same bounded set. Therefore, $\nabla \varphi^k\rightharpoonup\nabla\varphi$ weakly in $L^2([0,\infty)\times\Om)$ with  support included in the same bounded set. Given that $u,u^n\in L^2\loc([0,\infty)\times\overline\Om)$ we infer that
\begin{equation*}
 \int_0^\infty\int_\Om u\cdot \nabla^\perp \varphi^k  \to  \int_0^\infty\int_\Om u\cdot \nabla^\perp \varphi  
\quad\text{and}\quad 
\int_0^\infty\int_\Om u^n\cdot \nabla^\perp \varphi^k \to \int_0^\infty\int_\Om u^n\cdot \nabla^\perp \varphi 
\qquad \text{as } k\to\infty.
\end{equation*}
We conclude that \eqref{eqcirc} holds true with $\varphi\in C^\infty_c ([0,\infty);\yinfb)$.

Given that $u^{n_p}-u\rightharpoonup 0$ weak$\ast$ in $L^\infty\loc([0,\infty);L^2(\Om))$ we obtain from \eqref{convcirc} and \eqref{eqcirc} that
\begin{equation}\label{convom}
 \int_0^\infty\int_\Om\varphi \om^{n_p} \to \int_0^\infty\int_\Om\varphi \d\om
\qquad \text{as } p\to\infty
\end{equation}
for all $\varphi\in C^\infty_c ([0,\infty);\yinfb)$. Let us fix some $\varphi\in C^\infty_c ([0,\infty);\yinfb)$. Recall that such a $\varphi$ can be used as test function in \eqref{weakvortomn}. Next, we let $n\to\infty$ in \eqref{weakvortomn}. Given the above relation and recalling that $\om^n\rightharpoonup\om$ weak$\ast$ in $L^\infty(\R_+;\bmp + L^1(\Om))$ we observe that all terms in \eqref{weakvortomn} except for the term on the middle line converge to the expected limit. To pass to the limit in that term we use the same argument as in the proof of Proposition \ref{propident} (more precisely the proof of relation \eqref{convH}) except that now we also have a harmless time integral. The main point in the argument is that, by Proposition \ref{propestK}, the function $H_\varphi$ is bounded and continuous outside the diagonal for any $\varphi\in C^\infty_c ([0,\infty);\yinfb)$. In order to be able to apply Lemma \ref{weakconvmeas} and deduce that we can pass to the limit in  the term on the middle line of relation \eqref{weakvortomn}, we need to show two things. One is that the weak limit of $|\om^{n_p}|$ is a continuous measure. The other one is that the sequence $\om^n$ is tight. 


We already know from the proof of Theorem \ref{theodelort} that the weak limit of $|\om^{n_p}|$ is a continuous measure. Let us show now that the sequence $\om^n$ is tight. Since $\gamma_j(t)=\gamma_{j,0}$, we deduce from relation \eqref{en-e} that $\om^{n_p}(\Om)=\om(\Om)=\om_0(\Om)$. Therefore relation \eqref{convom} holds true with $\varphi=\xi$ (it is an equality in this case). Since $\xi \chi_k\in C^\infty_0([0,\infty);\yinf)$, it can be used as test function in  \eqref{convom}. We finally infer that relation \eqref{convom} holds true with $\varphi=\xi (1-\chi_k)$. Now, since $\om^{n,-}$ converges weak-$\ast$ to $\om^-$ in $L^\infty_{loc}(\R_+;L^1(\Om))$ we have that
\begin{equation*}
 \int_0^\infty\int_\Om\xi (1-\chi_k) \om^{n_p,-} \to \int_0^\infty\int_\Om\xi (1-\chi_k)\d\om^-
\qquad \text{as } p\to\infty.
\end{equation*}
Subtracting this from relation \eqref{convom}  with $\varphi=\xi (1-\chi_k)$ implies that 
\begin{equation*}
 \int_0^\infty\int_\Om\xi (1-\chi_k) \om^{n_p,+} \to \int_0^\infty\int_\Om\xi (1-\chi_k)\d\om^+
\qquad \text{as } p\to\infty.
\end{equation*}
Since $\om^{n_p,+}$ and $\om^{n_p,-}$ are single-signed, the two above relations imply the tightness of $\om^{n_p,+}$ and $\om^{n_p,-}$ as measures. We conclude that the sequence $|\om^{n_p}|$ must also be tight because it is bounded by $\om^{n_p,+}-\om^{n_p,-}$ which is tight. This completes the proof.
\end{proof}

Our final result is that, if $\om_0 \in L^1(\Om) \cap \H$, then any solution of the weak vorticity formulation constructed as in Theorem \ref{theodelort} has no circulation defect. Thus, for weak solutions constructed as in Theorem \ref{theodelort}, the phenomena of circulation defect may happen only at the level of regularity of vortex sheets.

\begin{theorem} \label{lastone}
Let $\om_0 \in L^1(\Om) \cap \H$ and consider real numbers $\gamma_{j,0}$, $j=1,\ldots,k$. Then there exists a weak solution of the Euler equations with initial data $\om_0$, $\gamma_{j,0}$ for which the circulation of the velocity around each connected component of the boundary is conserved. Consequently,
there exists a boundary-coupled weak solution with initial data $\om_0$ and initial circulations $\gamma_{j,0}$.
\end{theorem}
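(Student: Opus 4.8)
The plan is to re-examine the approximating sequence built in the proof of Theorem \ref{theodelort} and to show that, in the purely integrable case, the circulation defect quantified in Theorem \ref{thmsomething} vanishes because no vorticity can concentrate on the boundary. First I would construct a solution exactly as in Theorem \ref{theodelort}: since $\om_0\in L^1(\Om)\cap\H\subset(\bmp+L^1(\Om))\cap\H$, that theorem applies and yields, along the subsequence $n_p$ fixed there, a weak solution $u$ whose vorticity $\om=\curl u$ is a weak-$\ast$ limit of the transported mollified vorticities. I would retain the notation $\om^n=\om^{n,+}+\om^{n,-}$ and $\omb$ from the proofs of Theorems \ref{theodelort} and \ref{thmsomething}, where $\omb\in L^\infty\loc(\R_+;\bmb)$ is the weak-$\ast$ limit of $\om^{n_p}$ taken up to the boundary.

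The key new input is that, when $\om_0\in L^1(\Om)$, \emph{both} signed parts of the approximating sequence are uniformly integrable, uniformly in time. Writing $\om_0=\om_0^++\om_0^-$ with $\om_0^\pm\in L^1(\Om)$, the mollifications $\om_0^{n,\pm}=\eta_n\ast(\chi_n\om_0^\pm)$ converge strongly in $L^1(\Om)$, so each family $\{\om_0^{n,\pm}\}_n$ is uniformly integrable by the Dunford--Pettis theorem. Since the flow map $\Phi^n$ is measure preserving, $\om^{n,\pm}(t,\cdot)$ is a rearrangement of $\om_0^{n,\pm}$, and uniform integrability, depending only on the distribution function, is preserved. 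Because $|\om^n|\le\om^{n,+}-\om^{n,-}$ is dominated by a uniformly integrable family, the whole sequence $\{\,|\om^n(t,\cdot)|\,\}_{n,t}$ is uniformly integrable, uniformly in $t$. (This is the step genuinely missing in Theorem \ref{theodelort}, where only the negative part was integrable.)

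From this I would deduce that $\omb$ charges no boundary mass. By uniform integrability, for every $\ep>0$ there is $\rho>0$ with $\sup_n\int_{U_\rho}|\om^n|\le\ep$, where $U_\rho=\{x\in\overline\Om:d(x,\partial\Om)<\rho\}$ is an open neighborhood of $\partial\Om$; here I use that the compact smooth boundary gives $|U_\rho|\to0$ as $\rho\to0$. Passing to the weak-$\ast$ limit and invoking lower semicontinuity of the total variation on open sets yields $|\omb|(\partial\Om)\le|\omb|(U_\rho)\le\ep$ for all $\ep>0$, whence $|\omb|(\partial\Om)=0$ and in particular $\omb(\Gamma_j)=0$ for every $j$. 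The identity \eqref{grouch}, established in the proof of Theorem \ref{thmsomething}, then gives $\gamma_j(t)=\gamma_{j,0}+\omb(\Gamma_j)=\gamma_{j,0}$ for a.e. $t$, which is precisely conservation of circulation around each boundary component, proving the first assertion.

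The second assertion is then immediate: the solution just constructed satisfies $\gamma_j(t)\equiv\gamma_{j,0}$ a.e., so part \eqref{en-c} of Theorem \ref{thmsomething} applies and shows it is a boundary-coupled weak solution. I expect the only delicate point to be the boundary-concentration estimate of the third paragraph, namely making precise that uniform integrability rules out weak-$\ast$ limit mass on the Lebesgue-null set $\partial\Om$; this is handled by the shrinking-neighborhood estimate together with semicontinuity of total variation, after which everything reduces to assembling results already proved.
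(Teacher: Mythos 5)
Your proposal is correct, and its skeleton coincides with the paper's: run the construction of Theorem \ref{theodelort}, use that the transported vorticity is a rearrangement of the mollified initial vorticity to get uniform integrability uniformly in time, deduce that the up-to-the-boundary limit $\omb$ charges no mass on the $\Gamma_j$, and conclude via the identity \eqref{grouch} and part \eqref{en-c} of Theorem \ref{thmsomething}. The one step where you genuinely diverge is the mechanism for killing the boundary mass. The paper invokes Dunford--Pettis: uniform integrability gives weak $L^1$ compactness, so, along a further subsequence, $\om^{n_p}\rightharpoonup\om$ weak-$\ast$ in $L^\infty\loc(\R_+;L^1(\Om))$, whence $\omb=\om$ is for a.e.\ $t$ an $L^1$ function on $\overline\Om$ and cannot charge the Lebesgue-null sets $\Gamma_j$. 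You instead keep the measure-valued limit and prove a quantitative non-concentration estimate: uniform integrability together with $|U_\rho|\to0$ gives $\sup_{n,t}\int_{U_\rho}|\om^n(t,\cdot)|\leq\ep$, and lower semicontinuity of the total variation on relatively open sets transfers this to $|\omb(t)|(\partial\Om)=0$ for a.e.\ $t$. Both arguments hinge on exactly the same uniform integrability input; the paper's identification $\omb=\om$ is shorter and yields more (the limit has no singular part anywhere, not merely on $\partial\Om$), while your version is more localized---it would survive with uniform integrability only near the boundary---at the price of the time-integration bookkeeping needed to make the lower-semicontinuity step rigorous for weak-$\ast$ convergence in $L^\infty\loc(\R_+;\bmb)$ (fix $\psi$ continuous, compactly supported in $U_\rho$ with $\|\psi\|_{L^\infty}\leq1$, test against nonnegative $\eta\in C_c([0,\infty))$, then take a countable supremum over $\psi$). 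Two side remarks: your splitting into $\om^{n,\pm}$ is harmless but not needed here, since the full $\om^n(t,\cdot)$ is itself a rearrangement of $\eta_n\ast(\chi_n\om_0)$, which converges strongly in $L^1(\Om)$ when $\om_0\in L^1(\Om)$; and you correctly identify that the new ingredient relative to Theorem \ref{theodelort} is the uniform integrability of the positive part, which is exactly what the sign hypothesis there failed to provide.
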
 

\begin{proof}
Let $\om$, $\gamma_j$ be constructed as in Theorem \ref{theodelort}. Recall the notation and proofs from Theorem \ref{theodelort}. We note that, since $\om_0 \in L^1(\Om)$, it follows that $\om_{0,n} \to \om_0$ weakly in $L^1(\Om)$, thus $\{\om_0^n\}$ is uniformly integrable. Now, $\om^n$ is a rearrangement of $\om_0^n$ and, therefore, it is also uniformly integrable, uniformly in time. We obtain that, passing to a subsequence $n_p$ as needed, $\om^{n_p} \to \om$ weak$\ast$ in $L^\infty\loc (\real_+;L^1(\Om))$. Therefore,
using the notation introduced in the proof of Theorem \ref{thmsomething}, we find that 
$\omb = \om$ in this case.

As in the proof of Theorem \ref{thmsomething}, we use the identities \eqref{needthisforfinal1} and \eqref{needthisforfinal2}, together with the fact that $\omb(\Gamma_j) = 0$, to conclude that
\begin{equation*}
\gamma_j (t) = \gamma_{j,0},
\end{equation*}   
for almost all $t>0$.

It follows from Theorem \ref{thmsomething}, part \eqref{en-c}, that $\omega$ is a boundary-coupled weak solution.

\end{proof}

\begin{remark}
It follows from the proof of Theorem \ref{lastone} that any weak solution constructed using the procedure of Theorem \ref{theodelort} (\textit{i.e.} any weak limit of the sequence of approximations) with initial vorticity
in $L^1(\Om)\cap\H$ satisfies the conclusions of Theorem \ref{lastone}. 
\end{remark}

\section{Net force on boundary components}

For smooth fluid flow, the net force exerted by the fluid on an immersed solid object with boundary $\Gamma$ is given by
\[\int_{\Gamma} p \hat{n} \,dS,\]
where $p$ is the scalar pressure and $\hat{n}$ is the unit exterior normal to the fluid at $\Gamma$. It is natural to ask whether it is
possible to make sense of this net force at the level of regularity of vortex sheets. This is the subject of the present section.

Clearly, vortex sheet flows are not regular enough to allow for traces of pressure at the boundary. To circumvent this, we will use the PDE itself. Let $u$ be a smooth solution of the incompressible 2D Euler equations in the exterior domain $\Omega$, whose boundary is $\partial\Omega = \cup_{i=1}^{k} \Gamma_i$. Let $\chi_i$ be a smooth cut-off function of a neighborhood of the $i$-th boundary component, $\Gamma_i$, so that $\chi_i \equiv 1$ in this neighborhood and $\chi_i$ vanishes in a neighborhood of all the remaining $\Gamma_j$, $j \neq i$. We introduce the following two smooth vector fields:
\begin{equation} \label{Phii1}
\Phi_i^1 = \Phi_i^1(x) = -\nabla^{\perp}(x_2\chi_i(x)),
\end{equation}
and 
\begin{equation} \label{Phii2}
\Phi_i^2 = \Phi_i^2(x) = \nabla^{\perp}(x_1\chi_i(x)).
\end{equation}
Next we note that, to compute the horizontal component of the net force exerted on the fluid by the $i$-th hole, 
\[\int_{\Gamma_i} p\hat{n}^1 \, dS, \]
we can take the inner product of the Euler equations with $\Phi_i^1$ and integrate on $\Omega$:
\[\int_{\Omega} [\partial_t u + (u\cdot \nabla )u] \cdot \Phi_i^1 \, dx = - \int_{\Omega} \nabla p \cdot \Phi_i^1 \, dx.\]
We then observe that, since $\Phi_i^1$ is divergence free, we have
\[- \int_{\Omega} \nabla p \cdot \Phi_i^1 \, dx = - \int_{\partial \Omega} p \Phi_i^1 \cdot \hat{n} \, dS = -\int_{\Gamma_i} p \hat{n}^1 \, dS.\]

Similarly, to compute the vertical component of the net force  exerted on the fluid by the $i$-th hole, 
\[\int_{\Gamma_i} p\hat{n}^2 \, dS, \]
it is enough to take the inner product of the Euler equations with $\Phi_i^2$ and integrate on $\Omega$. 

Note that, if we substitute $\Phi_i^j$ for $\Phi_i^j + \Phi$, where $\Phi$ is any smooth, divergence free vector field  tangent to $\partial \Omega$, then the calculations above do not change.

With this in mind, let $u \in L^{\infty}\loc([0,+\infty);L^2\loc(\overline{\Omega}))\cap C^0([0,\infty);\mathscr{D}'(\Om))$ and consider the functional
\[ \Phi \mapsto F_u(\Phi) = F_u(\Phi)(t) \equiv \partial_t \int_{\Omega} u \cdot \Phi \, dx - \int_{\Omega} [(u \cdot \nabla) \Phi] \cdot u \, dx,\]
where $\Phi \in \bcs$. Here $\bcs$ denotes the space of smooth divergence free vector fields, tangent to the boundary with compact support at infinity. We easily have that, for each test vector field $\Phi$, $F_u(\Phi) \in W^{-1,\infty}(0,+\infty)$. 
In light of our observations in the smooth setting, we wish to have $F_u(\Phi)$ depending on $\Phi$ only through its normal component at $\partial \Om$.
In other words, since $F_u(\Phi)$ is linear with respect to $\Phi$, we expect to have $F_u(\Phi)$ vanishing whenever $\Phi\in\bcs$. 
As we will see, this brings us naturally to the notion of boundary-coupled weak solution. We have defined boundary-coupled weak solution using the vorticity formulation of the Euler equations. However, in order to discuss the net force we are interested in, we require a definition of boundary-coupled weak solution arising from the velocity formulation. Such a definition was introduced by one of the authors in \cite{franckspaper}, in the setting of flow in the exterior of a single rigid body. We wish to extend this definition to flow in a domain exterior to $k$ holes, such as our $\Om$. 

One natural way to strengthen the notion of weak solution in Definition \ref{eulerweaksol} is to allow test vector fields $\phi$ who are merely tangent to $\partial\Om$, instead of compactly supported in $\Om$, i.e., substitute divergence free $\phi \in C^{\infty}_c([0,\infty) \times \Om)$ for $\phi \in C^{\infty}_c([0,\infty);\bcs)$. We will use such test vector fields to write an equivalent definition of boundary-coupled weak solution for the velocity formulation.

\begin{theorem} \label{equivdefbdrycoupled}

Let $\omega_0 \in \bm \cap \H$ and fix real numbers $\gamma_{j,0}$, $j=1,\ldots, k$. Let $\omega \in L^{\infty}\loc([0,\infty);\bm\cap\H)$, together with $\gamma_j \in L^\infty\loc([0,\infty))$, $j=1,\ldots,k$, be a boundary-coupled weak solution of the Euler equations with initial vorticity $\omega_0$ and initial circulations $\gamma_{j,0}$. Then $u=u(t)$ given by Proposition \ref{uomega} in terms of $\omega$ and $\gamma_j$ is a weak solution of the incompressible 2D Euler equations for which the integral identity in part (a) of Definition \ref{eulerweaksol} holds for divergence free test vector fields $\phi \in C^{\infty}_c([0,\infty);\bcs )$. 

Conversely, assume that $u \in L^{\infty}\loc([0,\infty);\L )\cap C^0([0,\infty);\mathscr{D}'(\Om))$ is a weak solution which satisfies the identity in Definition \ref{eulerweaksol}, item (a), for all divergence free $\phi \in C^{\infty}_c([0,\infty);\bcs )$. Suppose, additionally, that $\omega = \curl u \in L^{\infty}\loc([0,\infty);\bm)$. Let $\gamma_j$ denote the circulation of $u$ around $\Gamma_j$. Then $(\omega,\gamma_1,\ldots,\gamma_k)$ is a boundary-coupled weak solution of the 2D incompressible Euler equations. 

\end{theorem}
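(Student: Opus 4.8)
The plan is to transcribe, almost verbatim, the proof of Theorem \ref{equivth}, the sole new point being that every ingredient used there extends to the enlarged test-function classes. The pivot is the correspondence $\varphi \mapsto \nabla^\perp\varphi = \phi$, which I claim is a bijection from $\yinfb$ onto $\bcs$, exactly parallel to the bijection $\yinf \to C^\infty_{c,\sigma}(\Om)$ exploited in the proof of Theorem \ref{equivth}. Indeed, if $\varphi\in\yinfb$ then $\nabla^\perp\varphi$ is smooth, divergence free, of bounded support, and tangent to $\partial\Om$, the latter because $\varphi$ is constant along each $\Gamma_j$; hence $\nabla^\perp\varphi\in\bcs$. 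Conversely, given $\phi\in\bcs$, its flux through every closed curve vanishes (around each $\Gamma_j$ because $\phi\cdot\hat n=0$ there, and through large circles because $\phi$ has bounded support), so $\phi$ admits a single-valued smooth stream function $\varphi$ with $\nabla^\perp\varphi=\phi$; this $\varphi$ is locally constant, hence constant, outside a large ball, and after subtracting that constant it has bounded support and is constant on each $\Gamma_j$, i.e. $\varphi\in\yinfb$. Applied pointwise in $t$, this lifts to a bijection between $C^\infty_c([0,\infty);\yinfb)$ and $C^\infty_c([0,\infty);\bcs)$.

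The crucial observation is that the three identities driving the proof of Theorem \ref{equivth} are all available in the broader class $\yinfb$. The transport identity of Proposition \ref{propident}, and hence its time-integrated form \eqref{ident3}, is already stated for $\varphi\in\yinfb$. The two circulation identities \eqref{ident1} and \eqref{ident2} are instances of the defining relation \eqref{defcirceq} applied to $u_0$ and to $u(t)$; since $u_0,u(t)\in\L\subset L^1_{loc}(\overline\Om)$, Remark \ref{defcircremark} guarantees that \eqref{defcirceq}, and therefore \eqref{ident1}--\eqref{ident2}, remain valid for every test function in $\yinfb$. Thus no new analytic estimate is required beyond what was established in Sections 4 and 6.

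With these tools both directions are immediate. For $\phi=\nabla^\perp\varphi\in C^\infty_c([0,\infty);\bcs)$ the integral identity of Definition \ref{eulerweaksol}(a) is precisely \eqref{weakvel}, and combining \eqref{ident1}, \eqref{ident2} and \eqref{ident3} shows, exactly as in the proof of Theorem \ref{equivth}, that \eqref{weakvel} for such $\phi$ is equivalent to \eqref{weakvort} tested against the corresponding $\varphi\in C^\infty_c([0,\infty);\yinfb)$. In the forward direction a boundary-coupled weak solution satisfies \eqref{weakvort} for all $\varphi\in C^\infty_c([0,\infty);\yinfb)$; restricting to $\yinf\subset\yinfb$ and invoking Theorem \ref{equivth} shows that $u$ is a weak solution, while the equivalence just noted upgrades the velocity identity to the class $\bcs$. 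In the converse direction the hypothesis provides the velocity identity for all $\phi\in C^\infty_c([0,\infty);\bcs)$, which through the bijection yields \eqref{weakvort} for all $\varphi\in C^\infty_c([0,\infty);\yinfb)$, i.e. $(\omega,\gamma_1,\ldots,\gamma_k)$ is a boundary-coupled weak solution. The only genuinely new step, and hence the one place demanding care, is the verification that the stream-function correspondence is a clean bijection between the enlarged spaces $\yinfb$ and $\bcs$; everything else is a transcription of Theorem \ref{equivth} with $\yinf$ replaced throughout by $\yinfb$.
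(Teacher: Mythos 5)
Your proposal is correct and takes essentially the same route as the paper: the paper also reduces the statement to the proof of Theorem \ref{equivth}, citing exactly your three ingredients --- the one-to-one correspondence $\varphi\mapsto\nabla^\perp\varphi$ between $\yinfb$ and $\bcs$, the extension of Lemma \ref{defcirc} to $\yinfb$ test functions given in Remark \ref{defcircremark}, and the fact that Proposition \ref{propident} is already stated for $\yinfb$. The only difference is that you spell out the verification that the stream-function correspondence is a bijection, a point the paper asserts without proof.
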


This is the analogue, for boundary-coupled weak solutions, of Theorem \ref{equivth}. The proof, which we omit, is essentially identical to that of Theorem \ref{equivth}. Indeed, the three main ingredients of that proof go through in this case. One main ingredient is that the map $\varphi\mapsto \nabla^\perp\varphi$ gives a one to one correspondence from $\yinfb$ into $\bcs$. The second ingredient is Lemma \ref{defcirc}; this goes through for test functions in $\yinfb$ as was observed in Remark \ref{defcircremark}. And the third main ingredient is  the identity proved in Proposition \ref{propident}, already stated for test functions in $\yinfb$.

We henceforth refer to $u$ as being a boundary-coupled weak solution (of the velocity formulation of the incompressible 
2D Euler equations) if the set of test vector fields allowed in item (a) of Definition \ref{eulerweaksol} includes all divergence free $\phi \in C^{\infty}_c([0,\infty);\bcs)$. 

Now, given a boundary-coupled weak solution $u$ it can be readily verified that the functional we defined above, $F_u(\Phi)$, vanishes when $\Phi \in \bcs$. Indeed, taking $\eta=\eta(t)\in C^{\infty}_c([0,\infty))$ and setting $\phi = \eta \Phi$, with $\Phi \in \bcs$, in the integral identity in Definition \ref{eulerweaksol}, (a), we get
\[\int\eta'(t)\int_{\Om}u\cdot\Phi \, dxdt + \int\eta(t) \int_{\Om} [(u \cdot \nabla )\Phi ]\cdot u \, dxdt + \int_{\Om} \eta(0)\Phi\cdot u_0\,dx = 0.\]
Now, 
\[ \int\eta'(t)\int_{\Om}u\cdot\Phi \, dxdt + \int_{\Om} \eta(0)\Phi\cdot u_0\,dx = - \int \eta\partial_t\int_{\Om} u \cdot \Phi \, dx dt,\]
where the right-hand-side derivative is interpreted in the sense of distributions. It follows that $F_u(\Phi) = 0 $ in $W^{-1,\infty}((0,\infty))$.

For each $i=1,\ldots,k$ set 
\[\mathbf{f_i} = (f_i^1,f_i^2) \equiv -(F_u(\Phi_i^1),F_u(\Phi_i^2)),\]
where $\Phi_i^1$ was defined in \eqref{Phii1} and $\Phi_i^2$ was given in \eqref{Phii2}.  
In view of our discussion it is reasonable to call $\mathbf{f_i}$ the {\it net force which the $i$-th hole exerts on the fluid}. We have established that, for smooth flow, this coincides with the classical calculation of this net force.

We say that the net forces $\mathbf{f_i}$ are well-defined if $F_u(\Phi)=0$ for all $\Phi \in \bcs$.

We have proved, with the discussion above together with Theorem \ref{equivdefbdrycoupled}, the following result.

\begin{corollary} \label{cornetforce}
Let $\omega_0 \in \bm \cap \H$ and fix real numbers $\gamma_{i,0}$, $i=1,\ldots,k$. Let $(\omega,\gamma_1,\ldots,\gamma_k)$ be a solution of the weak vorticity formulation with initial data $(\omega_0,\gamma_{1,0},\ldots,\gamma_{k,0})$. Let $u$ be given by Proposition \ref{uomega} in terms of $\omega$ and $\gamma_i$. Then the net forces $\{\mathbf{f_i}\}$, $i=1,\ldots,k$, are well-defined if and only if $(\omega,\gamma_1,\ldots,\gamma_k)$ is a boundary-coupled weak solution. 

If $(\omega,\gamma_1,\ldots,\gamma_k)$ is a solution constructed as in Theorem \ref{theodelort} for which $\gamma_i(t) = \gamma_{i,0}$, $i=1,\ldots,k$, $t>0$, then the net forces $\mathbf{f_i}$ are well-defined. In particular, when $\omega_0 \in L^1(\Omega)\cap \H$ there exists a weak solution with well-defined forces.
\end{corollary}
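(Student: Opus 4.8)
The plan is to establish the equivalence first and then read off the two existence statements from the results of Sections~7 and~8. For the equivalence I chain Theorem~\ref{equivdefbdrycoupled} together with the computation carried out immediately before the statement. Recall that Theorem~\ref{equivdefbdrycoupled} identifies the boundary-coupled solutions $(\om,\gamma_1,\dots,\gamma_k)$ of the weak vorticity formulation with exactly those weak solutions $u$ for which the integral identity of Definition~\ref{eulerweaksol}(a) continues to hold for all divergence free test fields $\phi\in C^\infty_c([0,\infty);\bcs)$. Hence it suffices to prove that the net forces are well defined, i.e.\ $F_u(\Phi)=0$ in $W^{-1,\infty}((0,\infty))$ for every $\Phi\in\bcs$, if and only if $u$ satisfies this extended integral identity. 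Throughout, $u$ is the field reconstructed from $(\om,\gamma_j)$ by Proposition~\ref{uomega}, which by the converse part of Theorem~\ref{equivth} is a genuine weak solution with initial datum $u_0=u(0)$.

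One implication is already contained in the discussion preceding the corollary: inserting $\phi=\eta(t)\Phi(x)$ with $\eta\in C^\infty_c([0,\infty))$ and $\Phi\in\bcs$ into the extended identity and integrating by parts in time shows that the identity equals $-\langle F_u(\Phi),\eta\rangle$, so the extended identity forces $F_u(\Phi)=0$. For the converse, fix $\Phi\in\bcs$ and set $g(t)=\int_\Om u(t)\cdot\Phi\,\d x$ and $h(t)=\int_\Om[(u\cdot\nabla)\Phi]\cdot u\,\d x$; the vanishing of $F_u(\Phi)$ says precisely that $g'=h$ in $\mathscr{D}'((0,\infty))$. Because $u$ is a weak solution, it is continuous into weak $L^2\loc(\overline\Om)$ with $u(0)=u_0$, so $g$ is continuous and $g(0)=\int_\Om u_0\cdot\Phi\,\d x$ (here we use that $\Phi$ has bounded support). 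Integrating $g'=h$ against an arbitrary $\eta\in C^\infty_c([0,\infty))$ and keeping the boundary term at $t=0$ then yields exactly that the extended identity holds for $\phi=\eta\Phi$.

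It remains to pass from the separated test fields $\eta\Phi$ to a general $\phi\in C^\infty_c([0,\infty);\bcs)$. For this I use that the algebraic tensor product $C^\infty_c([0,\infty))\otimes\bcs$ is dense in $C^\infty_c([0,\infty);\bcs)$ for the topology controlling $\phi$, $\partial_t\phi$, $\nabla\phi$ and the trace $\phi(0,\cdot)$ with uniformly bounded supports, together with the fact that the left-hand side of the integral identity is continuous for this topology: $u\in L^\infty\loc([0,\infty);L^2\loc(\overline\Om))$ pairs with $\partial_t\phi$ and with $\phi(0,\cdot)$, while $u\otimes u\in L^\infty\loc([0,\infty);L^1\loc(\overline\Om))$ pairs with the uniformly bounded $\nabla\phi$. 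Hence the extended identity holds for every tangent test field, and Theorem~\ref{equivdefbdrycoupled} makes $(\om,\gamma_1,\dots,\gamma_k)$ boundary coupled. This proves the equivalence.

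The last two assertions are then immediate. If $(\om,\gamma_1,\dots,\gamma_k)$ is the Delort-type solution of Theorem~\ref{theodelort} with conserved circulations $\gamma_i(t)=\gamma_{i,0}$, part~\eqref{en-c} of Theorem~\ref{thmsomething} shows it is boundary coupled, so by the equivalence just proved the forces $\mathbf{f}_i$ are well defined. Finally, when $\om_0\in L^1(\Om)\cap\H$, Theorem~\ref{lastone} furnishes a weak solution built as in Theorem~\ref{theodelort} whose circulations are conserved, and the preceding sentence makes its forces well defined. The main obstacle is the converse half of the equivalence: upgrading the single-test-field statement $F_u(\Phi)=0$ for time-independent $\Phi$ to the full space-time identity, which rests on the tensor-product density and, more delicately, on correctly recovering the initial-data term $g(0)=\int_\Om u_0\cdot\Phi\,\d x$ from the weak-solution property; the forward implication and the two existence statements follow at once from the earlier results.
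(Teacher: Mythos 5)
Your proposal is correct and follows essentially the same route as the paper: the forward implication is the paper's own computation with separated test fields $\eta(t)\Phi(x)$, the equivalence with the weak vorticity formulation is closed through Theorem \ref{equivdefbdrycoupled}, and the two existence assertions are read off from part (b) of Theorem \ref{thmsomething} and from Theorem \ref{lastone}, exactly as in the text. The paper treats the converse implication (forces well-defined $\Rightarrow$ boundary-coupled) as immediate, so your elaboration of it --- recovering the initial trace $g(0)=\int_\Om u_0\cdot\Phi\,\d x$ (which in this setting follows from testing with fields compactly supported in $\Om$, Lemma \ref{defcirc} and the uniqueness statement of Proposition \ref{unicdec}) together with the tensor-product density step --- is a legitimate filling-in of details the authors leave implicit, not a departure from their argument.
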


We conclude this section by observing that, as with the net forces on $\Gamma_i$, we can discuss the torque which the fluid exerts on each $\Gamma_i$. In the smooth setting this corresponds to 
\[\int_{\Gamma_i} p (x-\overline{x}_i)^{\perp}\cdot \hat{n} \, dS,\]
where 
\[\overline{x}_i = \frac{\int_{\Omega_i}   x\,dx}{\int_{\Omega_i}  dx}\]
is the centroid of the $i$-th hole $\Omega_i$.

For each $i=1,\ldots,k$ set
\[\Psi_i=\Psi_i(x)=\nabla^{\perp}\left(\frac{1}{2}|x-\overline{x}|^2\chi_i(x)\right).\]
At the level of regularity of vortex sheets the torque $\mathbf{\tau_i}$ across each boundary component $\Gamma_i$ corresponds to $-F_u(\Psi_i)$. The conclusions for the torques, analogous to those in Corollary \ref{cornetforce} for the net forces, hold true.

\section{Final remarks and conclusion}

The main point of the present article was to develop the vortex dynamics formulation of the incompressible 2D Euler equations in domains with boundary
with vortex sheet initial data. We chose to formulate our theory in exterior domains for fortuitous reasons -- this happened to be the problem we started
investigating, -- but most of what we have done can be easily extended for flows in the interior of a bounded region with smooth boundary. Indeed, let $\Omega$ now denote a connected, bounded domain in $\real^2$ with $\partial \Omega$ consisting of $k+1$ disjoint smooth curves, $\Gamma_0$ the outer boundary, and $\Gamma_j$, $j=1,\ldots,k$ the inner boundaries. Existence of a weak solution with vortex sheet initial data was already proved by Delort in \cite{Delort91}. There has been no weak vorticity formulation in this context. We denote by $\gamma_0$ the circulation around $\Gamma_0$, taken in the direction $\hat{n}_0^{\perp}$, where $\hat{n}_0$ is the exterior unit normal to $\Gamma_0$.

The appropriate adaptation of Definition \ref{weakvortformul} is:  

\begin{definition}
Let $\om_0 \in \bm \cap H^{-1}(\Omega)$ and consider real numbers $\gamma_{j,0}$, $j=0,\ldots,k$. We say that the ($k+2$)-tuple $(\om, \gamma_0,\gamma_1,\ldots,\gamma_k)$, is a solution of the weak vorticity formulation of the incompressible 2D Euler equations in $\Om$, with initial vorticity $\om_0$ and initial circulations $\gamma_{j,0}$, if  $\om\in L^\infty\loc([0,\infty);\bm \cap \H)$, if  $\gamma_j \in L^{\infty}_{\loc}([0,\infty))$, $j=0,\ldots,k$,  and if the  identity
\begin{multline*}
\int_0^\infty\int_\Om\partial_t\varphi\d \om+ \sum_{j=1}^k\int_0^\infty \gamma_j(t) \partial_t\varphi(t,\cdot)\bigl|_{\Gamma_j}\d t - 
\int_0^{\infty} \gamma_0 \partial_t\varphi \bigl|_{\Gamma_0} \d t + 
\sum_{j=1}^k\int_0^\infty\bigl(\gamma_j+\int_\Omega\w _j\d\omega\bigr)\int_\Omega  \bX_j \cdot\nabla\varphi \d\omega \\
+\int_0^\infty\iint_{\Om\times\Om}\frac{1}{2}
\Big( \nabla_x  \varphi (x) \cdot K (x,y) + \nabla_y\varphi(y)\cdot K(y,x)\Big)\d\omega(x)\d\omega(y)\\
+\int_\Om\varphi(0,\cdot)\d \om_0+\sum_{j=1}^k\gamma_j(0)\varphi(0,\cdot)\bigl|_{\Gamma_j} 
-\gamma_0(0)\varphi(0,\cdot)\bigl|_{\Gamma_0}  = 0
\end{multline*}
with $\gamma_j(0)=\gamma_{j,0}$, holds true for every test function $\varphi\in C^\infty_c([0,\infty);\yinf)$.
\end{definition}
 
The equivalence between this definition and the standard weak velocity formulation is a straightforward adaptation of Theorem \ref{equivth}; in fact, the proof is simpler because now the domain is bounded.
 Indeed, in a bounded domain, Propositions \ref{propK}, \ref{propestK}, \ref{propdefK} and \ref{milton} are valid as stated. As for Lemma \ref{defcirc}, the identity \eqref{defcirceq} must be substituted for
\begin{equation}\label{defcirceqbdd}
\int_\Omega\varphi\d \om + \int_\Omega v\cdot\nabla^\perp\varphi=-\sum_{j=1}^k\gamma_j\;\varphi\bigl|_{\Gamma _j} + \gamma_0\;\varphi\bigl|_{\Gamma _0}.
\end{equation}
For bounded domains we have $\H = H^{-1}(\Omega)$ and, also, Proposition \ref{unicdec} is trivial. The definition of $K[\omega]$ for a general $\omega \in H^{-1}(\Omega)$  
follows directly from elliptic regularity on a bounded domain. In addition, Proposition \ref{aproxom} remains valid, with the obvious adaptation in the definition of the 
cut-off function $\chi_n$, Propositions \ref{uomega} and \ref{charH-1norm} remain valid as stated, and Theorem \ref{theodelort} was already proved by Delort in \cite{Delort91}. 
The proof of the equivalence between the weak velocity and weak vorticity formulations in bounded domains, adapting Theorem \ref{equivth}, follows from what we have just discussed.  

Next, we note that the conservation law described in Theorem \ref{en-e-thm} for exterior domain flows has an analogous statement for bounded domain flows, namely, that 
\begin{equation} \label{en-e-bdd}
\int_\Om 1\d\om+\sum_{j=1}^k\gamma_j(t) -\gamma_0(t) =\int_\Om1\d\om_0+\sum_{j=1}^k\gamma_{j,0} - \gamma_{0,0}.
\end{equation}

However, in a bounded domain the function $\varphi = 1$ belongs to $\yinf$, so that taking such $\varphi$ in \eqref{defcirceqbdd} leads to each side of \eqref{en-e-bdd} vanishing, which trivializes this conservation law.

We conclude the discussion on flows in a bounded domain by considering flows obtained by smoothing out initial data. The first item in the statement of  Theorem \ref{thmsomething} must be substituted, in a bounded domain, for 
\begin{equation*} 
\gamma_j(t) \geq \gamma_{j,0}, \; j = 1,\ldots, k, \mbox{ and } \gamma_0(t) \leq \gamma_{0,0}
\mbox{ for almost every } t\geq 0.
\end{equation*}
As for item (b) of Theorem \ref{thmsomething}, it holds as stated, for bounded domain flows, as long as we include $j=0$ in the statement.
Of course, the statement of Theorem \ref{lastone} remains valid for bounded domain flows as well, and the analysis of the definition of the
net force on boundary components extends to this context without change.

All results in this article have been about the interaction of 2D vortex sheet flows with {\it compact} boundary components. It is natural to
investigate the relation between boundary circulation and vorticity in the presence of non-compact boundaries. The simplest such situation  
is flow in the half-plane. In \cite{LNX01}, the authors introduced a weak vorticity formulation for vortex sheet flows in the half-plane $\mathbb{H}$ and, also, the notion of boundary-coupled weak solution. In that same paper, existence of a boundary-coupled weak solution was established assuming the initial vorticity was a nonnegative measure in $H^{-1}(\mathbb{H})$. The corresponding existence result remains open for flows in domains with compact boundaries, basically due to not being able to exclude concentration of vorticity at each boundary component. In the present work, we proved that weak solutions obtained by mollifying initial data, for which the circulation along boundary components is conserved, are, in fact, boundary-coupled. For half-plane flows the circulation along the boundary is naturally defined as the integral of vorticity in the bulk of the fluid. Conservation of circulation in half-plane flows is, therefore, equivalent to conservation of mass of vorticity. We do not know whether mass of vorticity is conserved for vortex sheet flows in the half-plane, but we do not expect this to be the case. This raises the question of whether the converse to Theorem \ref{thmsomething} holds in general, i.e., if there exist boundary-coupled weak solutions which do not conserve circulation along boundary components.

The physically relevant solutions of the incompressible Euler equations are those obtained from the vanishing viscosity limit of solutions of the 
Navier-Stokes system. It is an important open problem whether solutions of the Navier-Stokes equations with a fixed initial condition, on a domain
with boundary, satisfying the no-slip boundary conditions, converge to a weak solution of the incompressible Euler equations in the vanishing viscosity    limit. See \cite{BT13} for the state of the art concerning this problem, and its connection with turbulence modelling. Weak solutions which are limits
of vanishing viscosity, if such solutions exist, are expected to exchange vorticity with the boundary, see the discussion in \cite{BT13}, and have vortex
sheet regularity, see \cite{LMNT08}. This means that the weak vorticity formulation in domains with boundary, as developed here, provides an appropriate 
context to seek vanishing viscosity limits. Weak solutions obtained by mollifying initial data and solutions obtained as vanishing viscosity limits ought to behave differently with respect to their interaction with boundaries. To illustrate this, we observe that, according to the proof of Theorem \ref{thmsomething},
weak solutions must satisfy identity \eqref{grouch}. In contrast, if we consider the approximate solution sequence $\{\omega^{\nu}\}$ given by (9.1)-(9.4)
in \cite{LMNT08}, the limit velocity is time-independent, given by $u_0$, but, by (9.49) and (9.54) in the same reference, $\omb(\{|x|=1\}) = \alpha(t-)$, which, if it does not vanish, implies that \eqref{grouch} is not satisfied in this limit. For initial vorticity in $L^1$, we have obtained weak solutions which conserve circulation around boundary components and for the half-plane and similar domains, existence of a weak solution in \cite{LNX01,LNX06} follows from the construction of an approximate solution sequence which does not concentrate vorticity at the boundary. In both cases, we obtain boundary-coupled weak solutions. Vanishing viscosity limits, on the other hand might not, maybe should not be boundary-coupled, which, in light of the discussion in Section 8, makes the discussion of solid-fluid interaction rather delicate.   

The system formed by the incompressible 2D Euler equations 
in the exterior of a compact rigid body together with the equations for the motion of the rigid body
under the fluid force was studied in \cite{GS12,franckspaper}. The existence of a weak solution for the coupled system was proved in two cases: initial vorticity in $L^p$, $p>1$ and symmetric body with symmetric vortex sheet data with a sign condition. These are two situations where boundary coupled weak solutions are known to exist, by the present work in the first case and by \cite{LNX06} in the second. 
Our work suggests two natural extensions of the results in \cite{GS12,franckspaper}- to the limit case $p=1$ and to motion with more bodies. 
The case of motion of a rigid body coupled with a general Delort solution is physically very interesting, but our analysis in Section 8 highlights the difficulty in defining the coupling, and makes this case a more challenging open problem.    

Another natural avenue for investigation related to the present work is to adapt those results, proved in this article for approximate solution sequences obtained by mollifying initial data, to other approximation schemes, such as numerical approximations, approximation by Euler-$\alpha$ solutions, and vanishing viscosity on Navier-Stokes solutions with Lions' free boundary or other, more general, Navier boundary conditions.
   
\section*{Appendix}

In this appendix we establish uniform estimates for the Green's function and for the Biot-Savart kernel of a general, smooth, bounded domain in the plane. Such estimates can be found in classical textbooks, see, for instance, Theorem 4.17 in \cite{Aubin82}, but the constants depend on the distance to the boundary. Here we show they are uniformly bounded in the whole domain.

\begin{proposition} \label{appGreenAndBS}
Let $U \subset \real^2$ be a smooth, bounded domain. Let $G_U=G_U(x,y)$ be the Green's function for the Dirichlet Laplacian in $U$ and set $K_U=K_U (x,y) = \nabla^{\perp}_x G_U(x,y)$. Then there exists $M = M(U)>0$ such that
\begin{equation} \label{appGreen}
|G_U(x,y)| \leq M(1+|\log|x-y||),
\end{equation} 
and 
\begin{equation} \label{appBS}
|K_U(x,y)| \leq \frac{M}{|x-y|},
\end{equation} 
for all $(x,y) \in U \times U$, $x \neq y$.
\end{proposition}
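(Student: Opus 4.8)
The plan is to peel off the logarithmic singularity and reduce both estimates to quantitative control of the regular part. For fixed $y\in U$ write $G_U(x,y)=\frac{1}{2\pi}\log|x-y|-\gamma(x,y)$, where the corrector $\gamma(\cdot,y)$ is harmonic in all of $U$ and satisfies $\gamma(z,y)=\frac{1}{2\pi}\log|z-y|$ for $z\in\partial U$. Since $|K_U|=|\nabla_x G_U|$ and the explicit singular part already has gradient of modulus exactly $\frac{1}{2\pi|x-y|}$, estimate \eqref{appBS} is equivalent to $|\nabla_x\gamma(x,y)|\le C/|x-y|$, i.e. to the statement that, at the scale $r:=|x-y|$, the harmonic remainder of $G_U$ has gradient of size $O(1/r)$; estimate \eqref{appGreen} is equivalent to a logarithmically compatible $L^\infty$ bound on $\gamma$.

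I would first dispose of \eqref{appGreen} by the maximum principle. Let $D=\mathrm{diam}\,U$. On $\partial U$ we have $\frac{1}{2\pi}\log|z-y|\le\frac{1}{2\pi}\log D$, so $\gamma(x,y)\le\frac{1}{2\pi}\log D$ throughout $U$. Moreover $G_U(\cdot,y)$ vanishes on $\partial U$, is harmonic on $U\setminus\{y\}$, and tends to $-\infty$ at $y$, hence $G_U\le 0$ by the maximum principle applied on $U\setminus B(y,\varepsilon)$. Combining the two gives $0\le -G_U(x,y)=\gamma(x,y)-\frac{1}{2\pi}\log|x-y|\le\frac{1}{2\pi}\log\frac{D}{|x-y|}$, which yields \eqref{appGreen}.

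The substance is the gradient bound \eqref{appBS}, and my strategy is a rescaling argument at the scale $r=|x-y|$, using the smoothness of $\partial U$ to make every estimate uniform up to the boundary. Writing $d(x)=\mathrm{dist}(x,\partial U)$, I would split according to whether $x$ lies deep inside ($d(x)\ge r/2$) or near the boundary ($d(x)<r/2$). In the first case $B(x,r/2)\subset U$ avoids $y$, so $G_U(\cdot,y)$ is harmonic there and the interior gradient estimate for harmonic functions gives $|\nabla_x G_U(x,y)|\le\frac{C}{r}\,\mathrm{osc}_{B(x,r/2)}G_U(\cdot,y)$. In the second case let $p\in\partial U$ be nearest to $x$; I would work in $B(p,r/2)\cap U$, which again avoids $y$, rescale by $1/r$, and use that $\partial U$ is smooth — with uniform interior and exterior ball radii — so that the rescaled boundary is uniformly $C^2$ and nearly flat at unit scale. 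Since $G_U(\cdot,y)$ vanishes on $\partial U$, the boundary gradient estimate for harmonic functions vanishing on a uniformly $C^2$ boundary portion (proved, e.g., by flattening the boundary and reflecting oddly) then bounds the rescaled gradient by the rescaled $L^\infty$ norm. In both cases the output is $|\nabla_x G_U(x,y)|\le\frac{C}{r}\,\Xi$, where $\Xi$ is the oscillation of $G_U(\cdot,y)$ over $B(x,r/2)$ in the interior case and the supremum of $|G_U(\cdot,y)|$ over $B(p,r/2)\cap U$ in the boundary case.

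The hard part — and the only place a naive argument breaks down — is to show $\Xi=O(1)$, so that no spurious factor $\log(1/r)$ survives the rescaling: the crude bound \eqref{appGreen} only gives $\Xi\le C\log(1/r)$. What is really needed is the sharp near-diagonal information that $-G_U(x,y)$ behaves like $\log\bigl(1+\frac{d(x)d(y)}{|x-y|^2}\bigr)$; in the boundary ball every relevant point has $d(\cdot)\le Cr$ and $d(y)\le Cr$, so the argument of this logarithm is bounded and $\Xi=O(1)$, while in the interior ball the singular part accounts for the whole of $\mathrm{osc}\,G_U$ up to an $O(1)$ error coming from the regular part. I would obtain this near-boundary control by the maximum principle, comparing $-G_U(\cdot,y)$ with explicit harmonic barriers in $B(p,\rho)\cap U$ that vanish on the boundary, using the symmetry $G_U(x,y)=G_U(y,x)$ to upgrade a one-sided factor $d(x)/|x-y|$ into the symmetric factor $d(x)d(y)/|x-y|^2$. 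The main obstacle throughout is keeping every constant independent of the positions of $x$ and $y$ relative to $\partial U$; this uniformity is precisely what the scale invariance of the rescaled estimates, together with the uniform geometry of the smooth compact boundary, provides.
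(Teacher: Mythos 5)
Your route is genuinely different from the paper's: the paper proves both estimates by reducing to the unit disk, where $G_D$ is explicit, via conformal maps (Riemann mapping plus smooth extension to the boundary), handles exterior pieces by inversion, and then treats multiply connected domains by a maximum-principle comparison of Green's functions, a cut-off argument, and induction on the number of holes. Within your route, the maximum-principle proof of \eqref{appGreen} is complete and correct (and shorter than the paper's treatment of that half), and the rescaling skeleton for \eqref{appBS} is sound: interior gradient estimate when $d(x)\geq r/2$, boundary gradient estimate at unit scale after rescaling when $d(x)<r/2$, with constants uniform because the rescaled boundary is uniformly $C^2$. (One bookkeeping repair: with your radii, $y$ can lie arbitrarily close to $\partial B(p,r/2)$, so $\sup_{B(p,r/2)\cap U}|G_U(\cdot,y)|$ need not even be finite; split instead at $d(x)<r/4$ and estimate the gradient on $B(p,r/4)$ using data on $B(p,r/2)$, which keeps $y$ at distance $\geq r/4$ from the comparison region.)

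The genuine gap is in the step you yourself flag as the hard one, namely $\Xi=O(1)$, i.e.\ the sharp bound $-G_U(x,y)\leq C\log\bigl(1+d(x)d(y)/|x-y|^2\bigr)$, and the mechanism you propose for it --- a single barrier comparison giving a one-sided factor $d(x)/|x-y|$, upgraded by symmetry --- cannot deliver it. Any one-scale comparison argument must be fed an a priori bound on $\sup(-G_U(\cdot,y))$ over the cap $U\cap\partial B(p,r/2)$, and at that stage the only available bound is the crude \eqref{appGreen}, i.e.\ $C\log(D/r)$ with $D=\mathrm{diam}\,U$; the comparison then yields $-G_U(x,y)\leq C\,\frac{d(x)}{r}\,\log(D/r)$, and symmetrizing yields at best $-G_U(x,y)\leq C\,\frac{d(x)d(y)}{r^2}\,\log(D/r)$. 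But in exactly the regime where you need the conclusion --- the boundary case $d(x)<r/2$, which automatically forces $d(y)<3r/2$ --- the factors $d(x)/r$ and $d(y)/r$ are already $O(1)$, so they cannot cancel the logarithm; you end with $\Xi=O(\log(1/r))$ and hence only $|K_U(x,y)|\leq C\log(1/r)/r$, short of \eqref{appBS}. (The same circularity afflicts the one-sided bound itself: it is not obtainable from a single barrier, because proving it already requires the log-free sup bound.) What is missing is an ingredient that beats the logarithm across scales: for instance Widman's dyadic iteration, in which the harmonic measure, seen from $x$, of the boundary annuli $\partial U\cap\bigl(B(p_y,2^{k+1}r)\setminus B(p_y,2^k r)\bigr)$ decays geometrically like $\theta^k$ while the boundary data $\frac{1}{2\pi}\log|z-y|$ grows only linearly in $k$, so the resulting series converges; or a boundary Harnack principle; or the paper's own device of transporting the explicit disk formula $G_D(x,y)=\frac{1}{2\pi}\log\frac{|x-y|}{|x-y^{\ast}||y|}$ by a conformal map that is bi-Lipschitz up to the boundary. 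Without one of these, the proposal does not close.
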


\begin{remark}
Estimate \eqref{appBS} is due to L. Lichtenstein, see \cite{L1918}. For convenience sake, we include a proof.

\end{remark}

\begin{proof}
The proof proceeds in several steps. 

First we establish the result when the domain $U$ is the unit disk $D=B(0;1)$. We have, for the disk,
\[G_D(x,y) = \frac{1}{2\pi}\log\frac{|x-y|}{|x-y^{\ast}||y|},\]
where $y^{\ast}=\ds{\frac{y}{|y|^2}}$. 
Now, it is easy to verify that 
\[\lim_{y\to 0} |x-y^{\ast}||y| = 1,\]
and this limit is uniform with respect to $x \in D$. Hence, there exists $0<r_0\leq 1$ such that $1/2 < |x-y^{\ast}||y| < 2$ for all $y\in D$ such that $|y|<r_0$ and all $x\in D$. Therefore, if $(x,y) \in D\times D$, $|y|<r_0$, then \eqref{appGreen} follows immediately. If, on the other hand, $(x,y) \in D\times D$ and $|y|\geq r_0$ then $|x - y^{\ast}| \leq 1 + 1/r_0$, so that
\[\left|\log\frac{|x-y|}{|x-y^{\ast}||y|}\right| \leq \log\frac{1+1/r_0}{|x-y|} - \log r_0 \leq M(1+|\log|x-y||).\]
This establishes \eqref{appGreen} if $U$ is the unit disk. 

Next we analyze $K_D$: we have
\[K_D(x,y) = \frac{1}{2\pi}\frac{(x-y)^{\perp}}{|x-y|^2} - \frac{1}{2\pi}\frac{(x-y^{\ast})^{\perp}}{|x-y^{\ast}|^2}.\]
Together with the fact that $|x-y|\leq |x-y^{\ast}|$, this trivially yields \eqref{appBS}.

The second step consists of extending these estimates to a general smooth, bounded, simply connected domain $D_0$. This can be done through a conformal map $T: D_0 \to D$. The existence of the biholomorphism $T$ follows from the Riemann mapping theorem; that it can be extended smoothly up to the boundary was established in \cite{BK1987}. Furthermore, since both $T$ and $T^{-1}$ are smooth diffeomorphisms and $\overline{D_0}$ is a compact set with smooth boundary, it follows that $T$ and $T^{-1}$ are globally Lipschitz and, therefore, there exist $C_1>0$ and $C_2>0$ such that
\[C_1|x-y|\leq|T(x) - T(y)| \leq C_2|x-y|,\]
for all $(x,y) \in D_0 \times D_0$. With this map in place we note that
\[G_{D_0}(x,y) = G_D(T(x),T(y)),\]
whence we obtain \eqref{appGreen}. Also,
\[K_{D_0}(x,y) = K_D(T(x),T(y))DT(x).\]
This yields, analogously, \eqref{appBS}, since $DT$ is uniformly bounded, which is a consequence of $T$ extending smoothly up to the boundary. 

The third step consists of examining the case of a(n unbounded) domain which is exterior to a single, connected, bounded, smooth domain. We begin with the domain exterior to the unit disk, $\Pi = \real^2 \setminus D$. In this case the Green's function is given by
\[G_{\Pi}(x,y)= G_D(x^\ast,y^\ast),\]
with $x^\ast=x/|x|^2$ and $y^\ast=y/|y|^2$, as before. A straightforward calculation yields
\[|x^\ast-y^\ast| = \frac{|x-y|}{|x||y|}\]
so that, since $|x|$, $|y| \geq 1$, it follows from \eqref{appGreen} for the unit disk that, for any $R>0$,
\[|G_{\Pi}(x,y)| \leq M(1+ |\log|x-y||),\]
for all $x$, $y \in \Pi$, $|x|,$ $|y| \leq R$, which is a local version of \eqref{appGreen}. Furthermore, the Biot-Savart kernel $K_{\Pi}$ can be computed from $G_{\Pi}$ and we find
\[|K_{\Pi}(x,y)| \leq \frac{C}{|x|^2}\frac{1}{|x^\ast-y^\ast|} = \frac{C|y|}{|x||x-y|}.\]
Hence we can also deduce a version of \eqref{appBS}, valid in bounded subsets of $\Pi$.

Let, now, $\Pi_0$ denote a general unbounded exterior domain, which is exterior to a single, connected, bounded, smooth obstacle. Using a biholomorphism $T:\Pi_0\to \real^2\setminus\overline{D}$, with smooth extension to the boundary, as was done in \cite{ILN03}, we obtain local versions of \eqref{appGreen} and \eqref{appBS} for this kind of domain from the estimates in the domain exterior to the unit disk.

The fourth step consists of analyzing the case of a smooth, bounded, connected domain $D_1$ in the plane with one obstacle, i.e., let $D_0$ be a bounded, simply connected smooth domain and let $\mathcal{V}_1 \subset D_0$ be another simply connected smooth domain. Set $D_1 = D_0 \setminus \mathcal{V}_1$ and denote $\Gamma_1 \equiv \partial\mathcal{V}_1$ and $\Gamma_0 \equiv \partial D_0$, so that $\partial D_1 = \Gamma_0 \cup \Gamma_1$, $\Gamma_0 \cap \Gamma_1 = \emptyset$. Let us begin by establishing \eqref{appGreen} in $D_1$. For each $y\in D_1$ fixed, consider the function $\psi = \psi(x)=G_{D_1}(x,y)-G_{D_0}(x,y)$. Recall that, by the maximum principle, both $G_{D_1}$ and $G_{D_0}$ are nonpositive everywhere. 
We have:
\[
\left\{
\begin{array}{l}
\Delta_x \psi = 0, x \in D_1,\\
\psi = 0, \;\;\; x \in \Gamma_0,\\
\psi = -G_{D_0} \geq 0, \;\;\;x \in \Gamma_1.
\end{array}
\right.
\]
Therefore, by the maximum principle we find $0 \leq -G_{D_1}(x,y) \leq -G_{D_0}(x,y)$, for all $(x,y) \in D_1 \times D_1$. Together with the estimate for $G_{D_0}$, this yields the desired result for $G_{D_1}$. 

We cannot use the maximum principle in such a simple manner for the components $K^j_{D_1}$, $j=1,2$, because we have no boundary information for either of $K_{D_0}$ or $K_{D_1}$ on any of the two boundaries. So, to establish \eqref{appBS} for $K_{D_1}$ we split the problem in three cases: (i) $y$ in the interior of $D_1$, far from either boundary, (ii) $y$ near the outer boundary $\Gamma_0$ and (iii) $y$ near the inner boundary $\Gamma_1$. In the first case, (i), we obtain \eqref{appBS} from Theorem 4.17 of \cite{Aubin82}, where this estimate is deduced with a constant depending on the distance to the boundary. Next we assume that $y$ is close to the outer boundary $\Gamma_0$, as  in (ii). Let $r>0$ and suppose that $y \in B(z_0;r/2)\cap D_1$ for some $z_0 \in \Gamma_0$. Assume that $r$ is sufficiently small so that $\partial B(z_0;2r)\cap D_1$ is a connected set. Let $\phi=\phi(x)$ be a smooth cut-off function for $B(z_0;r)\cap D_1$, i.e., $\phi \geq 0$, $\phi \equiv 1$ inside $B(z_0;r)\cap D_1$, $\phi \equiv 0$ outside $B(z_0;2r)\cap D_1$, $\phi \in C^{\infty}(D_1)$. Set $\varphi = \phi(x)G_{D_1}(x,y)$ and extend $\varphi$ to $D_0$ by setting it to vanish inside the domain bounded by $\Gamma_1$. We then have:
\[\Delta_x \varphi = \phi(x)\Delta G_{D_1}(x,y) + 2 \nabla \phi(x)\nabla G_{D_1}(x,y) + \Delta\phi (x)G_{D_1}(x,y) \]
\[=\phi(y)\delta_y (x) + 2 \nabla \phi(x)\nabla G_{D_1}(x,y) + \Delta\phi (x)G_{D_1}(x,y) .\]
Observe that $\phi(y) = 1$ and notice that the last two terms above are supported in $B(z_0;2r)\setminus B(z_0;r)$ and, hence, are smooth functions. Let $\psi = \varphi(x) - G_{D_0}(x,y).$ We have:
\[
\left\{
\begin{array}{l}
\Delta_x \psi = 2 \nabla \phi(x)\nabla G_{D_1}(x,y) + \Delta\phi (x)G_{D_1}(x,y) \equiv f_1, \;\;\;x \in D_0,\\
\psi = 0, \;\;\; x \in \Gamma_0.
\end{array}
\right.
\]
We now use the representation formula for $\psi $ in terms of $f_1$ to show that $\psi$ and its derivatives are bounded, uniformly with respect to $x$. We have:
\[\psi(x)= \int_{D_0} f_1(z)G_{D_0}(x,z)\,dz,\]
so that
\[\nabla^{\perp}_x\psi(x) = \int_{D_0} f_1(z)K_{D_0}(x,z)\,dz.\]
Now, $f_1$ is a bounded function and $K_{D_0}$ satisfies \eqref{appBS}, so it is easy to show that $\nabla^{\perp}_x\psi$ is bounded in $D_0$. Since $\psi = \phi(x) G_{D_1}(x,y) - G_{D_0}(x,y)$ and $\phi \equiv 1$ in $B(z_0;r)\cap D_1$, we can compute $K_{D_1}$ in terms of $\nabla^{\perp}_x\psi$ for $x$ near the boundary $\Gamma_0$:
\[K_{D_1}(x,y) = \nabla^{\perp}_x \psi(x) + K_{D_0}(x,y),\;\;\;\mbox{ for } x \in B(z_0;r)\cap D_1.\]
If $y \in B(z_0;r/2)\cap D_1$ and $x \in D_1$, $x \notin B(z_0;r)$ then $K_{D_1}(x,y)$ is trivially bounded, since $G_{D_1}(x,y)$ is smooth away from the diagonal $\{x=y\}$. Hence, we conclude that $|K_{D_1}(x,y)|\leq C(1 + 1/|x-y|) \leq C/|x-y|$ for all $x \in D_1$ and $y \in B(z_0;r/2)\cap D_1$, $z_0 \in \Gamma_0$. The analysis of case (iii) proceeds in a similar fashion, except that we must use the exterior domain $\Pi_0 \equiv \real^2 \setminus \mathcal{V}_1$ in place of $D_0$ everywhere in the argument above. We note that, despite the fact that estimates \eqref{appGreen} and \eqref{appBS} were only shown to hold in bounded subsets of an exterior domain, this is enough to estimate $K_{D_1}$ for $x\in D_1$ and $y$ near the inner boundary $\Gamma_1$, as the new auxiliary function $f_1$ will be compactly supported and as $x \in D_1$, $y$ near $\Gamma_1$, remain bounded. To conclude the proof of \eqref{appBS} in $D_1$ we argue by compactness of the boundaries $\Gamma_0$ and $\Gamma_1$ to obtain a constant $M=M(D_1)>0$ which is uniform near the boundary, i.e., $y \in B(z_0;r/2)\cap D_1$ and $x \in B(z_0;r)\cap D_1$.  This, together with Theorem 4.17 in \cite{Aubin82}, yields the desired estimate in $D_1$. 

The fifth and final step in the proof is to analyze the case of a general bounded, smooth, domain with $N$ holes. The proof proceeds by induction with respect to the number of holes by repeating the argument presented in the fourth step, when adding one hole at a time. 

This concludes the proof.
\end{proof}

\textbf{Acknowledgments.}  D. Iftimie, M.C. Lopes Filho, H.J. Nussenzveig Lopes and F. Sueur thank the Franco-Brazilian Network in Mathematics (RFBM) for its financial support. M.C. Lopes Filho acknowledges the support of CNPq grant \# 303089/2010-5. H.J. Nussenzveig Lopes thanks the support of CNPq grant \# 306331/2010-1 and FAPERJ grant \# 103.197/2012. This work was partially supported by FAPESP grant \# 07/51490-7, by the CNPq-FAPERJ PRONEX in PDE, by the CNRS-FAPESP project \# 22076 and by the PICS \# 05925 of the CNRS. D. Iftimie and F. Sueur thank UNICAMP for its generous hospitality, while M.C. Lopes Filho and H. J. Nussenzveig Lopes thank the Universit\'e de Lyon, where part of this work was completed. H.J. Nussenzveig Lopes also thanks the Universit\'e de Paris VI for its kind hospitality. Finally, the authors wish to acknowledge helpful discussions with J.-M. Delort, P. G\'erard and J. Kelliher.

\def\cprime{$'$} \def\cprime{$'$} \def\cydot{{\l}eavevmode\raise.4ex\hbox{.}}
  \def\cprime{$'$} \def\cprime{$'$}
  \def\polhk\#1{\setbox0=\hbox{\#1}{{\o}oalign{\hidewidth
  {\l}ower1.5ex\hbox{`}\hidewidth\crcr\unhbox0}}}

\bigskip

\begin{description}
\item[Dragoş Iftimie] Université de Lyon, CNRS, Université Lyon 1, Institut Camille Jordan, 43 bd. du 11 novembre, Villeurbanne Cedex F-69622, France.\\
Email: \texttt{iftimie@math.univ-lyon1.fr}\\
Web page: \texttt{http://math.univ-lyon1.fr/\~{}iftimie}
\item[Milton C. Lopes Filho] Instituto de Matem\'atica,
Universidade Federal do Rio de Janeiro,
Cidade Universit\'aria -- Ilha do Fund\~ao, 
Caixa Postal 68530, 
21941-909 Rio de Janeiro, RJ -- Brasil. \\
Email: \texttt{mlopes@im.ufrj.br} \\
Web page: \texttt{http://www.im.ufrj.br/mlopes}
\item[Helena J. Nussenzveig Lopes] Instituto de Matem\'atica,
Universidade Federal do Rio de Janeiro,
Cidade Universit\'aria -- Ilha do Fund\~ao, 
Caixa Postal 68530, 
21941-909 Rio de Janeiro, RJ -- Brasil. \\
Email: \texttt{hlopes@im.ufrj.br}\\
Web page: \texttt{http://www.im.ufrj.br/hlopes}
\item[Franck Sueur] UPMC Univ Paris 06, CNRS, UMR 7598, Laboratoire Jacques-Louis Lions, F-75005, Paris, France\\
Email: \texttt{fsueur@ann.jussieu.fr}\\
Web page: \texttt{http://www.ann.jussieu.fr/\~{}fsueur/}
\end{description}


\end{document}